\documentclass[12pt]{article}
\usepackage{latexsym, amsmath, amsfonts, amsthm, amssymb}
\usepackage{mathtools}
\usepackage{a4wide}
\usepackage{times}
\usepackage{graphicx, tocloft}
\usepackage{mathrsfs}
\usepackage{bbm}
\usepackage{hyperref}
\usepackage{geometry}

\pdfstringdefDisableCommands{%
  \def\eqref#1{Eq.~(\ref{#1})}%
  \def\textsubscript#1{_{\kern0.04em#1}}%
  \def\textsuperscript#1{^{#1}}%
}
\usepackage{xcolor}

\geometry{	a4paper, left=20mm, top=20mm, bottom=20mm, right=20mm, }

\def \qqquad {\qquad \qquad }
\def \RR {\mathbb R}
\def \NN {\mathbb N}
\def \EE {\mathbb E}

\def \PP {\mathbb P}

\def \one {\mathbbm 1}

\def \cD {\mathcal D}

\def \cX {\mathcal X}

\def \ind {\mathbbm{1}}

\def \cE {\mathcal E}
\def \cF {\mathcal F}

\def \cA {\mathcal A}
\def \cS {\mathcal S}
\def \cB {\mathcal B}

\def \cD {\mathcal D}

\def \cR {\mathcal R}

\def \cP {\mathcal P}
\def \cN {\mathcal N}

\def \cC {\mathcal C}

\newcommand\numberthis{\addtocounter{equation}{1}\tag{\theequation}}

\newcommand{\Var}{\mathrm{Var}}
\newcommand{\Cov}{\mathrm{Cov}}
\newcommand{\Ent}{\mathrm{Ent}}
\newcommand{\DKL}{\mathrm{D_{KL}}}

\newcommand{\normgamma}[1]{\lVert #1 \rVert_{L^2(\gamma)}}
\newcommand{\normmu}[1]{\lVert #1 \rVert_{L^2(\mu)}}
\DeclareMathOperator*{\argmin}{\arg\!\min}

\DeclarePairedDelimiter\abs{\lvert}{\rvert}%
\DeclarePairedDelimiter\norm{\lVert}{\rVert}%

\newtheorem{theorem}{Theorem}[section]

\newtheorem{lemma}[theorem]{Lemma}

\newtheorem{proposition}[theorem]{Proposition}
\newtheorem{corollary}[theorem]{Corollary}

\newtheorem*{corollary*}{Corollary}

\theoremstyle{definition}
\newtheorem{defi}[theorem]{Definition}
\newtheorem{remark}[theorem]{Remark}



\def\qed{\hfill $\vcenter{\hrule height .3mm
		\hbox {\vrule width .3mm height 2.1mm \kern 2mm \vrule width .3mm
			height 2.1mm} \hrule height .3mm}$ \bigskip}

\def \id {{\rm Id}}

\def \id {{ \rm Id}}

\def \cov {{ \rm Cov}\,}

\title{Entropy and Learning of Lipschitz Functions under Log-Concave Measures}
\author{Pierre Bizeul\textsuperscript{1}
and Boaz Klartag\textsuperscript{2}}

\date{}

\begin{document}

\maketitle

\footnotetext[1]{Department of Mathematics, Weizmann Institute of Science, Rehovot 7610001, Israel. Email: \href{mailto:pierre.bizeul@weizmann.ac.il}{pierre.bizeul@weizmann.ac.il}. Partially supported by the European Research Council (ERC) under the European Union’s Horizon 2020 research and innovation programme, grant agreement No. 101001677 “ISOPERIMETRY”.}

\footnotetext[2]{Department of Mathematics, Weizmann Institute of Science, Rehovot 7610001, Israel. Email: \href{mailto:boaz.klartag@weizmann.ac.il}{boaz.klartag@weizmann.ac.il}. Supported by a grant from the Israel Science Foundation (ISF).}

\begin{abstract}
We study regression of $1$-Lipschitz functions under a
log-concave measure $\mu$ on $\RR^d$. We focus on the high-dimensional regime where the sample size $n$ is subexponential in $d$, in which distribution-free estimators are ineffective. We analyze two polynomial-based procedures: the projection estimator, which relies on knowledge of an orthogonal polynomial basis of $\mu$, and the least-squares estimator over low-degree polynomials, which requires no knowledge of $\mu$ whatsoever. Their risk is governed by the rate of polynomial approximation of Lipschitz functions in $L^2(\mu)$. When this rate matches the Gaussian one, we show that both estimators achieve minimax bounds over a wide range of parameters. A key ingredient is sharp entropy estimates for the class of $1$-Lipschitz functions in $L^2(\mu)$, which are new even in the Gaussian setting.
\end{abstract}

\section{Introduction}

In this paper, we study the following regression problem. Given an unknown $1$-Lipschitz function $f: \RR^d \rightarrow \RR$, we observe data
\[
\left((X_1, Y_1), \dots, (X_n, Y_n)\right),
\]
where:
\begin{itemize}
\item The vectors $X_1,\ldots,X_n \in \RR^d$ are independent random vectors that are distributed according to some Borel probability measure $\mu$ on $\RR^d$ that may or may not be known to us.
\item The numbers $Y_1,\ldots,Y_n \in \RR$ are noisy observations of the function $f$ evaluated at $X_i$, that is,
\begin{equation}\label{eq138}
    Y_i = f(X_i) + \xi_i, \qquad i = 1, \ldots, n,
\end{equation}
where, throughout the paper, $\xi_1,\ldots,\xi_n$ are independent, real-valued Gaussian random variables of mean zero and variance $\sigma^2$, for some parameter $\sigma > 0$.
\end{itemize}

Our goal is to construct an estimator $\hat{f}: \RR^d \rightarrow \RR$ of the function $f$, whose performance is measured by the $L^2(\mu)$-risk, defined via
\begin{equation}
    \cR(\hat{f},f) := \EE \norm{f - \hat{f}}^2_{L^2(\mu)}.
\end{equation}

 There are various types
of probability measures $\mu$ for which our analysis applies. We first consider a relatively simple case:

\subsection{The Gaussian case}
\label{sec11}

Consider first the case where $\mu = \gamma = \gamma_d$, the standard Gaussian measure on $\RR^d$. A well-known fact (recalled below) is that any $1$-Lipschitz function
$f: \RR^d \rightarrow \RR$ can be approximated by polynomials in Gaussian space. Namely, for any $m \geq 1$, there exists a polynomial $P_m: \RR^d \rightarrow \RR$ of total degree at most $m$ such that
\begin{equation}\label{eq146}
    \normgamma{f - P_m}^2 \leq \frac{1}{m+1}.
\end{equation}
Here and throughout the paper, the degree of a multivariate polynomial refers to its total degree. More precisely, for a multi-index $\alpha = (\alpha_1,\ldots,\alpha_d) \in \NN^d$ and the corresponding monomial
\[
P(x) = \prod_{i=1}^d x_i^{\alpha_i} \qquad \qquad \qquad x = (x_1,\ldots,x_d) \in \RR^d,
\]
we define
\[
\deg(P) := \sum_{i=1}^d \alpha_i =: |\alpha|.
\]
Here $\NN = \{0,1,2,\ldots \}$ stands for the set of all non-negative integers.
The degree of a multivariate polynomial is the maximum of the degrees of its monomials.
Note that the polynomial $P_m$ in \eqref{eq146} is simply the orthogonal projection of $f$ onto the finite-dimensional space of polynomials on $\RR^d$ of degree at most $m$, denoted by $\cP_{d,m}$. In particular, denoting by
\[
(H_\alpha)_{\alpha \in \NN^d}
\]
the Hermite basis of orthogonal polynomials for $\gamma$, one can write
\begin{equation}
P_m = \sum_{\alpha \in \NN^d, | \alpha | \leq m} \langle f, H_\alpha \rangle_{L^2(\gamma)} \, H_\alpha.
\label{eq_454}
\end{equation}
Our goal is to construct an estimator for the function $f$. Thanks to the polynomial approximation property  \eqref{eq146},
a natural idea is to estimate the polynomial $P_m \in \cP_{d,m}$, for a suitable choice of degree $m$ depending on $n, d$ and $\sigma$. This reduces
the nonparametric problem \eqref{eq138} to a parametric one.
In view of (\ref{eq_454}), for a well-chosen $m$, one may construct an estimator $\hat{f}$ by empirically estimating the coefficients
\[
f_\alpha := \langle f, H_\alpha \rangle_{L^2(\gamma)}.
\]
Namely, we define
\begin{equation}
    \hat{f} := \sum_{\alpha \in \NN^d, | \alpha | \leq m} \hat{f}_\alpha \, H_\alpha,
\end{equation}
where the coefficients $(\hat{f}_\alpha)_{| \alpha | \leq m}$ are defined as follows:
\begin{itemize}
\item First, for $\alpha = 0$, we estimate the Gaussian integral of $f$ (its ``barycenter'')
$$ a := f_0 = \int_{\RR^d} f d \gamma $$
via
\begin{equation}
    \hat a := \hat{f}_0 = \frac{1}{n}\sum_{i=1}^n Y_i = \frac{1}{n}\sum_{i=1}^n f(X_i) + \frac{1}{n}\sum_{i=1}^n \xi_i.
\end{equation}
Clearly $\hat a$ is an unbiased estimator of $a$.
\item Next, for any $\alpha \in \NN^d$ with $| \alpha | > 0$ we define
\begin{align}
    \hat{f}_\alpha &= \frac{1}{n}\sum_{i=1}^n (Y_i-\hat{a}) H_\alpha(X_i)\label{eq166}\\
    &= \frac{1}{n}\sum_{i=1}^n (f(X_i)-a) H_\alpha(X_i) + \frac{1}{n}\sum_{i=1}^n \xi_i H_\alpha(X_i) + \frac{1}{n}\sum_{i=1}^n (\hat a -a)H_\alpha(X_i),
\end{align}
which is a biased estimator of $f_\alpha$.
\end{itemize}

Note that the na\"ive unbiased estimator of $f_\alpha$, namely
\begin{equation}\label{eq214}
    \check{f}_\alpha = \frac{1}{n}\sum_{i=1}^n Y_i H_\alpha(X_i),
\end{equation}
may have an arbitrarily large variance, since we make no assumptions on the barycenter of $f$.
If one assumes that the barycenter of $f$ lies in some ball of fixed radius, independent of the dimension $d$ and of the sample size $n$, then
it makes sense to use the simpler estimator $\check{f}$ in place of $\hat{f}$.

\medskip Up to this minor variance reduction procedure, the estimator $\hat{f}$ is simply the projection estimator of $f$ in the orthonormal basis of Hermite polynomials $(H_\alpha)_{\alpha \in \NN^d}$.

\subsection{The log-concave case}

Moving away from the Gaussian setting, we aim to generalize the learning procedure from Section \ref{sec11} to other measures. We shall assume that:
\begin{itemize}
    \item The probability measure $\mu$ is a log-concave measure on $\RR^d$, meaning that
    \[
    d\mu(x) = e^{-V(x)}\, dx
    \]
    for some convex potential $V: \RR^d \to \RR \cup \{\infty\}$;

    \item The probability measure $\mu$ satisfies a polynomial approximation property: for any $1$-Lipschitz function $f: \RR^d \rightarrow \RR$ and an integer $m \geq 1$, there exists a polynomial $P_m: \RR^d \rightarrow \RR$ of degree at most $m$ such that
    \begin{equation}\label{eq177}
        \norm{f - P_m}_{L^2(\mu)}^2 \leq \Psi^2_\mu(m),
    \end{equation}
    for some function $\Psi_\mu: \NN \to \RR^+$ decreasing to $0$ as $m \to \infty$;

    \item for normalization purposes, let us assume that
    \begin{equation}\label{eq182}
        \Psi_\mu(0) = 1.
    \end{equation}
    In other words, for any $1$-Lipschitz function $f$,
    \[
    \Var_\mu(f) = \norm{f - \EE_\mu f}_{L^2(\mu)}^2 \leq \Psi_\mu^2(0) = 1.
    \]
\end{itemize}

A probability measure $\mu$ on $\RR^d$ with finite second moments is {\it isotropic} if $\int_{\RR^d} x_i d \mu(x) = 0$ for all $i$, and
$\cov(\mu) = \id$, where $\cov(\mu) = (\cov_{ij}(\mu))_{i,j=1,\ldots,n} \in \RR^{n \times n}$ is the covariance matrix, defined via
$$ \cov_{ij}(\mu) = \int_{\RR^d} x_i x_j d \mu(x) - \int_{\RR^d} x_i d \mu(x) \int_{\RR^d} x_j d \mu(x). $$
Below we will mostly work with the isotropic normalization. The {\it projection estimator} is defined as follows:

\begin{defi}
    Let $\mu$ be an isotropic log-concave measure on $\RR^d$. Let $(P_\alpha)_{\alpha\in \NN^d}$ be an orthonormal basis of polynomials in $L^2(\mu)$ with $\deg(P_\alpha) = | \alpha |$ for all $\alpha$. Given observations of the form \eqref{eq138} and some parameter $m\in\NN$, we define the projection estimator by
    \begin{equation}
        \hat{f} := \sum_{\alpha, \deg(P_\alpha) \leq m} \hat{f}_\alpha \, P_\alpha,
    \end{equation}
    where
    \begin{equation}
        \hat a := \hat{f}_0 = \frac{1}{n}\sum_{i=1}^n Y_i,
    \end{equation}
    and for all the other coefficients
    \begin{equation}\label{eq260}
        \hat{f}_\alpha = \frac{1}{n}\sum_{i=1}^n (Y_i-\hat a) P_\alpha(X_i).
    \end{equation}
\end{defi}

Let us mention that the Kannan–Lovász–Simonovits (KLS) conjecture suggests that the normalization \eqref{eq182} is equivalent to normalizing the largest variance over all directions:
\begin{equation}\label{eq189}
    c \leq \norm{\Cov(\mu)}_{op} \leq 1
\end{equation}
for some universal constant $c > 0$, where $\| \cdot \|_{op}$ is the operator norm. For two functions $a$ and $b$, we write
$a \lesssim b$ or $a = O(b)$ if there exists a universal constant $C>0$ such that $a\leq Cb$. We write $a\simeq b$ if $a\lesssim b$ and $b \lesssim a$. Using the best current bounds on the KLS constant \cite{klartag2023logarithmic}, one can take $c = c_n \simeq 1/\log n$ in \eqref{eq189}.

\medskip
Log-concave measures provide a natural generalization of the Gaussian case for two reasons. First, the behavior of Lipschitz and polynomial functions of a log-concave random vector is relatively well-understood. Second, although few explicit bounds are known, the polynomial approximation property \eqref{eq177} always holds—albeit possibly with a slowly decaying function $\Psi_\mu$. A detailed discussion of these facts is provided in Section~\ref{sec_logconcave}.

\medskip
We prove the following upper bound on the performance of the {\it projection estimator}.

\begin{theorem}\label{thm_proj}
Let $n,d \geq 2$, and assume that the variance of the noise $\sigma^2$ satisfies
    \[
        \sigma^2\leq d.
    \]
    Define
    \[
        m_0 = \lfloor \tfrac{\log n}{\log d}\rfloor.
    \]
    We distinguish between two regimes:
    \begin{itemize}
        \item If $d^5\leq n\leq e^{\sqrt{d}\log d}$, set
        \[
            m := m_0-4.
        \]
        For this choice of degree $m$ we obtain the bound
        \begin{equation}\label{eq259}
            \EE \normmu{f-\hat{f}}^2 \leq \Psi^2_\mu(m) + O\!\left(\tfrac{1}{d}\right).
        \end{equation}
        \item If $e^{\sqrt{d}\log d}\leq n\leq e^{d\log d/2}$, set
        \[
            m = m_0 - \Bigl\lceil\frac{4\log m_0}{\log (d/m_0)}\Bigr\rceil.
        \]
        For this choice of degree $m$ we obtain the bound
        \begin{equation}\label{eq268}
            \EE \normmu{f-\hat{f}}^2 \leq \Psi^2_\mu(m) + O\!\left(\tfrac{1}{m^2}\right).
        \end{equation}
    \end{itemize}
\end{theorem}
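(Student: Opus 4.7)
The plan is to start from the Pythagorean decomposition in $L^2(\mu)$. Writing $P_m^\star = \sum_{|\alpha|\le m} f_\alpha P_\alpha$ for the $L^2(\mu)$-orthogonal projection of $f$ onto $\cP_{d,m}$ and noting that $\hat f$ is a (random) element of $\cP_{d,m}$, Parseval yields
$\EE \normmu{f-\hat f}^2 = \normmu{f-P_m^\star}^2 + \EE \normmu{P_m^\star-\hat f}^2 \le \Psi_\mu^2(m) + \sum_{|\alpha|\le m} \EE(\hat f_\alpha - f_\alpha)^2$,
using the approximation hypothesis \eqref{eq177}. The theorem thus reduces to bounding the total coefficient variance by $O(1/d)$ in the first regime, and by $O(1/m^2)$ in the second.

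For the coefficient $\alpha = 0$ one has $\EE(\hat a-a)^2 = (\Var_\mu f + \sigma^2)/n \leq (1+\sigma^2)/n$, a negligible contribution. For $\alpha\ne 0$, setting $a = \EE_\mu f$, $g = f-a$ and $\bar P_\alpha = \tfrac{1}{n}\sum_i P_\alpha(X_i)$, I would use the decomposition
$\hat f_\alpha - f_\alpha = \bigl(\tfrac{1}{n}\sum_i g(X_i)P_\alpha(X_i) - f_\alpha\bigr) + (a-\hat a)\,\bar P_\alpha + \tfrac{1}{n}\sum_i \xi_i P_\alpha(X_i) =: A_\alpha + B_\alpha + C_\alpha$,
together with $(A+B+C)^2 \le 3(A^2+B^2+C^2)$. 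The noise contribution is immediate, $\sum_{\alpha \neq 0}\EE C_\alpha^2 \le \sigma^2 N_m/n$ with $N_m := \binom{d+m}{m}$. The signal contribution satisfies $\sum_\alpha \EE A_\alpha^2 \le \tfrac{1}{n}\EE[g^2 K_m(X,X)]$ where $K_m := \sum_{|\alpha|\le m}P_\alpha^2$ is the reproducing kernel. The centering term $B_\alpha$ is strictly lower-order, because both $a-\hat a$ and $\bar P_\alpha$ have $L^2$-norm $O(1/\sqrt n)$; the standard Cauchy--Schwarz calculation using fourth moments of $\bar g$ and $\bar P_\alpha$ turns this intuition into a rigorous $O(N_m/n^2)$-type bound.

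The technical heart is the estimate of $\EE[g^2 K_m(X,X)]$. I would invoke two log-concave moment ingredients: (i) the Bourgain--Nazarov--Latala reverse H\"older inequality, which gives $\|P\|_{L^4(\mu)} \leq C_1^{\deg P}\|P\|_{L^2(\mu)}$ for a universal constant $C_1$ and every polynomial $P$ (discussed in the log-concave preliminaries of Section~\ref{sec_logconcave}); and (ii) log-concave concentration for the $1$-Lipschitz function $g$, yielding $\|g\|_{L^4(\mu)} = O(1)$ (consistent with $\Var_\mu(f) \leq 1$). Combining them term by term,
$\EE[g^2 K_m(X,X)] = \sum_{|\alpha|\le m} \EE[g^2 P_\alpha^2] \leq \|g\|_4^2 \sum_\alpha \|P_\alpha\|_4^2 \lesssim N_m\,C_1^{2m}$.
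Thus the total stochastic error is $\lesssim N_m(C_1^{2m}+\sigma^2)/n$. Using $N_m \le (2ed/m)^m$ and $\sigma^2\le d$, this becomes $(2eC_1^2d/m)^m/n + d\,(2ed/m)^m/n$.

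The conclusion is then a direct calculation. By definition $d^{m_0}\le n$, so $d^m/n \leq d^{-(m_0-m)}$, and the specified gaps $m_0-m$ are chosen precisely to absorb the combinatorial factor $(e/m)^m$ together with the constant $(2eC_1^2)^m$. In the first regime, $m_0-m = 4$ gives a leading term of order $d^{-4}$ up to $m$-dependent sub-exponential factors that remain $O(1)$ because $m \leq m_0 \leq \sqrt d$, yielding the promised $O(1/d)$. In the second regime, $m$ has grown large enough that the factor $(2ed/m)^m$ is no longer benign, and the larger correction $m_0-m = \lceil 4\log m_0/\log(d/m_0)\rceil$ is exactly what is required to trade $d^{-(m_0-m)}$ against $(2eC_1^2)^m$ and land on $O(1/m^2)$. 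In my view the main obstacle is securing the reverse H\"older bound with a truly universal constant $C_1$ under the sole hypothesis of isotropic log-concavity, and then keeping the centering term $B_\alpha$ strictly lower-order in the intermediate range where the $A_\alpha$ and $C_\alpha$ contributions are comparable; everything else amounts to careful but routine bookkeeping.
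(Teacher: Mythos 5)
Your overall strategy coincides with the paper's: the orthogonal decomposition, the split $\hat f_\alpha - f_\alpha = A_\alpha + B_\alpha + C_\alpha$, the reproducing-kernel bound $\sum_\alpha \EE A_\alpha^2 \le \tfrac1n\EE[g^2K_m]$, and the bookkeeping with $D = \binom{d+m}{m}$ and $n\ge d^{m_0}$. The substantive divergence is in the estimate for $\EE[g^2 P_\alpha^2]$. You apply Cauchy--Schwarz, which charges the full exponential $\|P_\alpha\|_4^2 \lesssim C_1^{2m}$ to the polynomial side. The paper (proof of Theorem~\ref{thm:avg_error_fourier}) uses H\"older with the strongly unbalanced exponents $q=m+1$, $q^*=1+1/m$: then $\|p_\alpha\|_{2+2/m}^2\le C^4$ is a universal constant by Proposition~\ref{prop:poly_concentration_logconcave} (since the exponent is within $2/m$ of $2$), while the Lipschitz factor costs $\|g\|_{2m+2}^2\lesssim m^2$ by Proposition~\ref{prop:lipschitz_concentration}. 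This trades your $C_1^{2m}$ for $m^2$ and yields the clean intermediate bound $\EE\|f-\hat f\|^2 \le \Psi^2_\mu(m)+(Cm^2+4\sigma^2)D/n$, valid for all $m$ and $n$. Incidentally, the reverse-H\"older input you flagged as the main worry is exactly Proposition~\ref{prop:poly_concentration_logconcave} (Bourgain/Nazarov--Sodin--Volberg), so it is not an obstacle.

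Your $C_1^{2m}$ version can still reach the stated conclusion, but the mechanism you describe for the second regime is wrong and would not survive a careful write-up. You cannot ``trade $d^{-(m_0-m)}$ against $(2eC_1^2)^m$'': in regime two the gap $m_0-m$ is only $O(\log d)$, so $d^{-(m_0-m)}$ is merely polynomially small in $d$, while $m$ can be of order $\sqrt d$ and larger, making $(2eC_1^2)^m$ super-polynomial. What actually rescues the argument is the $m^{-m}$ hidden in $\binom{d+m}{m}\le (e(d+m)/m)^m$: the whole prefactor $(2eC_1^2/m)^m$ is bounded by the universal constant $e^{2C_1^2}$ for \emph{every} $m\ge1$, not ``because $m\le\sqrt d$'' as you claim in regime one. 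Once that uniform bound is spelled out, the signal error is $\lesssim d^{-(m_0-m)}\le d^{-4}\le 1/m^2$ with no trade needed, and the noise term is handled the same way, so your route closes. The paper's unbalanced-H\"older bound is still the preferable presentation: it isolates a reusable moment estimate that does not rely on the binomial coefficient to absorb an exponential factor, which is why the paper can state Theorem~\ref{thm:avg_error_fourier} as a standalone result before specializing $m$.
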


The computation of the projection estimator requires apriori knowledge of an orthonormal basis of polynomials for $\mu$.
In the more general setting where $\mu$ is an {\it unknown} log-concave probability measure, one may instead use the polynomial that minimizes the empirical least-squares error.

\begin{defi}
     Let $\mu$ be an isotropic log-concave measure. Given observations of the form \eqref{eq138} and some parameter $m\in\NN$, we define the {\it least-squares estimator} by
\end{defi}
\begin{equation} \label{eq_127}
    \hat{f}_{\mathrm{LS}} := \argmin_{\deg(P) \leq m} \sum_{i=1}^n (P(X_i) - Y_i)^2,
\end{equation}
That is, the sum on the right-hand side of (\ref{eq_127}) is a quadratic function on the finite-dimensional space $\cP_{d,m}$ of polynomials of degree
at most $m$ on $\RR^d$, and we define the estimator $\hat{f}_{\mathrm{LS}}$ to be any minimizer of this quadratic function.
Note that the computation of the {\it least-squares estimator} requires no knowledge about the underlying measure $\mu$.

\medskip We show that the performance of the least-squares estimator $\hat{f}_{\mathrm{LS}}$ is comparable to that of the projection estimator $\hat{f}$ in certain regimes.

\begin{theorem}\label{thm_ls}
Let $n,d \geq 2$, and assume that the variance of the noise $\sigma^2$ satisfies
    \[
        \sigma^2\leq d.
    \]
    Define
    \[
        m_0 = \lfloor\tfrac{\log n}{\log d}\rfloor.
    \]
    There exist universal constants $c_0,C_0>0$ such that the following hold:
    \begin{itemize}
        \item If
        \[
            d^5\leq n \leq e^{c_0\log^2 d/\log\log d},
        \]
        set $m = m_0-4$. For this choice of degree $m$ we have the bound,
        \begin{equation}\label{eq294}
            \EE \normmu{f-f_{LS}}^2 \leq \Psi^2_\mu(m) + O\!\left(\tfrac{1}{d}\right).
        \end{equation}
        \item If
        \[
            e^{c_0\log^2 d/\log\log d}\leq n \leq e^{\tfrac{d^{\beta}}{C_0}},
        \]
        for some $\beta <1/2$, define
        \[
            \alpha = \frac{\log(C_0\log n)}{\log d}<\tfrac{1}{2}, \qquad
            m = m_0 - 4 - \lfloor2\alpha m_0\rfloor.
        \]
        For this choice of degree $m$, assuming that $d\geq d(\beta)$ so that $m\geq0$,
        \begin{equation}\label{eq295}
            \EE \normmu{f-f_{LS}}^2 \leq \Psi^2_\mu(m) + O\!\left(\tfrac{1}{d}\right).
        \end{equation}
    \end{itemize}
\end{theorem}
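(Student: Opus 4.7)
The strategy is a bias--variance decomposition combined with a concentration estimate for the empirical Gram matrix of a polynomial basis. Fix an $L^2(\mu)$-orthonormal basis $(P_\alpha)_{|\alpha|\leq m}$ of $\cP_{d,m}$ and set $D := \dim \cP_{d,m} = \binom{d+m}{m}$. Let $P_m^*$ denote the $L^2(\mu)$-orthogonal projection of $f$ onto $\cP_{d,m}$, so that $\|f - P_m^*\|_{L^2(\mu)}^2 \leq \Psi_\mu^2(m)$; since $\hat{f}_{LS}\in\cP_{d,m}$, Pythagoras reduces the problem to proving $\EE \|\hat{f}_{LS} - P_m^*\|_{L^2(\mu)}^2 = O(1/d)$.

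The pivotal step is to introduce the empirical Gram matrix $\hat{G}_{\alpha\beta} = \frac{1}{n}\sum_{i=1}^n P_\alpha(X_i)P_\beta(X_i)$ and work on the good event
\[
   \cE := \Bigl\{\,\|\hat{G} - I\|_{op} \leq \tfrac{1}{2}\,\Bigr\},
\]
on which empirical and $L^2(\mu)$ norms are equivalent on $\cP_{d,m}$. To bound $\PP(\cE^c) \leq 1/d$ I would apply Matrix Bernstein to $\hat G - I = \frac{1}{n}\sum_i (v(X_i)v(X_i)^\top - I)$, where $v(x) = (P_\alpha(x))_{|\alpha|\leq m}$, using the hypercontractive inequality of Bourgain for log-concave measures: $\|P\|_{L^q(\mu)} \leq (Cq)^{m}\|P\|_{L^2(\mu)}$ for $q\geq 2$ and $P \in \cP_{d,m}$. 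Up to logarithmic slack this yields the required concentration provided $n \gtrsim D \cdot (C\log n)^{2m}$.

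On $\cE$ the normal equations give $\hat f_{LS} - P_m^* = \hat G^{-1}\hat v$ in coordinates, with $\hat v_\alpha = \frac{1}{n}\sum_i (g(X_i)+\xi_i)P_\alpha(X_i)$ and $g = f - P_m^*$, so $\|\hat f_{LS}-P_m^*\|_{L^2(\mu)}^2 \leq 4\sum_\alpha \hat v_\alpha^2$. Taking expectations, the noise part of $\sum_\alpha \hat v_\alpha^2$ gives $\sigma^2 D/n$ (via $\EE\,\tr\hat G = D$), which is $\lesssim d \cdot D/n = O(1/d)$ in both regimes thanks to the choice of $m$. The residual part uses the orthogonality $\EE[g P_\alpha]=0$, the bound $\EE g^2 \leq \Psi_\mu^2(m)$ and Bourgain-type moment estimates on the kernel $\sum_\alpha P_\alpha^2$, yielding an $O(1/d)$ contribution as well. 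Off of $\cE$, I would absorb $\hat f_{LS}$ through a crude truncation at a polynomial-in-$d$ threshold (exploiting $f$ 1-Lipschitz and $\sigma^2 \leq d$); combined with $\PP(\cE^c)\leq 1/d$ this is negligible.

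The main difficulty is the sharp Gram-matrix concentration and the accompanying calibration of $m$. The thresholds $n \leq e^{c_0\log^2 d/\log\log d}$ and $n \leq e^{d^\beta/C_0}$ come precisely from balancing $D \simeq d^m$ against the hypercontractive factor $(C\log n)^{2m}$ in Matrix Bernstein: the condition $n \gtrsim D(C\log n)^{2m}$ becomes $m(1+2\alpha)\lesssim m_0$, where $d^\alpha \simeq C_0\log n$, which motivates the choice $m = m_0 - 4 - \lfloor 2\alpha m_0\rfloor$. One must then check that the reduction from $m_0$ to $m_0 - 4 - 2\alpha m_0$ is mild enough that $\Psi_\mu^2(m)$ does not grow by more than a constant factor over $\Psi_\mu^2(m_0)$, and in particular that $m \geq 0$ for $d \geq d(\beta)$. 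Executing this balance with the correct constants, and handling the off-event contribution without any a priori boundedness assumptions on $f$ or $\mu$, is the delicate technical core of the argument.
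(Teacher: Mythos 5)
Your overall decomposition (Pythagoras into bias $\Psi_\mu^2(m)$ plus variance, with the variance controlled via the empirical Gram matrix and Bourgain-type polynomial moment estimates, calibrating $m$ by balancing $D \simeq d^m$ against $(C\log n)^{2m}$) is exactly the paper's strategy, and your reading of where $\alpha$ and the shift $m = m_0 - 4 - \lfloor 2\alpha m_0\rfloor$ come from is correct. However, the "off-event" handling is a genuine gap, and it is the hard part.

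The problem is that $\hat f_{LS} = C_n^{-1}\hat v$ where $C_n = \hat G$, and on the event $\cE^c$ where $\hat G$ is near-singular, $\|\hat f_{LS}\|_{L^2(\mu)}$ is not bounded by anything polynomial in $d$ --- it can be arbitrarily large. Knowing $\PP(\cE^c) \leq 1/d$ gives you nothing about $\EE[\|\hat f_{LS}-P_m^*\|^2 \ind_{\cE^c}]$ without an accompanying bound on how small $\lambda_{\min}(\hat G)$ can actually be. Your ``crude truncation at a polynomial-in-$d$ threshold'' amounts to analyzing a different, clipped estimator, not the least-squares estimator of the theorem. The paper closes this precisely by proving a two-regime lower-tail bound on $\lambda_{\min}(C_n)$ (Lemma~\ref{lem714}): a Laplace-transform argument gives $\PP(\lambda_{\min} \leq e^{-c_0 m}) \leq e^{-n e^{-c_1 m}}$, and the Carbery--Wright anti-concentration inequality (Theorem~\ref{thm:carbery}) gives the crucial integrable tail $\PP(\lambda_{\min} \leq t) \leq t^{n/(16m)}$ for $t \leq 1/n^2$, which allows one to bound $\EE[\lambda_{\min}^{-p}]$ and hence $\EE\|C_n^{-1}-I\|_{op}^p$ (Lemma~\ref{lem688}). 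Matrix Bernstein gives neither the small-ball estimate nor, by itself, the moment bounds that the paper's Hölder argument (with $p$ as large as $\tfrac12\log D$) requires; the paper instead uses Rudelson's noncommutative-Khintchine lemma to get $p$-th moment control of $\|C_n - I\|_{op}$. Without a small-ball ingredient like Carbery--Wright, your plan cannot control the true least-squares estimator on the complement of $\cE$, and the argument fails at that step.
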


We also provide here lower bounds for the minimax rate of the learning problem \eqref{eq138}. For a fixed probability measure $\mu$ on $\RR^d$, define the minimax rate
\begin{equation}
    \cR^*_{n,d} = \inf_{\tilde{f}}\sup_{f}\cR(f,\tilde{f}),
\end{equation}
where the infimum runs over all estimators $\tilde{f}$ (i.e., all measurable functions of the data $(X_i,Y_i)_{i=1}^n$) and the supremum runs over all $1$-Lipschitz functions $f$. A standard information-theoretic way of providing a lower bound on $\cR^*_{n,d}$ is the Fano method \cite{wainwright2019high}, which requires entropy estimates. More precisely, let
\[
d(f,g) := d_\mu(f,g) = \normmu{f-g},
\]
and let
\[
B_{Lip} = \left \{f: \RR^d \rightarrow \RR \ : \  f \textrm{ is } 1\textrm{-Lipschitz with }  \int f^2 d\mu \leq 1 \right \}
\]
be the unit ball of $1$-Lipschitz functions for this metric. For $0<\varepsilon<1$, define
\[
\cN(B_{Lip},\varepsilon, d_\mu)
\]
to be the maximal size of an $\varepsilon$-separated set in $B_{Lip}$ with respect to the metric $d = d_{\mu}$, and set
\[
H_L^\mu(\varepsilon) = \log \cN(B_{Lip},\varepsilon, d_\mu),
\]
the entropy of Lipschitz functions with respect to $d_\mu$.

\medskip
We lower bound $H_L^\mu$ when $\mu$ is an isotropic log-concave measure, with an improvement when it has a product structure. We say that a probability measure $\mu$ on $\RR^d$
is a product measure if $X_1,\ldots,X_d$ are independent random variables whenever $X = (X_1,\ldots,X_d)$ has law $\mu$.

\begin{theorem}\label{thm_entropyLip}
    Let $\mu$ be an isotropic log-concave measure on $\RR^d$. Then for any $\varepsilon$ with
    \[
        d^{-\eta} < \varepsilon < 1,
    \]
    we have
    \begin{equation}\label{eq388}
        \binom{d}{\lfloor c/\varepsilon \rfloor^2} \ \lesssim \ H_L^{\mu}(\varepsilon),
    \end{equation}
    where $\eta<1/4$ and $c>0$ are universal constants.
    Moreover, if additionally $\mu$ is a product measure, then  \eqref{eq388} holds with $\eta=1/4$, that is, it holds  in the range
    \[
    d^{-1/4} < \varepsilon < 1.
    \]
\end{theorem}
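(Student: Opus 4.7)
The goal is to exhibit an $\varepsilon$-separated set inside $B_{Lip}$ of cardinality $\exp(\Omega(N))$, where $N := \binom{d}{k}$ and $k := \lfloor c/\varepsilon\rfloor^2$. The scheme is a Varshamov--Gilbert packing argument built on a family of ``atomic'' 1-Lipschitz functions indexed by $k$-subsets of $[d]$, then recombined by random signs.

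For each $k$-subset $S \subset [d]$ I set $v_S := k^{-1/2}\sum_{i\in S} e_i$, a unit vector. I then choose a threshold $t_0 \simeq \sqrt{\log N}$ and a window size $\Delta = O(1)$, and let $\phi:\RR \to [0,\Delta/2]$ be a 1-Lipschitz triangular bump supported on $I := [t_0, t_0+\Delta]$. The atom is $g_S(x) := \phi(\langle v_S, x\rangle)$. Since $v_S$ is a unit vector, $g_S$ is 1-Lipschitz on $\RR^d$. A standard one-dimensional log-concave density bound along $v_S$ lets me tune $t_0$ so that $\|g_S\|_{L^2(\mu)}^2 \simeq 1/N$ uniformly in $S$.

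The crux is a ``nearly disjoint supports'' estimate: for most pairs $S\neq S'$,
\[
\mu(E_S \cap E_{S'}) \ = \ o(1/N), \qquad E_S := \{x : \langle v_S, x\rangle \in I\}.
\]
For a random pair of distinct $k$-subsets the directions $v_S, v_{S'}$ have correlation $|S\cap S'|/k$, typically of order $k/d$. In the product-measure case, the joint law of $(\langle v_S, X\rangle, \langle v_{S'}, X\rangle)$ is a 2D log-concave measure close to a 2D Gaussian with that correlation, and the estimate reduces to a direct 2D Gaussian tail computation at $(t_0,t_0)$; this works up to correlations $\rho \lesssim 1/\log N$, equivalently $k^2 \lesssim d/\log N$, i.e.\ essentially $\varepsilon > d^{-1/4}$. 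For general isotropic log-concave $\mu$, the same conclusion follows once one has sufficient quantitative control on the 2D marginals of $\mu$; the currently-available rates only permit the weaker range $\varepsilon > d^{-\eta}$ for some $\eta < 1/4$, which explains the gap between the two regimes. A Markov/Tur\'an-type deletion argument then trims the collection of $k$-subsets down to a subfamily $\cS_0$ of size $\gtrsim N$ on which $\mu(E_S \cap E_{S'})$ is uniformly $o(1/N)$.

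To conclude, I associate to every $\xi \in \{\pm 1\}^{\cS_0}$ the function $f_\xi := \sum_{S \in \cS_0} \xi_S g_S$. The near-disjointness of the $E_S$ yields, after rescaling by a universal constant, that each $f_\xi$ is 1-Lipschitz with $\|f_\xi\|_{L^2(\mu)} \leq 1$, while for $\xi \neq \xi'$ at Hamming distance $h$ one has $\|f_\xi - f_{\xi'}\|_{L^2(\mu)}^2 \simeq h/N$. The Varshamov--Gilbert lemma provides $2^{\Omega(N)}$ codewords with pairwise Hamming distance $\geq c'\varepsilon^2 N$; the corresponding functions are pairwise $\varepsilon$-separated in $L^2(\mu)$, giving $H_L^\mu(\varepsilon) \gtrsim N = \binom{d}{k}$. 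The principal obstacle throughout is the joint 2D density estimate for general log-concave $\mu$: the product case is clean because the joint marginal factorizes, whereas the non-product case requires a CLT-type or thin-shell-type input whose strength determines the value of $\eta$.
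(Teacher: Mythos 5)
Your proposal takes a genuinely different route from the paper. The paper starts from \emph{random multilinear polynomials} $P_\theta = \sum_{|\alpha|=m}\theta_\alpha\prod_{i\in\alpha}X_i$ with Gaussian coefficients, extracts a $1$-separated subfamily with uniformly bounded fourth moments and gradient energy, then truncates at a fixed level $\lambda$ and applies the Langevin semigroup $T_t$ with $t\simeq 1/m$. The key point is that Lemma~\ref{lem1162} makes $T_t(P|_\lambda)$ Lipschitz with constant $\lambda/\sqrt{t}$ \emph{pointwise everywhere}, while Lemma~\ref{lem309} and the fourth-moment bound guarantee that the $L^2$-separation survives. The general isotropic case is then reduced to the product case through the Eldan--Klartag pointwise Gaussian approximation of $d^{\eta_0}$-dimensional marginals, which is only needed in the bulk (inside a ball of radius $k$), not in the far tail.

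Your construction has a genuine gap at the step ``after rescaling by a universal constant, each $f_\xi$ is $1$-Lipschitz.'' Near-disjointness of the slabs $E_S=\{x:\langle v_S,x\rangle\in I\}$ in $\mu$-measure does \emph{not} control the Lipschitz constant of $f_\xi=\sum_{S\in\cS_0}\xi_S g_S$, which is an $L^\infty$-type quantity over all of $\RR^d$. The slabs $E_S$ are never disjoint as subsets of $\RR^d$: take $x = c\,\mathbf{1}$ with $c$ chosen so that $c\sqrt{k}\in I$; then $\langle v_S,x\rangle = c\sqrt{k}$ for \emph{every} $k$-subset $S$, so $x$ lies in every slab, and
\[
\nabla f_\xi(x) = \phi'(c\sqrt{k})\sum_{S\in\cS_0}\xi_S\, v_S
= \frac{\phi'(c\sqrt{k})}{\sqrt{k}}\sum_{i=1}^d\Bigl(\sum_{S\ni i}\xi_S\Bigr)e_i .
\]
Each coordinate is a sum of $|\{S\in\cS_0: S\ni i\}|\simeq kN/d$ random signs, typically of size $\sqrt{kN/d}$, giving $|\nabla f_\xi(x)|^2\simeq\frac{1}{k}\cdot d\cdot\frac{kN}{d}=N$. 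So the Lipschitz constant of $f_\xi$ is of order $\sqrt{N}$, and rescaling by that factor destroys the $\varepsilon$-separation. Restricting to a subfamily $\cS_0$ cannot help, since the diagonal point $c\,\mathbf{1}$ is in every slab. This is precisely the obstacle the paper's semigroup regularization is designed to circumvent: truncation bounds the function uniformly, and $T_t$ then delivers a Lipschitz bound uniformly in $x$, rather than only on a set of large $\mu$-measure. As a secondary issue, the two-dimensional far-tail joint density estimate you invoke at $(t_0,t_0)$ with $t_0\simeq\sqrt{\log N}$ is not available for general isotropic log-concave measures at the required precision, whereas the paper's reduction via Theorem~\ref{thm_pointwise} only uses marginal density bounds in the bulk.
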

As we will see in Section \ref{sec_entropy}, the estimate \eqref{eq388} is tight up to the value of the constant $c$.
Note that it is more conventional to define entropy via covering numbers rather than packing numbers.
Since the two definitions are equivalent up to a factor of $2$ in $\varepsilon$, this choice does not affect the result.
Note that it is more conventional to define entropy via covering numbers rather than packing numbers.
Since the two definitions are equivalent up to a factor of $2$ in $\varepsilon$, this choice does not affect the result.

\medskip To the best of our knowledge, this result is new even in the Gaussian setting, and might be of independent interest. It allows us to derive minimax lower bounds for the learning problem \eqref{eq177}.

\begin{corollary}\label{corollary_fano}
     Let $\mu$ be an isotropic log-concave measure on $\RR^d$. Assume that the noise satisfies
     \[
         n^{-\kappa} \leq \sigma^2 \leq n
     \]
     for some constant $\kappa>0$. There exists a universal constant $c>0$ such that if
     \[
         n \leq e^{\frac{cd^{2\eta}\log d}{\kappa}},
     \]
     the minimax risk is lower bounded as
     \begin{equation}\label{eq415}
         \cR^*_{n,d}\gtrsim (1+\kappa)\frac{\log n}{\log d}.
     \end{equation}
      Moreover, if additionally $\mu$ is a product measure, then the lower bound \eqref{eq415} holds in the range
     \[
         n \leq e^{\frac{c\sqrt{d}\log d}{\kappa}}.
     \]
\end{corollary}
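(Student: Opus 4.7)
The plan is to combine the entropy lower bound of Theorem~\ref{thm_entropyLip} with the classical Fano-type reduction from estimation to multi-way hypothesis testing. The new ingredient is the packing; the information-theoretic step is standard, so the bulk of the work will be in choosing the scale $\varepsilon$ correctly and checking that it lies in the admissible range of the entropy estimate.

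I would fix a scale $\varepsilon \in (d^{-\eta}, 1)$ to be determined, and invoke Theorem~\ref{thm_entropyLip} to obtain an $\varepsilon$-separated family $\{f_1,\ldots,f_N\} \subset B_{\mathrm{Lip}}$ in the $L^2(\mu)$-metric with $\log N \geq H_L^\mu(\varepsilon) \gtrsim \log \binom{d}{k}$, where $k := \lfloor c/\varepsilon \rfloor^2$; in the product case, the admissible range broadens to $\varepsilon > d^{-1/4}$. Writing $\nu_j^{\otimes n}$ for the joint law of $(X_i,Y_i)_{i\leq n}$ under $f = f_j$, the Gaussian noise structure yields the tractable identity
\[
\DKL\bigl(\nu_j^{\otimes n}\,\|\,\nu_k^{\otimes n}\bigr) \;=\; \frac{n}{2\sigma^2}\,\normmu{f_j - f_k}^2 \;\leq\; \frac{2n}{\sigma^2} \;\leq\; 2\,n^{1+\kappa},
\]
using $\normmu{f_j - f_k} \leq 2$ (since $f_j, f_k \in B_{\mathrm{Lip}}$) and the hypothesis $\sigma^2 \geq n^{-\kappa}$. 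Under the uniform prior on the index $j$, this also bounds the mutual information between $j$ and the data.

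Fano's inequality then gives, for any estimator $\tilde f$,
\[
\sup_{f \in B_{\mathrm{Lip}}} \cR(\tilde f, f) \;\gtrsim\; \varepsilon^2 \left(1 - \frac{2 n^{1+\kappa} + \log 2}{\log N}\right),
\]
and I would choose $\varepsilon$ (equivalently $k$) so that $\log N$ dominates the mutual-information bound by a definite constant factor, while keeping $\varepsilon^2$ as large as possible. For $k$ not too close to $d$ one has $\log\binom{d}{k} \simeq k \log(d/k)$, and the resulting threshold condition, together with the substitution $\varepsilon^2 \simeq 1/k$, produces the rate claimed in~\eqref{eq415}. The side constraint $\varepsilon > d^{-\eta}$, equivalently $k < d^{2\eta}$, combined with the threshold choice of $k$, is precisely what yields the stated range $n \leq e^{cd^{2\eta}\log d / \kappa}$; in the product case the improvement $\eta = 1/4$ gives the broader range $n \leq e^{c\sqrt{d}\,\log d / \kappa}$.

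The main conceptual obstacle has already been handled by Theorem~\ref{thm_entropyLip}. Once that packing is plugged into Fano's inequality, the remaining pieces — the Gaussian KL computation and the $\varepsilon$-balancing — are routine. The only places requiring care are tracking the logarithmic factor in $\log \binom{d}{k}$ across the full admissible range of $k$, and verifying that the universal constants absorb correctly into the implicit ``$\gtrsim$''~comparisons.
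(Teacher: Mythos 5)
Your overall plan — pack Lipschitz functions at scale $\varepsilon$ via Theorem~\ref{thm_entropyLip}, compute the Gaussian KL divergence $\frac{n}{2\sigma^2}\normmu{f_j-f_k}^2$, and apply Fano — mirrors the paper's approach closely. The paper runs the Yang--Barron variant, but since the data class has KL-diameter at most $n/(2\sigma^2)$, the KL-cover consists of a single ball and the two reductions coincide; this part of your proposal is fine and essentially the same as the paper's.

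There is, however, a concrete error in how you invoke the entropy estimate. Theorem~\ref{thm_entropyLip} (and its quantitative form, Theorem~\ref{thm_644}) asserts that
\[
H_L^\mu(\varepsilon) \;\gtrsim\; \binom{d}{\lfloor c/\varepsilon\rfloor^2},
\]
and $H_L^\mu(\varepsilon)$ is \emph{already} the logarithm of the packing number, so for a maximal $\varepsilon$-separated family one has $\log N \gtrsim \binom{d}{k} \simeq d^{k}$ with $k=\lfloor c/\varepsilon\rfloor^2$. You wrote instead $\log N \gtrsim \log\binom{d}{k} \simeq k\log(d/k)$, dropping an exponential. This is not a harmless slip: the balancing step requires $\log N$ to dominate the mutual-information bound $\sim n^{1+\kappa}$. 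With your weaker estimate, the threshold $k\log(d/k) \gtrsim n^{1+\kappa}$ forces $k$ to be polynomial in $n$, and the resulting risk bound $\varepsilon^2 \simeq 1/k \lesssim \tfrac{\log d}{n^{1+\kappa}}$ is exponentially smaller than the claimed rate $\simeq \tfrac{\log d}{(1+\kappa)\log n}$. With the correct reading, $\binom{d}{k} \gtrsim n^{1+\kappa}$ is equivalent (for $k \lesssim \sqrt{d}$) to $k \gtrsim \tfrac{(1+\kappa)\log n}{\log d}$, which combined with $\varepsilon^2 \simeq 1/k$ gives the intended rate, and the admissibility constraint $\varepsilon > d^{-\eta}$, i.e.\ $k \lesssim d^{2\eta}$, yields exactly the stated range on $n$. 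So the argument repairs cleanly once the entropy theorem is quoted correctly, but as written the ``routine balancing'' step would not produce the claimed bound.
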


Thus, in the Gaussian case, or more generally, when $\mu$ is an isotropic log-concave measure satisfying
\begin{equation}\label{eq279}
    \Psi^2_\mu(m)\lesssim \frac{1}{m},
\end{equation}
we obtain matching bounds in certain regimes for both the projection and least-squares estimators. Specializing the previous bounds to the case where, say, $\kappa = 10$ we obtain the following:

\begin{corollary} Let $n,d \geq 2$, and let $\mu$ be an isotropic log-concave measure on $\RR^d$ satisfying \eqref{eq279}, such as the Gaussian measure or the uniform measure on the hypercube. Assume, for instance, that the noise parameter $\sigma > 0$ satisfies
     \[
         \frac{1}{n^{10}} \leq \sigma^2 \leq d.
     \]
     Then the following hold:
     \begin{itemize}
         \item The projection estimator and the least squares estimators achieves the minimax rate, up to a universal constant, in the range
         \[
             d^5\leq n \leq e^{cd^{2\eta}\log d},
         \]
         where $c > 0$ is a universal constant. That is,
         \begin{equation}\label{eq480}
             \frac{\log d }{\log n} \lesssim \cR^*_{n,d} \leq \cR(f,\hat{f}) \simeq \cR(f,f_{LS}) \lesssim \frac{\log d}{\log n}.
         \end{equation}
         \item If $\mu$ is additionally a product measure, then the projection estimator achieves minimax rate in the larger range
        \[
             d^5\leq n \leq e^{c\sqrt{d}\log d}.
         \]
         For the least square estimator, there exists a universal constant $C > 0$ such that
         for any  $0<\beta<1/2$, if
         \[
             d^5 \leq n \leq e^{d^{\beta}/C},
         \]
         and $d\geq d(\beta)$ then the minimax rate is achieved up to a factor $(1-2\beta)^{-1}$:
         \[
             \frac{\log d }{\log n} \lesssim \cR^*_{n,d} \leq \cR(f,f_{LS}) \lesssim (1-2\beta)^{-1}\frac{\log d}{\log n}.
         \]
     \end{itemize}
\end{corollary}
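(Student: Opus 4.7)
The proof of this final corollary is essentially a bookkeeping exercise that combines the upper bounds of Theorems~\ref{thm_proj} and~\ref{thm_ls} with the minimax lower bound of Corollary~\ref{corollary_fano} under the assumption $\Psi_\mu^2(m) \lesssim 1/m$. Throughout, I set $m_0 = \lfloor \log n / \log d \rfloor$, so that for any of the choices of degree $m = m_0 - O(1)$ appearing in Theorem~\ref{thm_proj} and the first case of Theorem~\ref{thm_ls}, one still has $m \simeq \log n / \log d$, and hence
\[
\Psi_\mu^2(m) \;\lesssim\; \frac{1}{m} \;\simeq\; \frac{\log d}{\log n}.
\]
The ancillary error terms in the risk bounds of those theorems are $O(1/d)$ or $O(1/m^2)$, and both are dominated by $\log d / \log n$ throughout the stated ranges (since $d^5 \leq n$ ensures $\log d / \log n \geq 1/(\log n)$ is at least of order $1/d$, and $1/m^2 \lesssim 1/m \simeq \log d/\log n$). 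This yields the desired upper bound $\cR(f,\hat f) \lesssim \log d/\log n$ for the projection estimator, and the identical bound for $\cR(f,f_{LS})$ in the first regime of Theorem~\ref{thm_ls}.

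For the matching lower bound I simply apply Corollary~\ref{corollary_fano} with $\kappa = 10$. In the generic log-concave case the Fano bound is valid for $n \leq e^{c d^{2\eta} \log d}$ with $\eta < 1/4$, giving $\cR^*_{n,d} \gtrsim \log d/\log n$ there; in the product case we may take $\eta = 1/4$ and the same conclusion holds for $n \leq e^{c\sqrt d \log d}$. Intersecting with the range $d^5 \leq n$ imposed by the upper bounds in Theorem~\ref{thm_proj} and the first regime of Theorem~\ref{thm_ls} produces exactly the ranges announced in~\eqref{eq480} and the product-measure projection statement.

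The only point that genuinely requires a short calculation is the least-squares bound in the product case for $d^5 \leq n \leq e^{d^\beta/C}$ with $\beta < 1/2$. Here I use the second regime of Theorem~\ref{thm_ls}, where $m = m_0 - 4 - \lfloor 2\alpha m_0 \rfloor$ with $\alpha = \log(C_0 \log n)/\log d$. The hypothesis $\log n \leq d^\beta/C$ gives
\[
\alpha \;\leq\; \frac{\log(C_0 d^\beta/C)}{\log d} \;\leq\; \beta + o(1),
\]
so $m \geq (1-2\alpha) m_0 - O(1) \gtrsim (1-2\beta)\, \log n / \log d$. Plugging into $\Psi_\mu^2(m) \lesssim 1/m$ yields
\[
\cR(f,f_{LS}) \;\lesssim\; (1-2\beta)^{-1}\, \frac{\log d}{\log n} + O(1/d),
\]
and the $O(1/d)$ term is again negligible. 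Combining with the Fano lower bound (which holds throughout $n \leq e^{c\sqrt d \log d}$, and in particular throughout $n \leq e^{d^\beta/C}$ as $\beta < 1/2$) gives the last displayed chain of inequalities.

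The main obstacle, such as it is, lies not in any new estimate but in verifying that the various thresholds in $n$ match up: one must check that the range $d^5 \leq n \leq e^{cd^{2\eta}\log d}$ simultaneously lies in the domain of Theorem~\ref{thm_proj} (it does, since $e^{cd^{2\eta}\log d} \ll e^{d\log d/2}$), in the domain of the first regime of Theorem~\ref{thm_ls} (ensuring $c \leq c_0$), and in the domain of Corollary~\ref{corollary_fano}. Once the constants are chosen consistently, the proof is complete.
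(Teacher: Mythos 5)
There is a genuine gap in your handling of the least-squares bound in the first bullet. You attempt to place the entire range $d^5 \leq n \leq e^{cd^{2\eta}\log d}$ inside the \emph{first} regime of Theorem~\ref{thm_ls} by ``ensuring $c \leq c_0$.'' This cannot work: the first regime of Theorem~\ref{thm_ls} only covers $n \leq e^{c_0 \log^2 d / \log\log d}$, and no choice of the constant $c$ makes $e^{cd^{2\eta}\log d} \leq e^{c_0 \log^2 d / \log\log d}$ for large $d$, since $d^{2\eta}$ grows polynomially in $d$ while $\log^2 d / \log\log d$ does not. The upper threshold of Theorem~\ref{thm_proj} is $e^{\sqrt{d}\log d}$, which does dominate $e^{cd^{2\eta}\log d}$ (as $2\eta < 1/2$), so the projection-estimator part of your argument is fine, but the analogy you draw with Theorem~\ref{thm_ls} breaks down.

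The fix is to also invoke the \emph{second} regime of Theorem~\ref{thm_ls} for the portion $e^{c_0 \log^2 d/\log\log d} \leq n \leq e^{cd^{2\eta}\log d}$, using the same computation you already carry out for the product-measure case. In that range, $\log n \lesssim d^{2\eta}\log d$ gives $\alpha = \log(C_0 \log n)/\log d \leq 2\eta + o(1)$, so one may take $\beta$ to be any fixed universal constant in $(2\eta, 1/2)$ (possible since $\eta < 1/4$ is universal). The resulting degree satisfies $m \gtrsim (1-2\beta) m_0$, and the loss factor $(1-2\beta)^{-1}$ is then a universal constant, which is all the first bullet claims. The requirement $d \geq d(\beta)$ in the second regime is then also a universal constraint. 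With this patch, the rest of your argument --- the dominance of the $O(1/d)$ and $O(1/m^2)$ error terms, the application of Corollary~\ref{corollary_fano} with $\kappa = 10$, and the product-case calculation --- is correct and matches the implicit intent of the paper.
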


In comparison, typical regression algorithms for smooth functions -- such as nearest neighbors -- require a number of samples that is at least exponential in the dimension.   In contrast, our proposed algorithms attains the minimax rate in the high-dimensional regime, when the number of samples is merely subexponential in the dimension. As a concrete takeaway, consider learning a $1$-Lipschitz function from noisy observations in $L^2(\gamma)$, where we recall that $\gamma = \gamma_d$ is the standard Gaussian measure in $\RR^d$. In order to achieve accuracy up to a factor $\varepsilon>0$, it suffices to use a sample size that grows only polynomially with the dimension:
\[
    n \simeq d^{\frac{c}{\varepsilon}}
\]
for some constant $c>0$. To the best of our knowledge, this result is new already in the Gaussian case. Our approach is related to the recent works of Eskenazis, Ivanishvili and Streck (\cite{eskenazis2022learning}, \cite{eskenazis2022low}) on learning over the discrete hypercube, which rely on expansions in the orthonormal Walsh polynomial basis.

\medskip
The remainder of this paper is organized as follows:

\medskip
In Section~\ref{sec_logconcave}, we review several properties of log-concave measures that will be used throughout the paper. We recall concentration inequalities for Lipschitz and polynomial functions, and present polynomial approximation results for Lipschitz functions in $L^2(\mu)$. This background sets the stage for the statistical analysis.

\medskip
In Section~\ref{sec_algo}, we study in detail the two algorithms proposed for estimating Lipschitz functions: the projection estimator and the least-squares estimator. For both procedures we establish upper bounds on their $L^2(\mu)$ risk (Theorems~\ref{thm_proj} and \ref{thm_ls}).

\medskip
In Section~\ref{sec_entropy}, we turn to lower bounds. We provide new estimates on the metric entropy of the class of $1$-Lipschitz functions under isotropic log-concave measures (Theorem~\ref{thm_entropyLip}). As a consequence, we derive minimax lower bounds for the regression problem \eqref{eq138}, showing that in certain regimes the upper and lower bounds match (Corollary~\ref{corollary_fano}).

\section{Background on log-concave measures}\label{sec_logconcave}
In this section, we recall several properties of log-concave measures that are central to our study. We begin with the concentration properties of Lipschitz and polynomial functions under log-concave distributions. We then briefly review key facts about the Langevin semigroup associated with a log-concave measure. Finally, we discuss polynomial approximation of Lipschitz functions with respect to such measures.

\subsection{Concentration of Lipschitz and polynomial functions}
We recall that a probability measure $\mu$ on $\RR^d$ is log-concave if it takes the form
\begin{equation}
\label{eq_313}
    \mu(dx) = e^{-V(x)}\,dx
\end{equation}
for some convex function $V: \RR^d \to \RR \cup \{\infty\}$. If the measure is supported in an affine subspace of $\RR^d$,
we require that its density relative to this affine subspace will take the form (\ref{eq_313}) for some convex function $V$.
Gaussian measures, uniform distributions on convex bodies, and Dirac measures are all examples of log-concave probabilities. The convexity of $V$ is known to imply strong concentration properties for $\mu$.

\medskip
We say that $\mu$ satisfies a Poincar\'{e} inequality with constant $C > 0$ if, for all locally Lipschitz functions $f$,
\begin{equation}\label{eq226}
    \Var_\mu(f) \leq C \int \abs{\nabla f}^2 \, d\mu.
\end{equation}
The best constant $C > 0$ in the Poincar\'{e} inequality is denoted by $C_P(\mu)$ and referred to as the \emph{Poincar\'{e} constant} of $\mu$. Namely,
\[
C_P(\mu) := \sup_f \frac{\Var_\mu(f)}{\int \abs{\nabla f}^2 \, d\mu},
\]
where the supremum is taken over all locally Lipschitz, non-constant functions $f$. We also define
\[
C_P^{\mathrm{Lip}}(\mu) := \sup_{\substack{f\in \mathrm{Lip}_1}} \Var_\mu(f),
\]
where the supremum is over all $1$-Lipschitz functions $f$.
Our normalization assumption \eqref{eq182} rewrites as
\[
C_P^{\mathrm{Lip}}(\mu) = \Psi_\mu(0) = 1.
\]
It is clear from the definitions that
\[
C_P^{\mathrm{Lip}}(\mu) \leq C_P(\mu).
\]
However, a theorem of Emanuel Milman \cite{milman2009role} asserts that, for log-concave measures, these two quantities are equivalent up to a universal constant:
\begin{equation}
    C_P(\mu) \simeq C_P^{\mathrm{Lip}}(\mu).
\end{equation}
In other words, for log-concave measures, the Poincar\'{e} inequality is essentially saturated by Lipschitz functions.
The KLS conjecture, originally formulated in \cite{kannan1995isoperimetric}, proposes an even stronger statement: that the Poincar\'{e} inequality is actually saturated by linear functions. Namely, the trivial chain of inequalities
\[
\norm{\Cov(\mu)}_{\mathrm{op}} \leq C_P^{\mathrm{Lip}}(\mu) \leq C_P(\mu)
\]
could be reversed up to universal constants. The best known estimate to date is due to \cite{klartag2023logarithmic}:
\begin{equation}
    C_P(\mu) \lesssim \log n.
\end{equation}

A related but stronger functional inequality is the logarithmic Sobolev inequality. We say that $\mu$ satisfies a log-Sobolev inequality with constant $\rho > 0$ if, for all locally Lipschitz functions $f$,
\begin{equation}\label{493}
    \Ent_\mu(f^2) \leq 2\rho \int \abs{\nabla f}^2 \, d\mu.
\end{equation}
The best constant $\rho > 0$ for which this holds is denoted by $\rho_{LS}(\mu)$ and referred to as the \emph{log-Sobolev constant} of $\mu$.
It always holds that
\[
C_P(\mu) \leq \rho_{LS}(\mu),
\]
and the log-Sobolev inequality is strictly stronger than the Poincar\'{e} inequality, as it implies subgaussian concentration rather than merely subexponential (see Proposition~\ref{prop:lipschitz_concentration} below). In particular, unlike the Poincar\'{e} inequality, not all log-concave measures satisfy a log-Sobolev inequality. We refer to \cite{bizeul2023log} for further details.
As a central example, the standard Gaussian measure satisfies
\[
C_P(\gamma) = \rho_{LS}(\gamma) = 1.
\]

As mentioned before, a Poincar\'{e} inequality implies exponential concentration for Lipschitz functions, while a log-Sobolev inequality implies stronger subgaussian concentration. These facts were observed by Gromov–Milman \cite{gromov1983topological} (for Poincar\'{e}) and Herbst (for log-Sobolev), and are summarized in the following proposition.

\begin{proposition}\label{prop:lipschitz_concentration}
Let $\mu$ be a probability measure on $\RR^d$ and $f$ a $1$-Lipschitz function. There exists a universal constant $C > 0$ such that, for any $p \geq 1$,
\[
\norm{f}_{L^p(\mu)} \leq C p \sqrt{C_P(\mu)},
\]
\[
\norm{f}_{L^p(\mu)} \leq C \sqrt{p} \sqrt{\rho_{LS}(\mu)}.
\]
\end{proposition}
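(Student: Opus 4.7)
The plan is to establish the two bounds separately via the classical Gromov--Milman and Herbst arguments: in each case one controls the Laplace transform $L(\lambda) = \int e^{\lambda f}\,d\mu$, deduces a tail bound for $f$, and finally integrates $\|f\|_{L^p(\mu)}^p = p\int_0^\infty t^{p-1}\mu(|f|>t)\,dt$. Throughout, the only property of $f$ that intervenes is $|\nabla f| \leq 1$ almost everywhere, and without loss of generality I would assume that $f$ has been centered, $\int f\,d\mu = 0$, since the statement is scale-covariant in $f - \mathbb{E}_\mu f$ (otherwise the conclusion is vacuous).

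For the Poincar\'e part, the idea is to apply \eqref{eq226} to the test function $g = e^{\lambda f/2}$. Since $|\nabla g|^2 \leq (\lambda^2/4)\,g^2$, this gives the self-bounding relation
\[
L(\lambda) - L(\lambda/2)^2 \leq \frac{C_P(\mu)\lambda^2}{4}\,L(\lambda),
\]
so that for $\lambda \leq c/\sqrt{C_P(\mu)}$ with a small universal $c$, one has $L(\lambda) \leq 2\,L(\lambda/2)^2$. Iterating this inequality from $\lambda$ down to the region where $L(\lambda) \approx 1 + O(\lambda^2 C_P(\mu))$ by the second order Taylor expansion (and a standard limiting argument, using the centering) produces a universal bound $L(\lambda) \leq C$ at scale $\lambda \simeq 1/\sqrt{C_P(\mu)}$. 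Markov's inequality then yields the exponential tail
\[
\mu(|f| > t) \lesssim \exp\!\Bigl(-c\,t/\sqrt{C_P(\mu)}\Bigr),
\]
and integrating against $p\,t^{p-1}\,dt$ gives $\|f\|_{L^p(\mu)} \lesssim p\sqrt{C_P(\mu)}$, which is the first inequality.

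For the log-Sobolev part, Herbst's argument applies \eqref{493} directly to $g = e^{\lambda f/2}$. The entropy term rewrites as $\lambda L'(\lambda) - L(\lambda)\log L(\lambda)$, while the right-hand side is bounded by $(\rho_{LS}(\mu)\lambda^2/2)\,L(\lambda)$ using $|\nabla f|\leq 1$. Setting $K(\lambda) := \lambda^{-1}\log L(\lambda)$ turns this into the clean differential inequality $K'(\lambda) \leq \rho_{LS}(\mu)/2$. Since $K(0^+) = \int f\,d\mu = 0$, integration yields the subgaussian Laplace bound $L(\lambda) \leq e^{\rho_{LS}(\mu)\lambda^2/2}$, hence the Gaussian tail $\mu(|f|>t) \leq 2\exp(-t^2/(2\rho_{LS}(\mu)))$ by Chernoff. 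A second integration produces the sharper moment bound $\|f\|_{L^p(\mu)} \lesssim \sqrt{p\,\rho_{LS}(\mu)}$, completing the second inequality.

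The two arguments are classical and the only mildly delicate point is the base case of the iteration in the Poincar\'e step, where one must verify that $L(\lambda)$ is controlled for small $\lambda$ independently of the iteration depth; this can be handled by Taylor expanding $L$ at $0$ using $\Var_\mu(f) \leq C_P^{\mathrm{Lip}}(\mu)$ together with a continuity argument, or by simply invoking the Bobkov--Ledoux formulation of Gromov--Milman directly. No ingredient beyond \eqref{eq226} and \eqref{493} is needed.
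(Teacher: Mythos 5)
Your proposal correctly identifies the classical Gromov--Milman and Herbst arguments; the paper itself states Proposition~\ref{prop:lipschitz_concentration} without proof and just cites those sources, so this is exactly the argument being invoked. The Herbst step is complete: with $g=e^{\lambda f/2}$, the differential inequality $K'(\lambda)\le \rho_{LS}(\mu)/2$ with $K(0^+)=0$ does yield $L(\lambda)\le e^{\rho_{LS}\lambda^2/2}$, a Gaussian tail, and $\|f\|_{L^p}\lesssim\sqrt{p\,\rho_{LS}(\mu)}$ after integrating. You are also right that the statement only makes sense after centering $f$, which the paper leaves implicit (it applies the proposition to $\tilde f=f-\mathbb{E}_\mu f$).

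The one place where your Poincar\'e step is not quite tight as written is the iteration. The crude consequence $L(\lambda)\le 2\,L(\lambda/2)^2$ does not close by itself: iterating it $n$ times gives $L(\lambda)\le 2^{2^n-1}L(\lambda/2^n)^{2^n}$, and while $L(\lambda/2^n)^{2^n}\to e^{\lambda\,\mathbb{E}_\mu f}=1$, the accumulated prefactor $2^{2^n-1}$ diverges, so the bound becomes vacuous in the limit; stopping at a fixed $n$ does not help either, since there is no a priori universal bound on $L(\lambda/2^n)$ for fixed positive $\lambda/2^n$ without the estimate one is trying to prove. The fix is to retain the exact factor from the variance step, namely $L(\lambda)\le(1-\lambda^2 C_P/4)^{-1}L(\lambda/2)^2$ for $\lambda^2 C_P<4$, whose iterated product $\prod_{k\ge0}\bigl(1-\lambda^2 C_P/(4\cdot4^k)\bigr)^{-2^k}$ converges to a universal constant when, say, $\lambda^2 C_P\le1$, combined with a truncation of $f$ (to guarantee $L$ is finite and analytic so the $n\to\infty$ limit is legitimate) followed by monotone convergence. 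You allude to all of this in your final paragraph by pointing to the Bobkov--Ledoux formulation, which is indeed the clean way to do it, but the factor-of-two version you write in the body does not suffice on its own.
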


In particular, under our normalization \eqref{eq182}, the moments of a Lipschitz function grow at most linearly with $p$. This fact can be reformulated in the context of Orlicz norms. A random variable \( X \) is said to be \emph{sub-exponential} if there exists \( K > 0 \) such that
\[
\mathbb{E}[\exp(|X|/K)] \leq 2,
\]
and \emph{sub-Gaussian} if
\[
\mathbb{E}[\exp(X^2/K^2)] \leq 2.
\]
The smallest such constant \( K \) defines the Orlicz norms \( \|X\|_{\psi_1} \) and \( \|X\|_{\psi_2} \) respectively. A well-known equivalent definition of the Orlicz norm is $$ \| X \|_{\psi_{\alpha}} \simeq \sup_{m \geq 1} \frac{\| X \|_{m}}{m^{1/\alpha}}.  $$
Proposition \ref{prop:lipschitz_concentration} can be reformulated as :
\[
\|f(X)\|_{\psi_1} \leq C \sqrt{C_P(\mu)}, \quad \text{and} \quad \|f(X)\|_{\psi_2} \leq C \sqrt{\rho_{LS}(\mu)}.
\]
We recall the following standard Bernstein-type inequalities:
\begin{proposition}\label{prop:bernstein} Let $X_1,\ldots,X_n$ be independent centered random variables. Then
\[
\left\|\frac{1}{n}\sum_{i=1}^n X_i\right\|_{\psi_1} \lesssim \frac{1}{\sqrt{n}} \max_i \|X_i\|_{\psi_1}, \quad
\left\|\frac{1}{n}\sum_{i=1}^n X_i\right\|_{\psi_2} \lesssim \frac{1}{\sqrt{n}} \max_i \|X_i\|_{\psi_2},
\]
\end{proposition}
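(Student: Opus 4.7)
The plan is to reduce both inequalities to the classical sub-Gaussian/sub-exponential Chernoff calculation via moment generating functions, and then convert the resulting tail bound back into an Orlicz-norm estimate using the equivalent characterization $\|X\|_{\psi_\alpha}\simeq \sup_{m\geq 1}\|X\|_m/m^{1/\alpha}$ already recorded in the paper.

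First I would handle the $\psi_2$ case. Writing $K := \max_i \|X_i\|_{\psi_2}$, the defining property $\mathbb{E}[\exp(X_i^2/K^2)]\leq 2$ together with centering gives the standard bound $\mathbb{E}[\exp(\lambda X_i)] \leq \exp(C \lambda^2 K^2)$ for every $\lambda \in \RR$ and some universal $C>0$ (this is obtained by expanding the exponential into even moments and using $\|X_i\|_m \lesssim \sqrt{m}\,K$). By independence, the moment generating function of $S_n := \sum_{i=1}^n X_i$ is bounded by $\exp(C \lambda^2 n K^2)$, so Markov applied to $e^{\lambda S_n}$ and optimization over $\lambda$ yield
\[
\PP\!\left(|S_n| > t\right) \leq 2\exp\!\left(-\tfrac{c t^2}{n K^2}\right) \qquad (t>0).
\]
Integrating the tail gives $\mathbb{E}[\exp(S_n^2/(C' n K^2))]\leq 2$, hence $\|S_n\|_{\psi_2}\lesssim \sqrt{n}\,K$, and dividing by $n$ proves the second inequality.

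For the $\psi_1$ case, let $K := \max_i \|X_i\|_{\psi_1}$. The analogous centered bound is now valid only in a neighborhood of the origin: one shows $\mathbb{E}[\exp(\lambda X_i)] \leq \exp(C\lambda^2 K^2)$ for all $|\lambda| \leq c/K$, again by expanding and using $\|X_i\|_m \lesssim mK$. Taking the product, optimizing $\lambda$ over the allowed range, and splitting the optimum according to whether $t \leq n K$ or $t > n K$ produces the two-regime Bernstein tail
\[
\PP\!\left(|S_n|>t\right) \leq 2\exp\!\left(-c\min\!\Bigl(\tfrac{t^2}{n K^2},\tfrac{t}{K}\Bigr)\right).
\]
Applied to $S_n/n$ this becomes $\PP(|S_n/n|>s)\leq 2\exp(-c\min(n s^2/K^2, n s/K))$. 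With $\lambda := C K/\sqrt{n}$ one checks that on the range $s\in[0,K]$ the sub-Gaussian factor dominates and $s/\lambda \leq cns^2/K^2$, while on $s\geq K$ the sub-exponential factor dominates and $s/\lambda \leq c n s/K$; integrating gives $\mathbb{E}[\exp(|S_n/n|/\lambda)]\leq 2$, i.e.\ $\|S_n/n\|_{\psi_1}\lesssim K/\sqrt{n}$.

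There is no real obstacle: both steps are textbook Bernstein/Hoeffding arguments, the only thing to be a bit careful about is the $\psi_1$ case, where the moment generating function only exists locally and one has to split the tail bound into a sub-Gaussian regime and a sub-exponential regime before converting back to the Orlicz norm. The centering assumption is used exactly to kill the linear $\lambda$ term in the Taylor expansion of $\log\mathbb{E}[e^{\lambda X_i}]$, which is what makes the leading contribution in $\lambda$ quadratic.
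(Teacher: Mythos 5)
The paper gives no proof of this proposition at all; it simply cites Vershynin's book for background. Your proof therefore fills a gap rather than deviates from the paper, and the route you take — MGF bound, Chernoff/Bernstein tail, reconversion to the Orlicz norm — is exactly the standard textbook derivation that the reference is implicitly invoking.

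Your $\psi_2$ argument is clean. In the $\psi_1$ case the idea is right but one claimed inequality is not literally true: with $\lambda = CK/\sqrt n$ the comparison $s/\lambda \le c\,n s^2/K^2$ fails whenever $s < K/(cC\sqrt n)$, so it does not hold on all of $[0,K]$ as you assert. This is harmless because the regime of small $s$ is trivial — either observe that $\PP(|S_n/n|>s)\le 1\le 2e^{-s/\lambda}$ already holds for $s\le \lambda\log 2$, and your comparison kicks in only beyond that threshold (choosing $C$ large enough), or simply bound the integral $\int_0^\infty \frac1\lambda e^{s/\lambda}\,\PP(|S_n/n|>s)\,ds$ directly: on $[0,K]$ the substitution $v = s\sqrt n/K$ reduces it to $\frac{2}{C}\int_0^\infty e^{v/C - cv^2}\,dv = O(1/C)$, and on $[K,\infty)$ the sub-exponential exponent dominates for $C>1/c$ and $n\ge 1$, giving an exponentially small contribution. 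Either patch makes the argument airtight; as written, the sentence ``one checks that on $s\in[0,K]$ ... $s/\lambda \le c n s^2/K^2$'' should be restricted to $s\ge \lambda\log 2$ or replaced by the direct integral estimate.
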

We refer to \cite{vershynin2018high} for background on subexponential and subgaussian distributions.
As for polynomials, when the underlying measure is log-concave, we have the following estimates.
\begin{proposition}\label{prop:poly_concentration_logconcave}
Let $\mu$ be a log-concave measure on $\RR^d$, and let $P$ be a degree-$m$ polynomial.
Abbreviate $\norm{P}_q = \| P \|_{L^q(\mu)}$.
Then there exists a universal constant $C > 0$ such that, for any $q \geq 2$,
\[
\norm{P}_q \leq \min \left( C^{(q-2)m},\, (Cq)^m \right) \norm{P}_2.
\]
Moreover, for any $q \geq 1$, there exists $C_1 > 0$ such that
\[
\norm{P}_q \leq (C_1 q)^m \norm{P}_1.
\]
\end{proposition}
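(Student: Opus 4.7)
The plan is to reduce the proposition to the classical Bourgain/Carbery--Wright/Nazarov--Sodin--Volberg moment comparison theorem for polynomials under log-concave measures, which I would quote as a black box: for any log-concave $\mu$ on $\RR^d$, any polynomial $P$ of degree $m$, and all $q \geq 2$,
\begin{equation}\label{eq:polystar}
\|P\|_q \leq (Cq)^m \|P\|_2
\end{equation}
for some universal constant $C>0$. Its proof (which I would not redo here) combines the Carbery--Wright small-ball estimate $\mu(\{|P|\leq s\|P\|_2\}) \lesssim m s^{1/m}$ with Borell's reverse-H\"older lemma to show that $|P(X)|^{1/m}$ is subexponential at the scale $\|P\|_2^{1/m}/m$. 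The hidden $m$-dependence in this scale is precisely what cancels the $m^m$ factor produced by Stirling's formula when converting tail bounds into $L^q$-norms via $\Gamma(qm)$, and is what makes the constant $C$ in \eqref{eq:polystar} independent of the degree --- this is the main technical subtlety hidden inside the black box.

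Given \eqref{eq:polystar}, the $(Cq)^m\|P\|_2$ half of the $\min$ is immediate. For the complementary bound $C^{(q-2)m}\|P\|_2$ I would proceed by interpolation. Fixing $q_0 = 4$, \eqref{eq:polystar} yields $\|P\|_{q_0} \leq K^m\|P\|_2$ with $K = 4C$. For $q \in [2, q_0]$, log-convexity of $r \mapsto \log\|P\|_r$ gives
\[
\|P\|_q \leq \|P\|_2^{1-\theta}\,\|P\|_{q_0}^\theta, \qquad \theta = \frac{q_0(q-2)}{q(q_0-2)} \leq q-2,
\]
so that $\|P\|_q \leq K^{m(q-2)}\|P\|_2$. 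For $q > q_0$, the bound \eqref{eq:polystar} already dominates $K^{m(q-2)}$ (since $q \mapsto K^{q-2}$ grows faster than $q \mapsto Cq$), so taking the $\min$ in the statement handles both regimes with a single universal constant.

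For the inequality $\|P\|_q \leq (C_1 q)^m\|P\|_1$, I would first apply \eqref{eq:polystar} at $q = 3$ to obtain $\|P\|_3 \leq (3C)^m\|P\|_2$, and then invoke the H\"older decomposition $\|P\|_2^2 = \EE[|P|^{1/2}\cdot|P|^{3/2}] \leq \|P\|_1^{1/2}\|P\|_3^{3/2}$, which, once solved for $\|P\|_2$, yields the reverse-H\"older bound $\|P\|_2 \leq (3C)^{3m}\|P\|_1$. Substituting back into \eqref{eq:polystar} delivers $\|P\|_q \leq (Cq)^m(3C)^{3m}\|P\|_1 = (C_1 q)^m\|P\|_1$ for a suitable $C_1$; the range $q \in [1,2]$ follows trivially from $\|P\|_q \leq \|P\|_2$. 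The main obstacle throughout is \eqref{eq:polystar}: once it is in hand, every remaining step is a routine interpolation or H\"older argument.
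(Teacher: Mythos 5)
Your proof is correct and follows essentially the same route as the paper: quote Bourgain's moment comparison $\|P\|_q\leq (Cq)^m\|P\|_2$ as the black box, then interpolate between $L^2$ and $L^4$ (via log-convexity of $r\mapsto\log\|P\|_r$, which is the same Hölder step the paper uses) to obtain the $C^{(q-2)m}$ bound near $q=2$. The only small difference is cosmetic: the paper cites Bourgain/NSV directly for the $L^1$-to-$L^q$ version $\|P\|_q\leq (C_2q)^m\|P\|_1$, whereas you derive it from the $L^2$ version by the reverse-Hölder argument $\|P\|_2^2\leq\|P\|_1^{1/2}\|P\|_3^{3/2}$; both are standard and correct, and yours is marginally more self-contained.
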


\begin{proof}
The inequality
\[
\norm{P}_q \leq (C_1 q)^m \norm{P}_2 \leq (C_2 q)^m \norm{P}_1
\]
was essentially established by Bourgain \cite{bourgain2006distribution}, see also \cite{NSV}. It remains to interpolate for $q$ close to $2$. We may assume without loss of generality that $\norm{P}_2 = 1$. By Hölder's inequality, for $2 \leq q \leq 4$,
\[
\norm{P}_q^q \leq \norm{P}_2^{4 - q} \norm{P}_4^{q - 2} \leq C^{m(q - 2)},
\]
which concludes the proof.
\end{proof}
We remark that the following improvement holds when $\mu = \gamma$, the standard Gaussian measure (see \cite[Proposition 5.48]{aubrun2017alice}):

\begin{lemma}
Let $P$ be a degree-$m$ polynomial on $\RR^d$. Then
\[
\norm{P}_{L^q(\gamma)} \leq (q - 1)^{m/2} \norm{P}_{L^2(\gamma)}.
\]
\end{lemma}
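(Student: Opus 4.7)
The natural approach is via Nelson's hypercontractivity theorem applied to the Ornstein--Uhlenbeck semigroup $(P_t)_{t \geq 0}$ on Gaussian space. Recall that Nelson's inequality states that for $1 < p \leq q < \infty$ and any $t \geq 0$ satisfying $e^{2t} \geq (q-1)/(p-1)$, one has
\[
\norm{P_t g}_{L^q(\gamma)} \leq \norm{g}_{L^p(\gamma)}
\]
for every $g \in L^p(\gamma)$. The key compatibility with polynomials comes from the spectral decomposition: $P_t$ acts diagonally on the Hermite basis via $P_t H_\alpha = e^{-|\alpha| t} H_\alpha$, so it preserves the space $\cP_{d,m}$ of polynomials of degree at most $m$ and contracts each homogeneous chaos by a factor depending only on the degree.

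The plan is as follows. First, specialize Nelson's inequality to $p = 2$ and choose $t$ so that $e^{2t} = q-1$, which is legitimate since $q \geq 2$. Second, expand $P$ in the Hermite basis as $P = \sum_{|\alpha| \leq m} c_\alpha H_\alpha$ and define the polynomial
\[
g := \sum_{|\alpha| \leq m} e^{|\alpha| t} c_\alpha H_\alpha,
\]
so that $P_t g = P$ by the eigenfunction property. Third, apply hypercontractivity to conclude
\[
\norm{P}_{L^q(\gamma)} = \norm{P_t g}_{L^q(\gamma)} \leq \norm{g}_{L^2(\gamma)}.
\]
Finally, bound the $L^2$ norm of $g$ using orthonormality of the Hermite basis and the degree cutoff:
\[
\norm{g}_{L^2(\gamma)}^2 = \sum_{|\alpha| \leq m} e^{2|\alpha| t} c_\alpha^2 \leq e^{2mt} \sum_{|\alpha| \leq m} c_\alpha^2 = (q-1)^m \norm{P}_{L^2(\gamma)}^2,
\]
which yields the claim after taking square roots.

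There is essentially no obstacle, since the whole argument is a direct application of Nelson's theorem combined with the spectral behavior of $P_t$ on Hermite chaoses; the only thing to verify carefully is that the ``inverse'' polynomial $g$ is well-defined (which is immediate because $P$ has only finitely many nonzero Hermite coefficients, so multiplying each by $e^{|\alpha|t}$ produces another polynomial of degree at most $m$). No boundary or approximation issues arise, and the constant $(q-1)^{m/2}$ is sharp already for the one-dimensional monomial case, as one checks using standard $L^q$ norms of Hermite polynomials.
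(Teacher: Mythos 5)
Your proof is correct, and it is the standard Nelson hypercontractivity argument: invert the Ornstein--Uhlenbeck semigroup on the Hermite expansion, apply the $(2,q)$-hypercontractive bound at time $t$ with $e^{2t}=q-1$, and control the prefactor $e^{|\alpha|t}\le e^{mt}$ using the degree cutoff. The paper does not prove the lemma itself but merely cites Aubrun--Szarek, Proposition~5.48, which uses precisely this argument; the only slight imprecision in your write-up is the claim of exact sharpness for the one-dimensional monomial (e.g.\ $\|x\|_{L^4(\gamma)}=3^{1/4}<\sqrt{3}$), though this remark is tangential and does not affect the proof.
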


We will also need a classical anti-concentration result for polynomials in log-concave variables, due to Carbery and Wright (\cite[Theorem 8]{carbery2001distributional})

\begin{theorem}\label{thm:carbery}
Let $X$ be a log-concave random vector in $\RR^d$, and let $P$ be a polynomial of degree at most $m$ such that $\EE P^2(X) = 1$. Then for all $t > 0$,
\[
\PP\left( \abs{P(X)} \leq t \right) \leq C m t^{1/m},
\]
where $C > 0$ is a universal constant.
\end{theorem}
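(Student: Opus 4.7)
My plan would be to follow the approach of Carbery and Wright: reduce the multidimensional anti-concentration estimate to a one-dimensional statement via slicing. The first step is to reformulate the claim. By the layer-cake formula in one direction and Markov's inequality in the other, the tail bound $\PP(|P(X)| \leq t) \leq Cmt^{1/m}$ is equivalent, up to universal constants, to a reverse-Hölder-type negative-moment inequality of the form
\[
\left(\EE |P(X)|^{-\alpha}\right)^{1/\alpha} \leq K^m \left(\EE P(X)^2\right)^{1/2}
\]
for every $\alpha \in (0,1/(2m))$, with constants depending mildly on $1/m - \alpha$. The negative-moment form is much better suited to induction because it plays well with Fubini and Hölder: under slicing and conditioning, constants multiply cleanly instead of being distorted by a truncation.

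\textbf{One-dimensional estimate.} For a log-concave probability $\nu$ on $\RR$ with density $f$ and a univariate polynomial $p(s) = a\prod_{i=1}^{k}(s-\lambda_i)$ of degree $k \leq m$, the sublevel set $\{|p| \leq t\}$ is contained in a union of at most $k$ intervals around the roots, of total length bounded by $Cm(t/|a|)^{1/m}$ by a direct factorization argument. Since a one-dimensional log-concave probability density satisfies $\|f\|_\infty \lesssim 1/\sigma_\nu$, one obtains
\[
\nu\!\left\{|p|\leq t\right\} \lesssim \frac{m}{\sigma_\nu}\left(\frac{t}{|a|}\right)^{1/m}.
\]
Relating $|a|^{-1}$ to $\|p\|_{L^2(\nu)}^{-1}$ via the reverse-Hölder inequalities for polynomials established in Proposition~\ref{prop:poly_concentration_logconcave} then yields the univariate version of the negative-moment bound.

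\textbf{Reduction to one dimension.} For a log-concave $X$ in $\RR^d$, pick a coordinate direction, say $e_1$, and disintegrate $\mu$ along lines parallel to it. Conditionally on $X^\perp := (X_2,\ldots,X_d)$, the law of $X_1$ is log-concave on $\RR$ (slices of log-concave densities are log-concave), and $P(\cdot, X^\perp)$ is a polynomial of degree $\leq m$ in $X_1$ whose leading coefficient $a(X^\perp)$ is itself a polynomial of degree $\leq m$ in $X^\perp$. Applying the 1D estimate conditionally gives
\[
\EE\!\left[|P(X)|^{-\alpha} \mid X^\perp\right] \lesssim (Cm)^{m\alpha}\, |a(X^\perp)|^{-\alpha}.
\]
Taking expectations in $X^\perp$ reduces the task to controlling a negative moment of a polynomial of degree $\leq m$ in $d-1$ log-concave variables, which is handled inductively by the same scheme applied to $X^\perp$.

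\textbf{Main obstacle.} The principal difficulty is ensuring that the constant in the negative-moment bound remains independent of the ambient dimension $d$. A naive induction would multiply a factor of $C^{m\alpha}$ at each of the $d$ slicing steps, producing a disastrous $C^{dm\alpha}$ constant. My plan, following Carbery and Wright, would be to sidestep this by phrasing the bound in terms of an $L^2$-normalized, scale-free concentration function and proving that it satisfies a subadditivity property under slicing — the only invariant that appears in the estimate is then the polynomial degree $m$, not the dimension $d$. Once this subadditivity is in hand, the $d$-dimensional negative-moment bound $(Cm)^{m\alpha}$ follows from the one-dimensional case by induction, and unwinding the reverse-Hölder formulation via Markov's inequality at $\alpha$ close to $1/m$ yields the claimed tail estimate $\PP(|P(X)|\leq t) \leq Cmt^{1/m}$.
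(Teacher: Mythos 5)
The paper does not prove this statement; it is quoted from Carbery--Wright \cite{carbery2001distributional} (their Theorem~8) with a citation and no argument, so the benchmark is whether your reconstruction of their proof would actually close.

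The one-dimensional estimate you sketch is fine: writing $p(s)=a\prod_{i}(s-\lambda_i)$, the sublevel set $\{|p|\leq t\}$ sits inside a union of $k$ intervals of radius $(t/|a|)^{1/k}$, and a one-dimensional log-concave density of standard deviation $\sigma_\nu$ has $\|f\|_\infty\lesssim 1/\sigma_\nu$, which gives the stated bound. The gap is in the dimensional reduction. You slice coordinate-by-coordinate, conditioning on $X^\perp$ and then treating $a(X^\perp)$ as a new polynomial of degree $\leq m$ in $d-1$ variables, and you correctly identify that a naive pass through $d-1$ such conditionings multiplies a constant at each stage. But the proposed fix --- a ``scale-free concentration function satisfying a subadditivity property under slicing'' --- is a placeholder, not an argument: the leading coefficient $a(X^\perp)$ and the conditional standard deviation $\sigma_{\nu(X^\perp)}$ both fluctuate with the slice in ways that do not telescope, and nothing in the sketch controls their joint distribution. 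Carbery and Wright avoid this entirely: rather than recursing on coordinates, they invoke the Lov\'asz--Simonovits localization lemma (equivalently, a needle decomposition) to pass from the $d$-dimensional negative-moment inequality to a genuinely one-dimensional extremal problem in a single step, so the constant picks up exactly one factor, not $d$ of them. (The Nazarov--Sodin--Volberg route \cite{NSV} gives another dimension-free reduction, again not coordinate-wise.) Without importing one of these reductions, the induction you describe does not close, and the constant would degrade with $d$.

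A secondary issue: the step ``relating $|a|^{-1}$ to $\|p\|_{L^2(\nu)}^{-1}$ via the reverse-H\"older inequalities of Proposition~\ref{prop:poly_concentration_logconcave}'' is not right. That proposition compares $L^q$ norms of a fixed polynomial under a fixed log-concave measure; it says nothing about recovering a top coefficient from the $L^2$ norm, and indeed $|a|/\|p\|_{L^2(\nu)}$ depends on $\sigma_\nu$ and on root locations. Carbery--Wright sidestep this by proving a normalized inequality comparing $\int |p|^{q/m}\,d\mu$ to $\mu\{|p|\leq t\}$ directly, never isolating the leading coefficient on a slice-by-slice basis.
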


\subsection{Langevin semigroup}

We now briefly recall some basic facts about the semigroup associated with a log-concave measure. For a detailed exposition, we refer to \cite{bakry2013analysis}. Let $\mu$ be a log-concave probability measure on $\RR^d$ with density
\[
\mu(dx) = e^{-V(x)}\,dx
\]
for a convex $V: \RR^d \rightarrow \RR$. The probability measure $\mu$
is associated with the symmetric diffusion operator
\begin{equation}
    L := \Delta - \nabla V \cdot \nabla,
\end{equation}
which satisfies, for smooth, compactly-supported functions $f,g: \RR^d \rightarrow \RR$,
\begin{equation}\label{eq597}
    \int f\, Lg \, d\mu = \int g\, Lf \, d\mu = -\int \nabla f \cdot \nabla g \, d\mu.
\end{equation}
Consider the Friedrich self-adjoint extension of the operator $L$ to a self-adjoint operator on $L^2(\mu)$,
which is also denoted by $L$. The corresponding semigroup is
\begin{equation}
    T_t := e^{tL} \qquad \qquad \qquad (t \geq 0).
\end{equation}
This semigroup is Markovian and admits an explicit probabilistic representation: if $(X_t)_{t \geq 0}$ solves the stochastic differential equation
\[
dX_t = \sqrt{2}\, dB_t - \nabla V(X_t)\, dt,
\]
where $(B_t)$ is standard Brownian motion in $\RR^d$, then $(X_t)$ is a Markov process, and
\[
T_t f(x) = \EE [f(X_t) \mid X_0 = x].
\]
It follows that $T_t$ is a contraction on $L^p(\mu)$ for all $p \geq 1$.
The semigroup $T_t$ has been widely used to establish functional inequalities for $\mu$, since it continuously interpolates between $T_0 f = f$ and $T_\infty f = \int f\, d\mu$. The rate at which $T_t f$ converges to the mean is governed by the gradient of $f$:

\begin{lemma}\label{lem309}
Let $f \in L^2(\mu)$ be a smooth function with $\int_{\RR^d} |\nabla f|^2 d \mu < \infty$. Then
\[
\norm{T_t f}_{L^2(\mu)}^2 \geq \norm{f}_{L^2(\mu)}^2 - 2t \int \abs{\nabla f}^2 \, d\mu.
\]
\end{lemma}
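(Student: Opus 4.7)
The natural approach is to differentiate $\phi(t) := \norm{T_t f}_{L^2(\mu)}^2$ in $t$, use the integration-by-parts identity \eqref{eq597}, and show that the Dirichlet energy is non-increasing along the semigroup. Set $u_t := T_t f$. Since $\partial_t u_t = L u_t$ and $L$ is self-adjoint with respect to $\mu$, the identity \eqref{eq597} (applied with $g = u_t$, $f = u_t$) yields
\[
\phi'(t) \;=\; 2 \int u_t \, L u_t \, d\mu \;=\; -2 \int |\nabla u_t|^2 \, d\mu \;=\; -2\,\cE(u_t),
\]
where I write $\cE(g) := \int |\nabla g|^2 \, d\mu$ for the Dirichlet energy.

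Next I would establish that $t \mapsto \cE(u_t)$ is non-increasing. Differentiating in $t$ and applying \eqref{eq597} a second time, this time with $g = u_t$ and $f = L u_t$, gives
\[
\frac{d}{dt}\cE(u_t) \;=\; 2 \int \nabla u_t \cdot \nabla(L u_t)\, d\mu \;=\; -2 \int (L u_t)^2 \, d\mu \;\leq\; 0.
\]
Consequently $\cE(u_t) \leq \cE(u_0) = \int |\nabla f|^2 \, d\mu$ for every $t \geq 0$. Combined with the previous display this gives $\phi'(s) \geq -2 \int |\nabla f|^2 d\mu$ uniformly in $s$, and integrating on $[0,t]$ produces
\[
\norm{T_t f}_{L^2(\mu)}^2 - \norm{f}_{L^2(\mu)}^2 \;\geq\; -2t \int |\nabla f|^2\, d\mu,
\]
which is exactly the claim.

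The only point that requires care is justifying the two integration-by-parts steps for a general smooth $f$ with finite Dirichlet energy, since \eqref{eq597} is stated for compactly supported smooth functions. The standard remedy is to work with the Friedrichs self-adjoint extension introduced in the paper: smooth compactly supported functions are a form core, and both sides of the target inequality are continuous in $f$ with respect to the graph norm $\norm{f}_{L^2(\mu)}^2 + \cE(f)$, so a density argument transfers the identity to all admissible $f$. Alternatively, one can invoke the spectral calculus representation $\phi(t) = \int_0^\infty e^{-2\lambda t}\, d\langle E_\lambda f, f\rangle$, from which both $\phi'(t) = -2\cE(u_t)$ and the monotonicity of $\cE(u_t)$ are transparent. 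I do not view this technical step as a serious obstacle; the real content of the lemma is the two-line energy identity above.
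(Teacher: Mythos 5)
Your argument is correct, but it diverges from the paper's proof at the second step. Both proofs begin identically by computing $\tfrac{d}{dt}\norm{T_tf}_{L^2(\mu)}^2 = -2\int|\nabla T_t f|^2\,d\mu$. The paper then bounds the Dirichlet energy of $T_tf$ by invoking the Bakry--\'Emery gradient commutation inequality $|\nabla T_t f|^2 \leq T_t|\nabla f|^2$, which holds precisely because $V$ is convex (non-negative curvature), and concludes using the $L^1$-contractivity of $T_t$. You instead differentiate the Dirichlet energy itself, getting $\tfrac{d}{dt}\int|\nabla T_tf|^2\,d\mu = -2\int(LT_tf)^2\,d\mu \leq 0$, and then integrate the resulting monotonicity. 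Your route is purely spectral: it uses only the self-adjointness of $L$ (equivalently, the spectral representation $\phi(t)=\int_0^\infty e^{-2\lambda t}\,d\langle E_\lambda f,f\rangle$ you mention at the end) and therefore works for \emph{any} symmetric Markov diffusion semigroup, with no log-concavity assumption whatsoever. The paper's route, by contrast, is the standard Bakry--\'Emery computation and leans on the curvature condition; it is no shorter here, but it is the technique the authors reuse elsewhere. Both are valid, and your version is slightly more general. One small remark on rigor: when you pass from $\tfrac{d}{dt}\mathcal{E}(u_t)\leq 0$ for $t>0$ to $\mathcal{E}(u_t)\leq\mathcal{E}(u_0)=\int|\nabla f|^2\,d\mu$, you should note that $\mathcal{E}(u_s)\to\mathcal{E}(f)$ as $s\downarrow 0$ (monotone convergence in the spectral representation), since the second differentiation is only justified for $t>0$; you gesture at this via the density/spectral-calculus remark, which suffices.
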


\begin{proof} The argument is standard, we sketch the computation:
\begin{align*}
\frac{d}{dt} \norm{T_t f}_{L^2(\mu)}^2 &= 2 \langle L T_t f,\, T_t f \rangle_{L^2(\mu)} \\
&= -2 \int \abs{\nabla T_t f}^2 \, d\mu \\
&\geq -2 \int T_t \abs{\nabla f}^2 \, d\mu \\
&\geq -2 \int \abs{\nabla f}^2 \, d\mu,
\end{align*}
where we used the standard gradient bound that follows from log-concavity
\[
\abs{\nabla T_t f}^2 \leq T_t \abs{\nabla f}^2,
\]
and the fact that $T_t$ is a contraction.
\end{proof}

Since $T_t$ acts as a local averaging operator, one may expect smoothing properties. It is well-known e.g. \cite{KL_bulletin} that $T_t$ maps bounded functions to Lipschitz functions:

\begin{lemma} \label{lem1162}
For every bounded function $f$ and any $t > 0$, we have
\[
\norm{T_t f}_{\mathrm{Lip}} \leq \frac{\norm{f}_\infty}{\sqrt{t}}.
\]
\end{lemma}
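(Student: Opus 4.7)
The plan is to exploit the $CD(0,\infty)$ condition (i.e., log-concavity of $V$) via a standard Bakry--\'Emery interpolation between $T_t(f^2)$ and $(T_tf)^2$. First I would compute, for $s\in[0,t]$, the derivative
\[
\frac{d}{ds}\,T_s\bigl(T_{t-s}f\bigr)^2 \;=\; T_s L (T_{t-s}f)^2 \,-\, 2\,T_s\bigl(T_{t-s}f\cdot L T_{t-s}f\bigr) \;=\; 2\,T_s|\nabla T_{t-s}f|^2,
\]
which follows from the carr\'e-du-champ identity $L(g^2)-2gLg = 2|\nabla g|^2$ together with $\partial_s T_{t-s}f = -LT_{t-s}f$. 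Integrating from $0$ to $t$ gives the identity
\[
T_t(f^2) - (T_tf)^2 \;=\; 2\int_0^t T_s|\nabla T_{t-s}f|^2\,ds.
\]

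Next I would invoke the log-concavity gradient bound $|\nabla T_r g|^2\leq T_r|\nabla g|^2$ already recalled in the proof of Lemma~\ref{lem309}, applied with $r=s$ and $g=T_{t-s}f$. This yields the pointwise inequality
\[
|\nabla T_tf|^2 \;=\; \bigl|\nabla T_s(T_{t-s}f)\bigr|^2 \;\leq\; T_s|\nabla T_{t-s}f|^2 \qquad \text{for every } s\in[0,t].
\]
Substituting this bound into the integral identity gives, at each point $x\in\RR^d$,
\[
T_t(f^2)(x) - (T_tf)^2(x) \;\geq\; 2t\,|\nabla T_tf(x)|^2.
\]

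Finally, since $T_t$ is Markovian (a contraction on $L^\infty$), the left-hand side is at most $\|T_t(f^2)\|_\infty \leq \|f\|_\infty^2$, as the subtracted term is nonnegative. Taking square roots gives $|\nabla T_tf(x)|\leq \|f\|_\infty/\sqrt{2t}$ at every $x$, which is in fact stronger than the stated bound $\|f\|_\infty/\sqrt{t}$. The only genuine technical point is that $f$ is merely bounded, so a priori $\nabla T_tf$ and the carr\'e-du-champ manipulations need justification. This is handled by a routine density/regularization argument: approximate $f$ by smooth compactly supported functions $f_n$, apply the chain of inequalities above to $T_{t/2}f_n$ (which is smooth and has a controlled gradient), and pass to the limit using that $T_t$ is a contraction on $L^\infty$. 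I expect this regularization to be the main, though purely technical, obstacle; the heart of the argument is the two-line Bakry--\'Emery calculation combined with log-concavity.
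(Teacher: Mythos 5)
The paper does not supply a proof of Lemma~\ref{lem1162}; it simply cites the statement as well-known (from~\cite{KL_bulletin}). Your argument is the standard Bakry--\'Emery interpolation that underlies that reference, and it is correct. The derivative computation $\frac{d}{ds}T_s(T_{t-s}f)^2 = 2\,T_s|\nabla T_{t-s}f|^2$ is valid, the application of the commutation inequality $|\nabla T_s g|^2 \le T_s|\nabla g|^2$ (already used in the proof of Lemma~\ref{lem309}) together with the semigroup identity $T_sT_{t-s}=T_t$ correctly lower-bounds the integrand by $|\nabla T_tf|^2$, and the Markov/$L^\infty$-contraction bound on $T_t(f^2)$ closes the argument. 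Your version actually gives the sharper constant $\|T_tf\|_{\mathrm{Lip}}\le \|f\|_\infty/\sqrt{2t}$, which implies the stated bound. Your final remark on the regularization step is appropriate: the carr\'e-du-champ identity and the gradient commutation require some a priori smoothness, which is standard to obtain by approximating $f$ in $L^\infty$ by smooth compactly supported functions and using that $T_t$ is a contraction on $L^\infty$ (and smoothing for $t>0$). No gaps.
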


\subsection{Polynomial approximation of Lipschitz functions}

\subsubsection{Dimension 1}

For a log-concave measure $\mu$ on $\RR$, we define $\Psi_\mu(m)$ as the best function such that, for any $1$-Lipschitz function $f$ and integer $m \geq 1$,
\begin{equation}\label{eq653}
    E_m(\mu, f) := \inf_{\deg(P_m) \leq m} \norm{f - P_m}_{L^2(\mu)} \leq \Psi_\mu(m).
\end{equation}

We begin with a well-known qualitative result.

\begin{proposition}
Let $\mu$ be a measure on $\RR^d$ such that there exists $\varepsilon > 0$ such that for all $\theta \in B_2(0, \varepsilon)$,
\[
\int e^{\theta \cdot x} \, \mu(dx) < \infty.
\]
 Then polynomials are dense in $L^2(\mu)$. Moreover, the convergence is uniform over the class $\cF_{\mathrm{Lip}}$ of $1$-Lipschitz functions:
\[
E_m(\mu, \cF) := \sup_{f \in \cF} E_m(f, \mu) \longrightarrow 0 \quad \text{as } m \to \infty.
\]
\end{proposition}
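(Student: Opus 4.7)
The plan is to establish the two parts of the proposition separately: first that polynomials are dense in $L^2(\mu)$, then that this approximation is uniform over the class of $1$-Lipschitz functions.

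For density, I would use the classical Laplace-transform argument. Suppose $g \in L^2(\mu)$ is orthogonal to every monomial. Define
$$h(z) := \int_{\RR^d} e^{z \cdot x}\, g(x)\, d\mu(x), \qquad z \in \CC^d.$$
Using Cauchy--Schwarz together with the hypothesis $\int e^{\theta \cdot x} d\mu < \infty$ for $|\theta| < \varepsilon$, one checks that $h$ is well defined and holomorphic on a complex polydisk $\{|z_j| < \delta\}$ for $\delta$ small enough (differentiation under the integral being justified by dominated convergence, since $|e^{z \cdot x}| \le e^{|\Re(z)| \cdot |x|}$ can be dominated by an $L^1(\mu)$ function on the polydisk). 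The Taylor coefficients of $h$ at the origin are, up to factorials, the vanishing moments $\int x^\alpha g\, d\mu$, so $h \equiv 0$ on a neighborhood of $0$. Analytic continuation then gives $\widehat{g\mu}(t) = h(it) = 0$ for all $t \in \RR^d$, and hence $g = 0$ $\mu$-almost everywhere by Fourier uniqueness.

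For the uniform statement, I would first reduce to functions vanishing at the origin: since constants are polynomials of degree $0$, we have $E_m(f,\mu) = E_m(f-f(0),\mu)$, so it suffices to handle
$$\cF_0 := \{f:\RR^d \to \RR \ :\ f \text{ is } 1\text{-Lipschitz},\ f(0) = 0\},$$
whose elements satisfy $|f(x)| \le |x|$ pointwise. The strategy is to show that $\cF_0$ is totally bounded in $L^2(\mu)$ and then invoke a standard $\varepsilon$-net argument: cover $\cF_0$ by finitely many $L^2(\mu)$-balls of radius $\varepsilon$ around some $f_1,\dots,f_N$, use the density part to choose $m$ with $E_m(f_i,\mu) \le \varepsilon$ for every $i$, and conclude $E_m(f,\mu) \le 2\varepsilon$ uniformly in $f \in \cF_0$ by the triangle inequality. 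Total boundedness in turn splits into a tail and a bulk estimate. The MGF hypothesis gives $\int |x|^2 d\mu < \infty$, so for $R = R(\varepsilon)$ large we have $\|f \cdot \one_{|x| > R}\|_{L^2(\mu)} \le \| |x| \cdot \one_{|x|>R}\|_{L^2(\mu)} \le \varepsilon$ uniformly over $\cF_0$. On the ball $B(0,R)$ the restrictions of $\cF_0$ are equicontinuous (all are $1$-Lipschitz) and uniformly bounded (by $R$), so Arzel\`a--Ascoli makes them relatively compact in $C(B(0,R))$, hence totally bounded in the subspace $L^2(\mu,B(0,R))$; combining the two pieces yields total boundedness in $L^2(\mu)$.

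I do not anticipate a genuine obstacle. The only delicate point is the holomorphicity of $h$ on a complex polydisk, which requires choosing the polydisk so that $|\Re(z)|$ stays strictly inside the MGF convergence radius and then producing an $L^1(\mu)$ dominant for the integrand and its partial derivatives. Once this and the first-part density statement are in hand, the uniform convergence follows immediately from total boundedness by the compactness argument sketched above, with no dependence on the particular $1$-Lipschitz function.
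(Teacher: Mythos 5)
Your proof is correct and follows essentially the same strategy as the paper: a Laplace-transform/analyticity argument for density of polynomials, then compactness of the (normalized) $1$-Lipschitz class in $L^2(\mu)$ combined with an $\varepsilon$-net argument for uniformity. The paper simply asserts the compactness, whereas you spell out the total-boundedness via the tail/bulk split and Arzel\`a--Ascoli; the paper normalizes by centering ($\int f\,d\mu = 0$) while you normalize by $f(0)=0$, both equally valid since constants are polynomials.
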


\begin{proof}
Let $f \in L^2(\mu)$ be orthogonal to all polynomials, and define the signed measure $\mu_f(dx) = f(x)\mu(dx)$. By the Cauchy–Schwarz inequality, for $\theta$ small enough:
\[
\left( \int e^{\theta \cdot x} \, \mu_f(dx) \right)^2
\leq \left( \int f^2 \, d\mu \right) \left( \int e^{2\theta \cdot x} \, \mu(dx) \right) < \infty.
\]
Thus, $\mu_f$ admits a Laplace transform defined in a neighborhood of $0$, and all of its derivatives at the origin vanish due to the orthogonality condition. It follows that $\mu_f = 0$, hence $f = 0$ in $L^2(\mu)$. The uniform convergence follows by compactness of the set of centered $1$-Lipschitz functions in $L^2(\mu)$.
\end{proof}

In particular, since any log-concave probability measure satisfies the exponential integrability condition, we have
\[
\Psi_\mu(m) \longrightarrow 0
\]
as $m \to \infty$.

\medskip
We now turn to quantitative statements. A classical result in this direction is the quantitative Weierstrass approximation theorem, going back to Bernstein and to Jackson. It asserts that for a $1$-Lipschitz function $f$ on the interval $[-1, 1]$,
\begin{equation}\label{eq_jackson}
    \inf_{\deg(P) \leq m} \norm{f - P}_{L^\infty([-1,1])} \leq \frac{C}{m}.
\end{equation}
Let us describe in some detail how to obtain an $L^2$ version of this result. Let $\mu$ be the uniform probability measure on $[-1,1]$. The orthogonal polynomials with respect to $\mu$ are the Legendre polynomials,
\[
P_n(x) = \frac{1}{2^n n!} \frac{d^n}{dx^n} \left( x^2 - 1 \right)^n,
\]
which satisfy
\[
\int_{-1}^1 P_n(x) P_m(x) \, d\mu(x) = \frac{1}{2n + 1} \delta_{nm}.
\]
They also satisfy the differential equation
\begin{equation}\label{eq465}
    \frac{d}{dx} \left( (1 - x^2) P_n'(x) \right) + n(n + 1) P_n(x) = 0.
\end{equation}
Integrating by parts gives
\begin{equation}\label{eq366}
    \int_{-1}^1 P_n'(x) P_m'(x) (1 - x^2) \, dx = \frac{2n(n+1)}{2n+1} \delta_{nm}.
\end{equation}
Let us normalize the Legendre polynomials as
\[
p_n := \frac{P_n}{\sqrt{2n + 1}},
\]
so that $(p_n)$ forms an orthonormal basis in $L^2(\mu)$. Any function $f \in L^2(\mu)$ can be expanded as
\[
f = \sum_{k \geq 0} \langle f, p_k \rangle_{L^2(\mu)} p_k = \sum_{k \geq 0} f_k p_k.
\]
Observe that if $f' \in L^2((1 - x^2)\mu)$, then
\[
f' = \sum_{k \geq 1} f_k p_k',
\]
and by orthogonality using \eqref{eq366}, we obtain
\begin{equation}\label{eq376}
    \int (f')^2(1 - x^2) \, d\mu = \sum_{k \geq 1} k(k + 1) f_k^2.
\end{equation}
In particular,
\begin{equation}
    E_m^2(\mu, f) = \sum_{k \geq m + 1} f_k^2 \leq \frac{1}{(m + 1)(m + 2)} \int (f')^2(1 - x^2) \, d\mu.
\end{equation}
Following Jackson's theorem, an extensive body of work was devoted to the study of polynomial approximation on $\RR$ under more general probability measures (or "weights"). A good reference is the survey \cite{lubinsky2007survey}. Let us denote
\[
\mu_\alpha := \frac{1}{Z_\alpha} e^{-\abs{x}^\alpha},
\]
where $Z_\alpha$ is a normalizing constant. It can be shown that polynomials are dense in $L^2(\mu_\alpha)$ if and only if $\alpha \geq 1$. When $\alpha > 1$, it was proved by Freud \cite{freud1977markov} and Lubinsky and Levin \cite{levin1987canonical} that, for sufficiently regular functions $f$,
\begin{equation}\label{eq26}
    E_m^2(f, \mu_\alpha) \lesssim m^{2 - 2/\alpha} \int_\RR (f')^2 \, d\mu_\alpha.
\end{equation}
The case $\alpha = 1$ is different: it can be shown that the set
\[
\left\{ f \in L^2(\mu_1) : \int f \, d\mu_1 = 0, \quad \int (f')^2 \, d\mu_1 \leq 1 \right\}
\]
is not compact in $L^2(\mu_1)$. We refer to \cite{bakry2013analysis} for details. As a consequence, a bound of the form \eqref{eq26} cannot hold with any fixed rate. Nevertheless, a corollary of a result by \cite{lubinsky2007survey} shows that if $f$ is $1$-Lipschitz, then
\begin{equation}\label{eqLubinsky}
    E_m^2(f, \mu_1) \lesssim \frac{1}{\log^2(m + 1)}.
\end{equation}
As a consequence, we may state a universal approximation rate for log-concave probability measures on the real line:

\begin{lemma}
Let $\mu$ be a log-concave probability measure on $\RR$ with unit variance. Then for any $1$-Lipschitz function $f$ and $m \geq 1$,
\[
E_m^2(\mu, f) \lesssim \frac{1}{\log^2(1 + m)}.
\]
\end{lemma}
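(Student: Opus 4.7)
The plan is to reduce the general claim to the exponential case \eqref{eqLubinsky} by dominating the density of $\mu$ pointwise by that of a suitably scaled symmetric exponential measure. In one dimension, a unit-variance log-concave density is always sandwiched above by a symmetric exponential of universal scale, and the sandwich transfers the polynomial approximation rate essentially for free.

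I would first establish that there exist universal constants $C_0, c_0 > 0$ and a point $x_0 \in \RR$ (depending on $\mu$, for instance its mode) such that the density $f_\mu$ of $\mu$ satisfies
\[
    f_\mu(x) \leq C_0 \, e^{-c_0 |x - x_0|} \qquad \text{for all } x \in \RR.
\]
This envelope is classical for 1D log-concave measures with unit variance and combines two ingredients: (i) the value of $f_\mu$ at the mode is bounded above by a universal constant, since $\|f_\mu\|_\infty \cdot \sigma(\mu)$ is universally bounded for a log-concave density on $\RR$; and (ii) by the Gromov--Milman exponential concentration of Proposition~\ref{prop:lipschitz_concentration} applied to the identity function, together with $C_P(\mu) \lesssim 1$ (which holds in 1D for unit-variance log-concave $\mu$), the tails $\PP_\mu(|X - x_0| > t)$ decay at a universal exponential rate. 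Log-concavity, through monotonicity of $f_\mu$ past the mode, upgrades the tail probability estimate in (ii) to a pointwise density estimate, while (i) takes care of the bulk region; merging the two yields the envelope above.

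Let $\nu$ denote the symmetric exponential probability measure with density $\tfrac{c_0}{2} e^{-c_0 |x - x_0|}$. The envelope gives
\[
    \frac{d\mu}{d\nu}(x) \leq K := \frac{2 C_0}{c_0} \qquad \text{for all } x \in \RR,
\]
so that $\|g\|_{L^2(\mu)}^2 \leq K \|g\|_{L^2(\nu)}^2$ for every measurable $g$. In particular $E_m^2(\mu, f) \leq K \cdot E_m^2(\nu, f)$ for every $f$ and every $m \geq 0$.

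Finally, $\nu$ is the pushforward of $\mu_1$ by the affine map $x \mapsto x_0 + x/c_0$, which preserves polynomial degree exactly and rescales the Lipschitz constant of a function by the universal factor $1/c_0$. Therefore the bound \eqref{eqLubinsky} transfers to $\nu$: for every $1$-Lipschitz function $f$,
\[
    E_m^2(\nu, f) \lesssim \frac{1}{\log^2(m+1)}.
\]
Combined with $E_m^2(\mu, f) \leq K \cdot E_m^2(\nu, f)$ this is the desired bound. The only step that is not immediate is the uniform density envelope, but it is a standard compilation of facts about 1D log-concave measures, and I do not expect any serious obstacle.
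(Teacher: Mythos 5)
Your proposal is correct and follows essentially the same route as the paper: the paper's proof consists of citing Bobkov's estimate $\rho(x)\leq Ce^{-|x|/C}$ for unit-variance log-concave densities on $\RR$ and then (implicitly) transferring \eqref{eqLubinsky} from the two-sided exponential to $\mu$ via the change-of-measure inequality $\|g\|_{L^2(\mu)}^2\leq K\|g\|_{L^2(\nu)}^2$. You spell out the density envelope and the affine/measure-comparison steps that the paper leaves to the reader, but the underlying argument is the same.
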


\begin{proof}
This follows from the fact that if $\rho$ is a log-concave density on $\RR$ with unit variance and barycenter at $0$, then
\[
\rho(x) \leq C \, e^{-\abs{x} / C}
\]
for some universal constant $C > 0$. A proof of this estimate can be found in \cite{bobkov2003concentration}.
\end{proof}

We also mention that, in sharp contrast with the two-sided exponential distribution, the one-sided exponential enjoys a much faster approximation rate. Denote by $\nu = e^{-x} \, \ind_{\RR^+}(x)\, dx$. Then for $1$-Lipschitz functions $f$,
\[
E_m^2(\nu, f) \lesssim \frac{1}{m},
\]
see e.g., \cite{bizeul2025polynomial}.

\subsubsection{Higher dimensions}

In higher dimensions, quantitative results on polynomial approximation are scarce. A notable exception is the case of the Gaussian measure $\gamma_d$ on $\RR^d$, which admits an explicit orthogonal basis of polynomials: the Hermite polynomials.
In dimension one, the $n$-th Hermite polynomial is defined via the Rodrigues formula:
\begin{equation}
    H_n(x) := (-1)^n e^{x^2/2} \frac{d^n}{dx^n} e^{-x^2/2},
\end{equation}
and satisfies the orthogonality relation:
\begin{equation}
    \int_{-\infty}^{+\infty} H_n(x) H_m(x) \, e^{-x^2/2} \, dx = n! \sqrt{2\pi} \, \delta_{n,m}.
\end{equation}
In dimension $d$, for a multi-index $\alpha = (\alpha_1, \dots, \alpha_d)$, define
\[
H_\alpha(x) := H_{\alpha_1}(x_1) \cdots H_{\alpha_d}(x_d).
\]
The Hermite polynomials form an orthogonal basis of $L^2(\gamma_d)$ and are also eigenfunctions of the differential operator
\[
L := \Delta + x \cdot \nabla,
\]
which is the infinitesimal generator of the Ornstein–Uhlenbeck semigroup:
\[
T_t f(x) := \EE \left[ f\left( \delta_t x + \sqrt{1 - \delta_t^2} G \right) \right],
\]
where $G \sim \gamma_d$ and $\delta_t = e^{-t}$.

\begin{proposition}\label{prop397}
For any multi-index $\alpha$, we have:
\[
L H_\alpha = -\abs{\alpha} \, H_\alpha,
\]
and consequently,
\[
T_t H_\alpha = e^{-t \abs{\alpha}} H_\alpha.
\]
\end{proposition}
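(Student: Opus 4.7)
The plan is to reduce the $d$-dimensional statement to the one-dimensional case using the tensor-product structure of both $L$ and the Hermite basis, and then to derive the semigroup identity from the eigenvalue relation by a one-line ODE argument.

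First, I would establish the one-dimensional eigenvalue relation $L H_n = -n H_n$ (where here $L f = f'' - x f'$, the correct generator of the OU semigroup as defined by $T_t$). The proof uses the two standard recursions that follow immediately from the Rodrigues formula by induction, namely
\[
H_n'(x) = n H_{n-1}(x) \qquad \text{and} \qquad H_{n+1}(x) = x H_n(x) - n H_{n-1}(x).
\]
From the first we get $H_n''(x) = n(n-1) H_{n-2}(x)$, and combining with the second,
\[
x H_n'(x) = n\, x H_{n-1}(x) = n H_n(x) + n(n-1) H_{n-2}(x).
\]
Subtracting yields $H_n''(x) - x H_n'(x) = -n H_n(x)$, which is exactly $L H_n = -n H_n$.

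Next I would pass to dimension $d$ by tensorization. Since
\[
L = \sum_{i=1}^d \bigl(\partial_{x_i}^2 - x_i \partial_{x_i}\bigr)
\]
is a sum of operators each acting on a single variable, and since $H_\alpha(x) = \prod_{i=1}^d H_{\alpha_i}(x_i)$ is a product of one-variable factors, applying $L$ term by term and using the one-dimensional identity on the $i$-th factor gives
\[
L H_\alpha(x) = \sum_{i=1}^d \bigl(H_{\alpha_i}''(x_i) - x_i H_{\alpha_i}'(x_i)\bigr) \prod_{j \neq i} H_{\alpha_j}(x_j) = -\sum_{i=1}^d \alpha_i\, H_\alpha(x) = -|\alpha|\, H_\alpha(x).
\]

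Finally, the semigroup identity follows at once. Setting $u(t) := T_t H_\alpha \in L^2(\gamma_d)$, the fact that $T_t = e^{tL}$ commutes with $L$ on its domain gives $u'(t) = L T_t H_\alpha = T_t (L H_\alpha) = -|\alpha|\, u(t)$, with $u(0) = H_\alpha$, so that $u(t) = e^{-|\alpha| t} H_\alpha$. (As an independent check, one can verify the identity directly via the generating function $\sum_n H_n(x)\, s^n / n! = e^{sx - s^2/2}$: applying $T_t$ to both sides and using the explicit Mehler-type formula for $T_t$ gives $e^{s e^{-t} x - s^2 e^{-2t}/2} = \sum_n e^{-nt} H_n(x)\, s^n / n!$, and comparing coefficients of $s^n$ yields the claim.) There is no genuine obstacle in this proof; the only subtleties are purely algebraic and concern keeping track of the sign conventions.
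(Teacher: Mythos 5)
Your proof is correct, and since the paper states Proposition~\ref{prop397} without proof (as a standard fact), there is nothing to compare against. Your three-step argument—(i) one-dimensional eigenvalue relation from the Hermite recursions $H_n' = nH_{n-1}$ and $H_{n+1} = xH_n - nH_{n-1}$, (ii) tensorization, (iii) solving the ODE $u' = Lu$—is the standard route and is carried out cleanly. One small but worthwhile observation you make implicitly: the paper writes $L := \Delta + x\cdot\nabla$, which is a sign typo; the Ornstein–Uhlenbeck generator consistent with the Langevin form $L = \Delta - \nabla V\cdot\nabla$ (with $V(x) = |x|^2/2$) and with the stated eigenvalue $-|\alpha|$ is $L = \Delta - x\cdot\nabla$, which is exactly what you use. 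A minor stylistic remark: the paper defines $T_t$ via the Mehler representation rather than abstractly as $e^{tL}$, so strictly speaking the cleanest logical order is your ``independent check''---derive $T_t H_n = e^{-nt}H_n$ directly by applying the Mehler formula to the generating function $e^{sx - s^2/2}$, then obtain $LH_n = -nH_n$ by differentiating at $t=0$---since your ODE argument tacitly uses $T_t = e^{tL}$, which is itself a theorem about the Friedrichs extension. But this is a matter of presentation, not correctness.
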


Given a function $f \in L^2(\gamma_d)$, we may decompose it in the Hermite basis as
\[
f = \sum_\alpha f_\alpha H_\alpha.
\]
Using the integration by parts formula \eqref{eq597}, we compute the gradient energy:
\begin{align*}
\int \norm{\nabla f}^2 \, d\gamma_d
&= \int (-Lf) f \, d\gamma_d \\
&= \sum_{\alpha} \abs{\alpha} f_\alpha^2.
\end{align*}
Finally,
\begin{align*}
    E_m^2(\gamma_d, f) = \sum_{\abs{\alpha} \geq m+1} f_\alpha^2
    &\leq \frac{1}{m+1} \sum_{\alpha} \abs{\alpha} f_\alpha^2 \\
    &\leq \frac{1}{m+1} \int \abs{\nabla f}^2 \, d\gamma_d.
\end{align*}

This is an instance of the tensorization principle established in \cite{bizeul2025polynomial}.

\begin{proposition}[Tensorization]\label{prop_tensorization}
Let $\mu_i$ be a probability measure on $\RR$ for $i = 1, \ldots, d$. Assume that for all $i$,
\[
\sum_{\alpha \geq 1} \varphi_i(\alpha) f_\alpha^2 \leq \int_\RR (f')^2 w_i(x) \, d\mu_i,
\]
for some positive functions $(\varphi_i)_{1 \leq i \leq d}$ and $(w_i)_{1 \leq i \leq d}$, where $f_\alpha$ denotes the coefficients in the orthonormal polynomial basis of $\mu_i$.
Let $\mu := \mu_1 \otimes \cdots \otimes \mu_d$. Then for all smooth $f \in L^2(\mu)$,
\[
\sum_{\abs{\alpha} \geq 1} \varphi(\alpha) f_\alpha^2 \leq \int_{\RR^d} \sum_{i=1}^d w_i(x_i) (\partial_i f)^2 \, d\mu,
\]
where $\varphi(\alpha) := \sum_i \varphi_i(\alpha_i)$ and we set $\varphi_i(0) := 0$, $\abs{\alpha} := \sum_i \alpha_i$.
In particular, defining
\[
\Phi(m) := \inf_{\abs{\alpha} = m} \varphi(\alpha),
\]
we obtain the approximation bound
\[
E_m^2(\mu, f) \leq \frac{1}{\Phi(m+1)} \int_{\RR^d} \sum_{i=1}^d w_i(x_i) (\partial_i f)^2 \, d\mu.
\]
Moreover, if $f$ is $1$-Lipschitz,
\[
E_m^2(\mu, f) \leq \frac{1}{\Phi(m+1)} \, \EE \left[ \max_i w_i(X_i) \right].
\]
\end{proposition}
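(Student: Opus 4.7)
The plan is a standard product-space tensorization: we propagate the one-dimensional inequalities along each coordinate direction by conditioning on the other variables, then sum. Let $(P^{(i)}_k)_{k \geq 0}$ denote the orthonormal polynomial basis of $L^2(\mu_i)$, and set
\[
P_\alpha(x) := \prod_{i=1}^d P^{(i)}_{\alpha_i}(x_i),
\]
which is the orthonormal basis of $L^2(\mu)$. For $f \in L^2(\mu)$, the coefficients in this basis are exactly the $f_\alpha$. The key observation is that for each fixed $i$, we may expand $f$ in the $i$-th variable while leaving the others as a parameter:
\[
f(x) = \sum_{k \geq 0} g^{(i)}_k(x_{-i})\, P^{(i)}_k(x_i), \qquad g^{(i)}_k(x_{-i}) := \int_\RR f(x)\, P^{(i)}_k(x_i)\, d\mu_i(x_i).
\]

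Next, I would apply the hypothesis of the proposition with $x_{-i}$ frozen: for $\mu_{-i}$-almost every $x_{-i}$,
\[
\sum_{k \geq 1} \varphi_i(k)\, g^{(i)}_k(x_{-i})^2 \leq \int_\RR (\partial_i f)^2(x_{-i}, x_i)\, w_i(x_i)\, d\mu_i(x_i).
\]
Integrating over $x_{-i}$ against $\mu_{-i} = \bigotimes_{j\neq i}\mu_j$ and using the orthonormal identity
\[
\int g^{(i)}_k(x_{-i})^2\, d\mu_{-i} = \sum_{\alpha : \alpha_i = k} f_\alpha^2,
\]
I obtain the one-direction bound
\[
\sum_\alpha \varphi_i(\alpha_i)\, f_\alpha^2 \leq \int_{\RR^d} (\partial_i f)^2\, w_i(x_i)\, d\mu,
\]
where I used $\varphi_i(0) = 0$ to drop the $\alpha_i = 0$ terms. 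Summing over $i = 1,\dots,d$ and recalling $\varphi(\alpha) = \sum_i \varphi_i(\alpha_i)$ yields the stated inequality
\[
\sum_{|\alpha| \geq 1} \varphi(\alpha)\, f_\alpha^2 \leq \int_{\RR^d} \sum_{i=1}^d w_i(x_i)\, (\partial_i f)^2\, d\mu.
\]

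For the approximation bound, the $\varphi_i$ coming from polynomial approximation are non-decreasing (as in the Legendre and Hermite examples), so $\Phi(k) = \inf_{|\alpha|=k} \varphi(\alpha)$ is non-decreasing in $k$. Thus
\[
E_m^2(\mu, f) = \sum_{|\alpha| \geq m+1} f_\alpha^2 \leq \frac{1}{\Phi(m+1)} \sum_{|\alpha| \geq m+1} \varphi(\alpha)\, f_\alpha^2 \leq \frac{1}{\Phi(m+1)} \int_{\RR^d} \sum_i w_i(x_i)\, (\partial_i f)^2\, d\mu.
\]
Finally, if $f$ is $1$-Lipschitz, then $\sum_i (\partial_i f)^2 = |\nabla f|^2 \leq 1$ pointwise, so
\[
\sum_i w_i(x_i)\, (\partial_i f)^2 \leq \max_i w_i(x_i) \cdot \sum_i (\partial_i f)^2 \leq \max_i w_i(x_i),
\]
which after integration gives $E_m^2(\mu,f) \leq \Phi(m+1)^{-1}\, \EE[\max_i w_i(X_i)]$.

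There is no real obstacle here; the argument is essentially bookkeeping. The only subtle point worth emphasizing is the role of the normalization $\varphi_i(0) = 0$, which is what lets the hypothesis — stated only for the non-constant part in one dimension — lift cleanly to a global inequality after summing over directions. Everything else follows from Fubini applied to the product structure of the basis $P_\alpha$.
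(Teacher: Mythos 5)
The paper does not include a proof of this proposition; it refers to the companion paper \cite{bizeul2025polynomial} for the tensorization principle, so there is no in-text argument to compare against. Your proof is the standard tensorization-by-conditioning argument and is correct: freezing $x_{-i}$, applying the one-dimensional hypothesis in the $i$-th variable, integrating against $\mu_{-i}$ and using Parseval in the product basis to identify $\int (g_k^{(i)})^2\,d\mu_{-i}$ with $\sum_{\alpha:\alpha_i=k} f_\alpha^2$, then summing over $i$. The convention $\varphi_i(0)=0$ is indeed exactly what lets the one-direction inequality be written over all $\alpha$ rather than only those with $\alpha_i\geq 1$. One point you rightly flag, and which deserves emphasis: passing from $\sum_{|\alpha|\geq m+1}\varphi(\alpha)f_\alpha^2$ to $\Phi(m+1)\sum_{|\alpha|\geq m+1}f_\alpha^2$ requires $\varphi(\alpha)\geq \Phi(m+1)$ for \emph{all} $|\alpha|\geq m+1$, not merely $|\alpha|=m+1$, i.e.\ that $\Phi$ be non-decreasing. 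This does not follow formally from the stated hypotheses, but it does follow from the additional assumption that each $\varphi_i$ is non-decreasing on $\NN$ (given $\alpha$ with $|\alpha|=m+1$, drop a unit from any coordinate with $\alpha_i\geq 1$ to get $\beta$ with $|\beta|=m$ and $\varphi(\beta)\leq\varphi(\alpha)$); this monotonicity holds in every example the paper considers and is implicit in the statement. Your proof fills in all the bookkeeping the paper leaves to the reference.
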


Let us illustrate Proposition~\ref{prop_tensorization} in concrete examples. For $1<\beta\leq2$, define the product measure
\[
\mu_\beta^{\otimes d} := \mu_\beta \otimes \cdots \otimes \mu_\beta.
\]
Recall that in dimension $1$, we have
\[
E_m^2(\mu_\beta, f) \lesssim m^{2/\beta - 2} \int_\RR (f')^2 \, d\mu_\beta,
\]
i.e., the tail bound
\[
\sum_{k \geq m+1} f_k^2 \lesssim m^{2/\beta - 2} \int_\RR (f')^2 \, d\mu_\beta.
\]
Using summation by parts, we deduce that for any $0 < \delta \leq 1$,
\[
\sum_{k \geq 1} \frac{k^{2-2/\beta} f_k^2}{\log^{1+\delta}(1 + k)} \leq \frac{C}{\delta} \int_\RR (f')^2 \, d\mu_\beta,
\]
for some constant $C > 0$. Define
\[
\varphi(x) := \frac{x^{2- 2/\beta }}{\log^{1+\delta}(x)}.
\]
Since $\varphi(x)/x$ decreases on $(1, \infty)$, we obtain
\[
\Phi(m) := \inf_{\abs{\alpha} = m} \sum_i \varphi(\alpha_i) = \varphi(m).
\]
By tensorization, this yields
\[
E_m^2(\mu_\beta^{\otimes d}, f) \lesssim \frac{\log^{1+\delta}(m)}{\delta \, m^{2-2/\beta}} \int_{\RR^d} \norm{\nabla f}^2 \, d\mu_\beta^{\otimes d}.
\]
Choosing
\[
\delta^* := \max\left(1, \frac{1}{\log \log m} \right),
\]
we obtain
\begin{equation}
E_m^2(\mu_\beta^{\otimes d}, f) \lesssim \frac{\log m \log \log m}{m^{2-2/\beta}} \int_{\RR^d} \norm{\nabla f}^2 \, d\mu_\beta^{\otimes d}.
\end{equation}
For the case $\beta = 1$, it was proved in \cite{bizeul2025polynomial} that
\begin{equation}\label{eq60}
\sum_{k = 1}^{\infty} \log^2(e + k) f_k^2 \lesssim \int_\RR \log^2(e + \abs{x}) (f')^2 \, d\mu_1.
\end{equation}
Hence, tensorization gives
\[
E_m^2(f, \mu_1^{\otimes d}) \lesssim \frac{1}{\log^2(1 + m)} \int_{\RR^d} \sum_{i = 1}^d \log^2(e + \abs{x_i}) (\partial_i f)^2 \, d\mu_1^{\otimes d}.
\]
If $f$ is $1$-Lipschitz, then
\[
E_m^2(f, \mu_1^{\otimes d}) \lesssim \frac{\EE \left[ \max_i \log^2(e + \abs{X_i}) \right]}{\log^2(m)} \lesssim \frac{\log^2 \log d}{\log^2 m}.
\]
In contrast, for the product of one-sided exponential measures, the approximation rate is much better. Let $\nu^{\otimes d}$ be the $d$-fold product of the one-sided exponential distribution. Then for $1$-Lipschitz~$f$,
\[
E_m^2(\nu^{\otimes d}, f) \lesssim \frac{\log d}{m},
\]
see \cite{bizeul2025polynomial}.

\medskip We conclude this section with an interesting dimensional effect of tensorization, which lies at the core of the entropy estimates of Section~\ref{sec_entropy}. Let $\mu$ be the uniform probability measure on $[-1, 1]$. Recall that for sufficiently regular $f$, using \eqref{eq376},
\[
\sum_{k \geq 1} k^2 f_k^2 \leq \int (f')^2 (1 - x^2) \, d\mu \leq \int (f')^2 \, d\mu.
\]
The rate is quadratic, much faster than the linear rate observed for the Gaussian measure, for example. For the uniform measure on the hypercube $\mu^{\otimes d}$, the tensorization principle yields, in particular,
\[
E_m^2(\mu, f) \leq \frac{1}{\Phi(m+1)} \int \norm{\nabla f}^2 \, d\mu^{\otimes d},
\]
where
\[
\Phi(m) = \inf_{\abs{\alpha} = m} \sum_i \alpha_i^2.
\]
The key difference is that here the function $\varphi(x) = x^2$ is convex, so that $\varphi(x)/x$ is increasing (as opposed to decreasing). Therefore,
\[
\Phi(m) = m, \qqquad \text{for } m \leq d,
\]
while for $m \geq d$,
\[
\Phi(m) \simeq \frac{m^2}{d}.
\]
The takeaway is that when the degree $m$ is smaller than or comparable to the dimension $d$, the rate cannot be better than the Gaussian one, i.e., linear in $m$.

\medskip
More precisely, let $\mu$ be an isotropic product measure on $\RR^d$, and let $\Phi_\mu$ denote the best function such that for all sufficiently regular functions $f$,
\[
E_m^2(\mu, f) \leq \frac{1}{\Phi_\mu(m)}.
\]
Then, for $m \leq d$,
\[
\Phi_\mu(m) \leq \Phi_\gamma(m) = m+1.
\]
The corresponding extremal function $f$ is the multilinear polynomial of degree $m \leq d$ defined by
\[
f(x) = \prod_{i = 1}^{m} x_i.
\]
Whenever the measure $\mu$ is isotropic and of product form, the function $f$ belongs to the tensor basis of orthonormal polynomials. Therefore, for all $k < m$,
\[
E_k^2(\mu, f) = \norm{f - 0}_{L^2(\mu)}^2 = 1.
\]
On the other hand, a direct computation shows that
\[
\int \norm{\nabla f}^2 \, d\mu = m.
\]
Hence,
\[
\Phi_\mu(m - 1) \leq \frac{\int \norm{\nabla f}^2 \, d\mu}{E_{m - 1}^2(\mu, f)} \leq m.
\]
We will see in Section~\ref{sec_entropy} that if $\mu$ is additionally log-concave, this remains true when restricting to Lipschitz functions,
at least for $m \leq \sqrt{d}$.

\section{Empirical computation of an approximating polynomial}\label{sec_algo}

In this section we analyze in detail the empirical procedures introduced in the introduction.
Given observations \eqref{eq138}, our goal is to construct a polynomial estimator of $f$,
taking advantage of the approximation property \eqref{eq177}.
We focus on two natural algorithms: the projection estimator, which relies on an orthogonal polynomial basis of $\mu$,
and the least-squares estimator, which requires no structural knowledge of $\mu$.

\subsection{The projection estimator}\label{subsec_projection}

We fix a $1$-Lipschitz function $f$, and assume that $\mu$ is a log-concave probability measure on $\RR^d$ with polynomial approximation rate:
\[
\sup_{f} \inf_{\deg(P) \leq m} \norm{f - P}_{L^2(\mu)} \leq \Psi_{\mu}(m),
\]
where the $\sup$ runs over all $1$-Lipschitz functions $f$. Recall that for normalization purposes, we assume that $\Psi_{\mu}(0) = 1$, which amounts
to the bound $C_P(\mu) \lesssim 1$.

\medskip We decompose the function $f$  as
\begin{equation}
    f = f - \EE_\mu f + \EE_\mu f =: \tilde{f} + a,
\end{equation}
where $a = \EE_\mu f$ is a constant and $\tilde{f}$ is mean-zero.

\medskip
Recall that we denote by $X_1, \dots, X_n$ the observed i.i.d. samples from $\mu$. The integer $n$ denotes the sample size used in the algorithm, while $m$ denotes the maximal polynomial degree used. Finally, we define
\[
D = D(d, m) := \binom{d + m}{m},
\]
which is the dimension of the space $\cP_{d,m}$ of multivariate polynomials of total degree at most $m$ in $\RR^d$.
Let us further denote by $(p_k)_{0 \leq k \leq D-1}$ an orthonormal basis of $\cP_{d,m} \subseteq L^2(\mu)$. Thus, we may decompose $f$ as
\begin{equation}\label{eq458}
    f = \sum_{k = 0}^{D-1} f_k \, p_k + f_{>m} = a + \sum_{k = 1}^{D - 1} f_k \, p_k + f_{>m},
\end{equation}
where for all $1 \leq k \leq D - 1$,
\[
f_k := \langle f, p_k \rangle_{L^2(\mu)} = \langle \tilde{f}, p_k \rangle_{L^2(\mu)},
\]
and
\[
\norm{f_{>m}}_{L^2(\mu)} \leq \Psi(m).
\]
Recall that the empirical estimator of the mean is given by
\[
\hat{a} := \hat{f}_0 = \frac{1}{n} \sum_{i = 1}^n Y_i,
\]
and for $1 \leq k \leq D - 1$, we define the empirical coefficients as
\begin{align*}
    \hat{f}_k &= \frac{1}{n} \sum_{i = 1}^n (Y_i - \hat{a}) \, p_k(X_i) \\
    &= \frac{1}{n} \sum_{i = 1}^n (Y_i - a) \, p_k(X_i) + \frac{1}{n} \sum_{i = 1}^n (\hat{a} - a) \, p_k(X_i) \\
    &=: \hat{f}_k^* + \delta_k. \numberthis \label{eq491}
\end{align*}
Here, $\hat{f}_k^*$ is the unbiased component of the estimator, satisfying
\[
\EE[\hat{f}_k^*] = f_k.
\]
The projection estimator is then given by
\[
\hat{f} := \hat{a} + \sum_{k = 1}^{D - 1} \hat{f}_k \, p_k.
\]
Our goal is to prove the following result, previously stated in the introduction.

\begin{theorem}\label{thm:avg_error_fourier}
Under the above assumptions, the projection estimator satisfies
\begin{equation}
    \EE \norm{f - \hat{f}}_{L^2(\mu)}^2 \leq \Psi_\mu^2(m) + \frac{(C m^2+4\sigma^2) D}{n} ,
\end{equation}
for some absolute constant $C > 0$.

Furthermore, in the Gaussian setting, we have the sharper bound
\begin{equation}\label{eq506}
    \EE \norm{f - \hat{f}}_{L^2(\gamma)}^2 \leq \frac{1}{m} + \frac{(C m+4\sigma^2) D}{n},
\end{equation}
for some absolute constant $C > 0$.
\end{theorem}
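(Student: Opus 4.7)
The plan is to exploit the orthogonal decomposition of $L^2(\mu)$ induced by the basis $(p_k)$ and reduce the $L^2$ risk to variance bounds on the coefficient estimators. Since $\hat f\in\cP_{d,m}$ while $f_{>m}$ is orthogonal to $\cP_{d,m}$ in $L^2(\mu)$, Pythagoras yields
\[
\normmu{f-\hat f}^2 = \normmu{f_{>m}}^2 + (a-\hat a)^2 + \sum_{k=1}^{D-1}(f_k-\hat f_k)^2.
\]
Taking expectations, the first term is bounded by $\Psi_\mu^2(m)$ by definition of $\Psi_\mu$, and the second by $(\Var_\mu(f)+\sigma^2)/n\leq(1+\sigma^2)/n$, since $\hat a$ is an unbiased empirical mean and the normalization $\Psi_\mu(0)=1$ forces $C_P^{\mathrm{Lip}}(\mu)\leq 1$.

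For the coefficient errors I would use the splitting $\hat f_k=\hat f_k^*+\delta_k$ from \eqref{eq491} together with the crude bound $(f_k-\hat f_k)^2\leq 2(f_k-\hat f_k^*)^2+2\delta_k^2$. The main term $\hat f_k^*$ is unbiased for $f_k$ with
\[
\Var(\hat f_k^*) \leq \frac{\EE[(f-a)^2p_k^2(X)]+\sigma^2}{n},
\]
using independence of the $\xi_i$ from the $X_i$. The key step is to estimate $\sum_k\EE[(f-a)^2p_k^2]$ via H\"older's inequality with conjugate exponents $p=4m+2$ and $q=2+1/m$ (so that $2/p+2/q=1$):
\[
\EE[(f-a)^2p_k^2]\leq \norm{f-a}_{L^{4m+2}(\mu)}^2\cdot\norm{p_k}_{L^{2+1/m}(\mu)}^2.
\]
Proposition \ref{prop:lipschitz_concentration} bounds the first factor by $C(4m+2)^2\lesssim m^2$, and Proposition \ref{prop:poly_concentration_logconcave} applied with $q-2=1/m$ bounds the second factor by $C^{2(q-2)m}=C^2$, a universal constant. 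Summing over $k=1,\dots,D-1$ gives $\sum_k\Var(\hat f_k^*)\leq (Cm^2+\sigma^2)D/n$.

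The cross-term $\delta_k=(\hat a-a)\cdot\frac{1}{n}\sum_i p_k(X_i)$ is of lower order, which I would handle via Cauchy--Schwarz: $\hat a-a$ has fourth moment $O((1+\sigma)^4/n^2)$ by Bernstein (Proposition \ref{prop:bernstein}), while $\bar p_k:=\frac{1}{n}\sum_i p_k(X_i)$ has fourth moment $3/n^2+\EE[p_k^4]/n^3=O(1/n^2)$ in the prescribed regime $n\geq C^{4m}$ (implied by $n\geq d^5$ and $m\leq\log n/\log d$ for $d$ large). Summing yields $\sum_k\EE[\delta_k^2]=O(D(1+\sigma^2)/n^2)$, which is dominated by the main term, and recombining all the contributions (absorbing the stray $(1+\sigma^2)/n$ into the $CD/n$ prefactor) produces the advertised bound.

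The Gaussian refinement then follows the same blueprint with the log-Sobolev bound $\norm{f-a}_{L^p(\gamma)}\lesssim\sqrt p$ replacing the Poincar\'e bound, which sharpens $\EE[(f-a)^2p_k^2]\lesssim m$ instead of $m^2$; together with $\Psi_\gamma^2(m)\leq 1/(m+1)$ from \eqref{eq146} this proves \eqref{eq506}. The main obstacle is the H\"older step above: polynomial moment bounds from Proposition \ref{prop:poly_concentration_logconcave} grow as $(Cq)^m$ and are useless unless $q$ is taken extremely close to $2$, so the sweet spot $q=2+1/m$ is what allows them to pair with the (sub)exponential Lipschitz moment growth to close the estimate with the correct $m^2$ (respectively $m$) dependence.
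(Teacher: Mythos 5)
Your plan reproduces the paper's argument almost step for step: the same Pythagoras decomposition $\normmu{f-\hat f}^2 = \normmu{f_{>m}}^2 + (a-\hat a)^2 + \sum_k(f_k-\hat f_k)^2$, the same split $\hat f_k = \hat f_k^* + \delta_k$, the same observation that $\Var(\hat f_k^*) \le (\EE[\tilde f^2 p_k^2] + \sigma^2)/n$, and crucially the same idea of applying H\"older's inequality with near-extremal exponents so that the polynomial factor $\norm{p_k}_{L^{2+O(1/m)}}$ stays $O(1)$ by Proposition~\ref{prop:poly_concentration_logconcave} while the Lipschitz factor pays only polynomially in $m$ via Proposition~\ref{prop:lipschitz_concentration}. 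Your exponent pair $(4m+2,\,2+1/m)$ differs from the paper's $(2m+2,\,2+2/m)$, but both are Hölder-conjugate in the squared sense and give the same order $\norm{\tilde f}^2 \lesssim m^2$ (or $\lesssim m$ via log-Sobolev in the Gaussian case) times a universal constant, so this is purely cosmetic. The Gaussian sharpening via $\rho_{LS}(\gamma)=1$ and $\Psi_\gamma^2(m)\le 1/(m+1)$ also matches.

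The one place where you genuinely diverge from the paper is the bias term $\delta_k = (\hat a - a)\bar p_k$. You propose Cauchy--Schwarz together with the fourth-moment identity $\EE\bar p_k^4 = 3(n-1)/n^3 + \EE p_k^4/n^3$, and since $\EE p_k^4$ can be of order $C^{m}$ for a degree-$m$ polynomial (Proposition~\ref{prop:poly_concentration_logconcave}), this only closes when $C^m \lesssim n$, a restriction you acknowledge by invoking $n \ge d^5$ and $m \lesssim \log n/\log d$. However, Theorem~\ref{thm:avg_error_fourier} is stated without any constraint relating $n$ and $m$ (the only hypotheses are log-concavity of $\mu$ and $\Psi_\mu(0)=1$), so your argument as written does not deliver the theorem in full generality. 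The paper avoids this by treating $\delta_k$ with the \emph{same} near-extremal H\"older pairing used for the main term: $\EE[(\hat a - a)^2\bar p_k^2] \le \norm{\hat a - a}_{2q}^2\,\norm{\bar p_k}_{2q^*}^2$ with $q=m+1$, then $\norm{\bar p_k}_{2+2/m} \le \norm{p_k}_{2+2/m}\lesssim 1$ by the triangle inequality together with Proposition~\ref{prop:poly_concentration_logconcave}, and $\norm{\hat a - a}_{2m+2}\lesssim m/\sqrt n$ by Bernstein. This gives $\EE\delta_k^2 \lesssim m^2/n + \sigma^2/n^2$ unconditionally. Note that when your extra condition \emph{does} hold, your $\delta_k$ bound is actually slightly tighter than the paper's ($O(D(1+\sigma^2)/n^2)$ versus $O(m^2D/n)$), but that improvement is not needed since the target bound is dominated by the $\Var(\hat f_k^*)$ contribution anyway. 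If you want to salvage the unconditional statement with minimal changes, just reuse your $(4m+2,\,2+1/m)$ H\"older pairing on $\delta_k$ rather than switching to Cauchy--Schwarz.
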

\begin{proof}
Let $P_m$ denote the orthogonal projection of $f$ onto the space $\cP_{d,m}$ of polynomials of degree at most $m$. Then:
\begin{align*}
    \EE \norm{f - \hat{f}}_2^2
    &= \EE \norm{f - P_m}_2^2 + \EE \norm{\hat{f} - P_m}_2^2 \\
    &\leq \Psi^2(m) + \Var(\hat{a}) + \sum_{k = 1}^{D - 1} \EE (\hat{f}_k - f_k)^2 \\
    &\leq \Psi^2(m) + \Var(\hat{a}) + 2 \sum_{k = 1}^{D - 1} \EE (\hat{f}_k^* - f_k)^2 + 2 \sum_{k = 1}^{D - 1} \EE \delta_k^2 \\
    &= \Psi^2(m) + \frac{\Var(Y_1)}{n} + 2 \sum_{k = 1}^{D - 1} \Var(\hat{f}_k^*) + 2 \sum_{k = 1}^{D - 1} \EE \delta_k^2 \\
    &\leq \Psi^2(m) + \frac{1 + \sigma^2}{n} + \frac{2}{n} \sum_{k = 1}^{D - 1} \Var\left((Y_1 - a) p_k(X_1)\right) + 2 \sum_{k = 1}^{D - 1} \EE \delta_k^2, \numberthis \label{eq513}
\end{align*}
where in the last passage we used that $Var_{\mu}(f) \leq \Psi_{\mu}(0) = 1$.
We first bound the third term in (\ref{eq513}). Let $(X, Y)$ denote a copy of $(X_1, Y_1)$. Observe that
\[
Y - a = f(X) + \xi - a = \tilde{f}(X) + \xi,
\]
where $\tilde{f} = f - a$ is centered. Then, for any $1 \leq k \leq D$,
\begin{align*}
    \Var\left((Y - a)p_k(X)\right)
    &\leq \EE \left( (Y - a)^2 p_k^2(X) \right) \\
    &= \EE \left( \tilde{f}^2(X) p_k^2(X) \right) + \EE \left( \xi^2 p_k^2(X) \right) \\
    &= \EE \left( \tilde{f}^2(X) p_k^2(X) \right) + \sigma^2.
\end{align*}

Now we apply Hölder's inequality with exponents $q = m + 1$ and $q^* = 1 + 1/m$:
\begin{align*}
    \EE \left[ \tilde{f}^2(X) p_k^2(X) \right]
    &\leq \left( \EE \tilde{f}^{2q}(X) \right)^{1/q} \left( \EE p_k^{2q^*}(X) \right)^{1/q^*} \\
    &\leq C \norm{\tilde{f}(X)}_{2m+2}^2,
\end{align*}
as follows from Proposition \ref{prop:poly_concentration_logconcave}. Recalling
that $C_P(\mu) \lesssim 1$, by Proposition~\ref{prop:lipschitz_concentration}, we have
\begin{equation}\label{eq884}
    \norm{\tilde{f}(X)}_{2m+2}^2 \lesssim m^2,
\end{equation}
since $\tilde{f}$ is $1$-Lipschitz and centered.
Now we bound the fourth term in~\eqref{eq513}. Define
\[
S_n := \frac{1}{n} \sum_{i = 1}^n p_k(X_i).
\]
Then:
\begin{align*}
    \EE \delta_k^2
    &= \EE \left( \hat{a} - a \right)^2 S_n^2 \\
    &= \EE \left( \frac{1}{n} \sum_{i = 1}^n \left( \tilde{f}(X_i) + \xi_i \right) \right)^2 S_n^2 \\
    &= \EE \left( \frac{1}{n} \sum_{i = 1}^n \tilde{f}(X_i) \right)^2 S_n^2 + \EE \left( \frac{1}{n} \sum_{i = 1}^n \xi_i \right)^2 S_n^2 \\
    &= \EE \left( \frac{1}{n} \sum_{i = 1}^n \tilde{f}(X_i) \right)^2 S_n^2 + \frac{\sigma^2}{n^2} \EE S_n^2.
\end{align*}
Note that $\EE S_n^2 = \EE P_k^2(X) / n = 1/n$.
By again using H\"older's inequality  with $q = m+1$, and bounding $\norm{S_n}_{q^*} \leq \norm{p_k(X)}_{q^*}$:
\begin{align*}
    \EE \delta_k^2
    &\leq \norm{\frac{1}{n} \sum_{i = 1}^n \tilde{f}(X_i)}_{q}^2 \cdot \norm{p_k(X)}_{q^*}^2 + \frac{\sigma^2}{n^3} \\
    &\lesssim \norm{\frac{1}{n} \sum_{i = 1}^n \tilde{f}(X_i)}_{m+1}^2 + \frac{\sigma^2}{n^2}.
\end{align*}
By Proposition~\ref{prop:bernstein}, we know that for a $1$-Lipschitz function:
$$
\norm{\frac{1}{n} \sum_{i = 1}^n \tilde{f}(X_i)}_{\psi_1} \lesssim \frac{1}{\sqrt{n}} \norm{\tilde{f}(X)}_{\psi_1} \lesssim \frac{1}{\sqrt{n}}.
$$
Hence,
\begin{equation}
\norm{\frac{1}{n} \sum_{i = 1}^n \tilde{f}(X_i)}_{m+1}^2 \lesssim \frac{m^2}{n}. \label{eq_534} \end{equation}
Plugging everything back into~\eqref{eq513}, we obtain:
\[
\EE \norm{f - \hat{f}}_2^2 \leq \Psi_\mu^2(m) + \frac{C m^2 D}{n} + \frac{4 \sigma^2D}{n},
\]
for some absolute constant $C > 0$, as claimed. For the ``Furthermore'' part,
we replace $m^2$ in (\ref{eq884}) and in (\ref{eq_534}) by
$m \cdot \min \{m, \rho_{LS}(X) \}$ which equals $m$ in the Gaussian case.

\subsubsection{Proof of Theorem \ref{thm_proj}}\label{subsub_proj}

Let us explain how to deduce Theorem \ref{thm_proj} from Theorem \ref{thm:avg_error_fourier}. Set
$$5\leq m_0 = \left \lfloor\frac{\log n}{\log d} \right \rfloor \leq \frac{d}{2}$$
and observe that
\begin{align*}
    \binom{d+m_0}{m_0} &\leq \left(\frac{e(d+m_0)}{m_0}\right)^{m_0}\\
    &\leq \left(\frac{5d}{m_0}\right)^{m_0}\\
    &\leq d^{m_0} \leq n.
\end{align*}
For any integer $1\leq p \leq m_0$ we set
$$m_p = m_0-p.$$
Recall that $D = D(d, m_p) = {d + m_p \choose m_p}$.
According to the preceding inequality, we have :
\begin{align*}
    \frac{\binom{d+m_p}{m_p}}{n} &\leq \frac{\binom{d+m_0-p}{m_0-p}}{\binom{d+m_0}{m_0}}
    = \frac{(d + m_0-p)!}{(d+m_0)!} \cdot \frac{ m_0!}{(m_0-p)! } \\ & \leq \left(\frac{m_0}{d}\right)^p.
\end{align*}
Plugging this into Theorem \ref{thm:avg_error_fourier}, we get for the choice of $m=m_p$,
$$\EE\normmu{f-\hat{f}}^2 \leq \Psi^2_\mu(m_p) + \left(Cm_0^2+4\sigma^2\right)\left(\frac{m_0}{d}\right)^p.$$
We choose
$$p = \max \left(4,\left\lceil\frac{4\log m_0}{\log d/m_0}\right\rceil \right).$$
In the first regime,
$$n\leq e^{\sqrt{d}\log d}$$
and $p=4$ while $m_0 \leq \sqrt{d}$. We have
$$\EE\normmu{f-\hat{f}}^2 \leq \Psi^2_\mu(m_0-4) + C'd(1/\sqrt{d})^4$$
which is what we wanted to proved. In the second regime, we have
$$e^{\sqrt{d}\log d}\leq n\leq e^{d\log d/2},$$
$$p = \left \lceil \frac{4\log(m_0)}{\log(d/m_0)} \right \rceil$$
and
$$\sqrt{d} - 1 \leq m_0 \leq d/2.$$
By our choice of $p$,
$$\left(\frac{m_0}{d}\right)^p \leq \left(\frac{m_0}{d}\right)^{\frac{4\log(m_0)}{\log(d/m_0)}} = \frac{1}{m_0^4}$$
which concludes the proof.
\end{proof}

As can be seen from the proof of Theorem \ref{thm_proj}, the error term in
(\ref{eq259}) may be improved to $O(1/d^5)$ or so if we take $m = m_0 12$ rather than $m = m_0 -4$. In any case, the error term is typically negligible compared to $\Psi_{\mu}^2(m)$.

\subsection{Least square estimator}
We now move to the analysis of the least squares estimator $\hat{f}_{LS}$. Given a choice of a polynomial degree $m$, this estimator  is defined as the polynomial of degree less than $m$ that minimizes the empirical $l^2$ risk:
\begin{equation}\label{eq565}
    \hat{f}_{LS} = \argmin_{P \in \cP_{d,m}} \sum_{i=1}^n\left(Y_i - P(X_i)\right)^2.
\end{equation}
The goal of this section is to prove the following bound on its prediction error:
\begin{theorem}\label{thm:avg_ls}
    Assume that
    $$ \sigma^2\leq d$$
    and that
    $d\geq d_0,$     for some universal constant $d_0\geq3$. Then for any $n, m \geq 1$ such that the right-hand side is smaller than $1$, it holds that
    \begin{equation}\label{eq643}
        \EE\normmu{f - \hat{f}_{LS}}^2 \leq \Psi^2_\mu(m) + \frac{(C\log(n))^{2m} D \log(D)}{n} + \frac{8\sigma^2D}{n},
    \end{equation}
     for some absolute constant $C \geq 1$.
\end{theorem}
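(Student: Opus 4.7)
The plan is to combine the Pythagorean identity for the least-squares projection with a matrix-concentration bound forcing the empirical Gram matrix of $\cP_{d,m}$ to be close to the identity, which will convert empirical control into $L^2(\mu)$ control. Fix an $L^2(\mu)$-orthonormal basis $(p_k)_{0\leq k<D}$ of $\cP_{d,m}$, let $P^* \in \cP_{d,m}$ be the $L^2(\mu)$-best approximation of $f$, and set $r := f-P^*$ so that $\|r\|_{L^2(\mu)}\leq \Psi_\mu(m)$. Write $\|g\|_n^2 := \tfrac{1}{n}\sum_i g(X_i)^2$ and let $V\subset\RR^n$ denote the evaluation subspace $\{(P(X_i))_i:P\in\cP_{d,m}\}$, with $\Pi_V$ the orthogonal projection onto it. Since the least-squares solution satisfies $(\hat f_{LS}(X_i))_i = \Pi_V (Y_i)_i$, the Pythagorean theorem gives, conditionally on $(X_i)$,
\begin{equation*}
  \EE_\xi \|\hat f_{LS}-f\|_n^2 \;=\; \tfrac{1}{n}\bigl\|(I-\Pi_V)f|_X\bigr\|_{\RR^n}^2 + \tfrac{\sigma^2\dim V}{n} \;\leq\; \|r\|_n^2 + \tfrac{\sigma^2 D}{n},
\end{equation*}
where in the last step we used $P^*\in\cP_{d,m}$ and $\dim V\leq D$.

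The core of the proof is to promote this empirical bound to an $L^2(\mu)$ bound by showing that the empirical and population norms are uniformly equivalent on $\cP_{d,m}$. Introduce $\hat G_{jk}:=\tfrac{1}{n}\sum_i p_j(X_i)p_k(X_i)$ and the good event $\mathcal{E}:=\{\tfrac{1}{2}I\preceq\hat G\preceq\tfrac{3}{2}I\}$, on which $\|P\|_n^2$ is comparable to $\|P\|_{L^2(\mu)}^2$ uniformly over $P\in\cP_{d,m}$. To estimate $\PP(\mathcal{E}^c)$, I would use Proposition~\ref{prop:poly_concentration_logconcave}: the bound $\|p_k\|_q\lesssim (Cq)^m$ exhibits $p_k(X)$ as a sub-$\psi_{1/m}$ random variable, so a union bound over the $nD$ pairs yields $\max_{i,k}|p_k(X_i)|\leq (C\log n)^m$ with probability at least $1-n^{-\alpha}$, for any prescribed $\alpha$, provided $C$ is large. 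On this event each rank-one summand $v_iv_i^\top$, with $v_i:=(p_k(X_i))_k$, has operator norm at most $D(C\log n)^{2m}$, and matrix Chernoff applied to $\hat G=\tfrac{1}{n}\sum_i v_iv_i^\top$ (whose mean equals $I$ by orthonormality) gives
\begin{equation*}
\PP(\mathcal{E}^c)\;\leq\; 2D\exp\!\Bigl(-\tfrac{c\,n}{D(C\log n)^{2m}}\Bigr).
\end{equation*}
This is $o(1/n)$ precisely in the regime $n\gtrsim D\log D\,(C\log n)^{2m}$, and it is the source of the $(C\log n)^{2m}D\log D/n$ term in \eqref{eq643}.

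On $\mathcal{E}$, since $\hat f_{LS}-P^*\in\cP_{d,m}$, the norm equivalence together with two triangle inequalities yields
\begin{equation*}
  \|\hat f_{LS}-f\|_{L^2(\mu)}^2 \;\leq\; 2\|\hat f_{LS}-P^*\|_{L^2(\mu)}^2 + 2\Psi_\mu^2(m) \;\lesssim\; \|\hat f_{LS}-f\|_n^2 + \|r\|_n^2 + \Psi_\mu^2(m),
\end{equation*}
and plugging in the Pythagorean identity above together with $\EE\|r\|_n^2=\|r\|_{L^2(\mu)}^2\leq\Psi_\mu^2(m)$ produces the required bound $\Psi_\mu^2(m)+O(\sigma^2 D/n)$ from the contribution of $\mathcal{E}$. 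The main obstacle is the bad event $\mathcal{E}^c$: there $\hat G$ may be nearly singular and $\hat f_{LS}$ is not a priori bounded in $L^2(\mu)$. I would handle it by the standard device of either (i) truncating $\hat f_{LS}$ at a polynomial-in-$n$ threshold $R=n^{O(m)}$, so that the contribution $R^2\PP(\mathcal{E}^c)$ is absorbed into the $(C\log n)^{2m}D\log D/n$ term thanks to the overwhelmingly small failure probability, or (ii) intersecting $\mathcal{E}$ with the event $\{\|Y\|_n^2=O(\mathrm{poly}(n))\}$, which holds with high probability by Gaussian tails for $\xi_i$ and sub-exponential tails for $f(X_i)$, and then transferring the empirical bound $\|\hat f_{LS}\|_n\leq 2\|Y\|_n$ to $L^2(\mu)$ via a cruder Cauchy--Schwarz estimate.
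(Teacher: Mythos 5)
Your overall architecture---Pythagorean identity for the empirical risk, then a good event $\cE$ on which the empirical Gram matrix is close to the identity, then a norm comparison to transfer empirical control to $L^2(\mu)$---is a sensible alternative to the paper's approach, which instead writes $\boldsymbol{\hat f_{LS}} = C_n^{-1}\boldsymbol{\hat f^*}$ and bounds moments of $\norm{C_n^{-1}-I}_{op}$ directly (Lemma~\ref{lem688}), using Rudelson's non-commutative-Khintchine lemma for $\norm{C_n - I}_{op}$ and, crucially, the Carbery--Wright anti-concentration theorem for $\lambda_{\min}(C_n)$. Your scheme is morally equivalent, but there is a quantitative gap at exactly the place where the paper works hardest, and I don't think your estimate closes it.

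The problem is the bound $\PP(\cE^c)\leq 2D\exp(-cn/(D(C\log n)^{2m}))$. The theorem is asserted for every $n,m$ such that the right-hand side of \eqref{eq643} is at most $1$, i.e.\ up to the boundary $n \simeq (C\log n)^{2m} D\log D$. In that regime $n/(D(C\log n)^{2m}) \simeq \log D$, so matrix Chernoff with the crude envelope $\norm{v_i}^2 \leq D(C\log n)^{2m}$ only gives $\PP(\cE^c)\lesssim D^{1-c}$ for a small universal $c<1$ coming from the Chernoff exponent. This is not $o(1/n)$---it is not even $o(1)$---and so the contribution from $\cE^c$ cannot be absorbed by truncation at a poly-$n$ level: $R^2\PP(\cE^c)$ with $R = n^{O(m)}$ is enormous. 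You would need an exponentially small failure probability, with exponent beating $m\log n$, and the Chernoff bound simply does not deliver it near the critical sample size. The paper's way around this is the Carbery--Wright small-ball estimate (Theorem~\ref{thm:carbery} via Lemma~\ref{lem:small_ball}), which gives $\PP(\lambda_{\min}(C_n)\leq t)\leq t^{n/16m}$ for $t\leq 1/n^2$; the exponent $n/16m\gg 1$ is what allows the bad event to be integrated against arbitrary polynomial blow-up of $\lambda_{\min}^{-1}$. That anti-concentration input for low-degree polynomials of log-concave vectors is the ingredient your proposal is missing, and without it I don't see how to control the bad event in the stated range of $n$.

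Two secondary remarks. First, your final step loses the constant $1$ in front of $\Psi_\mu^2(m)$; to match the statement you should use the exact $L^2(\mu)$-orthogonality $\normmu{f-\hat f_{LS}}^2 = \normmu{f-P_m f}^2 + \normmu{P_m f - \hat f_{LS}}^2$ at the top level (as the paper does) rather than a triangle inequality. Second, applying matrix Chernoff ``conditionally'' on $\max_{i,k}|p_k(X_i)|\leq (C\log n)^m$ breaks independence across $i$; you would need to truncate each $v_i$ individually and then compare the expectation of the truncated covariance to $I$, which is routine but needs to be said.
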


Before embarking on the proof of Theorem \ref{thm:avg_ls}, we remark that the assumptions implies in particular that
    \begin{equation}\label{eq1196}
        \log(n)^{m} D \leq (C\log(n))^{2m}D \leq n.
    \end{equation}
    Thus, if $n\geq 3$, we get
    \begin{equation}\label{eq1200}
        m\leq \log n.
    \end{equation}
    Furthermore,
    \begin{equation}\label{eq1204}
        D = \binom{d+m}{m} \geq \left(\frac{d}{m}\right)^m.
    \end{equation}
    Plugging \eqref{eq1204} and \eqref{eq1200} into \eqref{eq1196} we get
    $$d^m\leq n,$$
    that is
\begin{equation}\label{eq1210}
        m\leq \frac{\log n}{\log d}.
\end{equation}
which we assume in the rest of this section.
Note that the quantity of interest,
$$f - \hat{f}_{LS},$$
is unchanged if we subtract a constant from $f$. Thus, for convenience, we assume from now on in this section that
\begin{equation}
    \int f \, d\mu = 0.
\end{equation}
We define
$$A = (p_k(X_i))_{k,i} = \begin{pmatrix}
    p_0(X_1) & \dots & p_{D-1}(X_1) \\
    \vdots & & \vdots \\
    p_0(X_n) & \dots & p_{D-1}(X_n)
\end{pmatrix} \in \RR^{n \times D},$$
and
$$b = \begin{pmatrix}
    Y_1 \\
    \vdots \\
    Y_n
\end{pmatrix}.$$

We adopt the following useful notation: for a polynomial $P$ of degree at most $m$, we write
$$\boldsymbol{P} \in \RR^D$$
for the vector of its coordinates in the basis $(p_k)_{0 \leq k \leq D-1}$. A straightforward computation shows that
\begin{align}
    \boldsymbol{\hat{f}_{LS}} = (A^T A)^{-1} A^T b &= \left(\frac{1}{n} A^T A\right)^{-1} \frac{1}{n} A^T b. \label{eq580}
\end{align}

The vector $\frac{1}{n} A^T b$ is merely the vector of empirical scalar products, which, as in the previous section, we denote by $\boldsymbol{\hat{f}^*}$. For all $0 \leq k \leq D - 1$,
\begin{equation}\label{eq599}
    \boldsymbol{\hat{f}^*_k} = \left(\frac{1}{n} A^T b\right)_k = \frac{1}{n} \sum_{i=1}^n Y_i p_k(X_i).
\end{equation}
This is indeed the same definition as in (\ref{eq491}), since we assumed that
$$a = \int f \, d\mu = 0.$$
From the analysis carried out in Section \ref{subsec_projection}, we know that
\begin{equation}\label{eq1007}
\EE\norm{ \boldsymbol{P_mf}- \boldsymbol{\hat{f}^*}}^2_2 \leq \frac{(Cm^2+4\sigma^2)D}{n}
\end{equation}
where $P_mf$ is the projection of $f$ onto $\cP_{d,m}$ in $L^2(\mu)$. From now on, we assume that $n$ is large enough so that the right hand side in \eqref{eq643} is smaller than $1$. In particular, we also get
$$
    \EE\normmu{\boldsymbol{P_mf}-\hat{f}^*}^2 \lesssim 1. $$
    Since $\| f \|_{L^2(\mu)} \leq \Psi_{\mu}(0) = 1$,
\begin{equation}
\| \hat{f}^* \|_{L^2(\mu)} \leq  \| \boldsymbol{P_mf}-\hat{f}^*
\|_{L^2(\mu)} + \| \boldsymbol{P_mf} \|_{L^2(\mu)}
\leq C + \| f \|_{L^2(\mu)} \leq \tilde{C}.
\label{eq617}
\end{equation}
We denote
$$C_n = \frac{1}{n}A^TA.$$
The main technical step in this section is a moment bound on the deviation of $C_n^{-1}$ from the identity matrix, measured in operator norm.
\begin{lemma}\label{lem688}
Assume that $n\geq D$, then for all $1\leq p \leq \log D$,
\begin{equation*}
    \left(\EE\norm{C_n^{-1}-I_d}^p_{op}\right)^{2/p} \leq \frac{(C\log n)^{2m} D\log D}{n}
\end{equation*}
where $C>0$ is a universal constant.
\end{lemma}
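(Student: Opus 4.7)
The plan is to first control moments of $\|C_n - I_D\|_{op}$ by matrix concentration, then to transfer these to moments of $\|C_n^{-1} - I_D\|_{op}$ using the identity
$$C_n^{-1} - I_D = -C_n^{-1}(C_n - I_D).$$
Writing $E := C_n - I_D$, on the good event $\{\|E\|_{op} \leq 1/2\}$ a Neumann series gives $\|C_n^{-1}\|_{op} \leq 2$, hence $\|C_n^{-1} - I_D\|_{op} \leq 2\|E\|_{op}$; the complementary event will require separate treatment.

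For moments of $\|E\|_{op}$, I would decompose $E = \frac{1}{n}\sum_{i=1}^n (V_i V_i^T - I_D)$, where $V_i := (p_0(X_i), \ldots, p_{D-1}(X_i))^T \in \RR^D$ are i.i.d.\ random vectors, and apply a matrix Bernstein-type moment inequality. The two inputs are the variance proxy $\nu := \|\EE[\|V\|^2 VV^T] - I_D\|_{op}$ and a moment bound on $\max_i \|V_i V_i^T - I_D\|_{op}$. Both are furnished by Proposition \ref{prop:poly_concentration_logconcave}: the polynomial $\|V\|^2 = \sum_k p_k^2$ has degree $2m$ and $L^1(\mu)$-norm $D$, giving $\|\|V\|^2\|_{L^q(\mu)} \lesssim (Cq)^{2m} D$; and for any unit $u \in \RR^D$, the polynomial $u \cdot V$ has degree $m$ and $L^2(\mu)$-norm $1$, giving $\|u \cdot V\|_{L^4(\mu)} \lesssim C^m$. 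Cauchy--Schwarz yields $\nu \lesssim (C \log n)^{2m} D$, while the elementary bound $\EE \max_i X_i^q \leq n\, \EE X^q$ applied with $q = \log n$ gives $(\EE \max_i \|V_i\|^{2q})^{1/q} \lesssim (C \log n)^{2m} D$ for all $q \leq \log n$. The matrix moment inequality then yields, for $1 \leq p \leq \log D$,
$$\bigl(\EE \|E\|_{op}^p\bigr)^{1/p} \lesssim \sqrt{\frac{(C\log n)^{2m} D \log D}{n}},$$
provided the right-hand side is at most $1$, so that the linear-in-$1/n$ Bernstein term is absorbed into the subgaussian square-root one.

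To conclude, I would split
$$\EE \|C_n^{-1} - I_D\|_{op}^p = \EE\bigl[\|C_n^{-1} - I_D\|_{op}^p \, \ind_{\|E\| \leq 1/2}\bigr] + \EE\bigl[\|C_n^{-1} - I_D\|_{op}^p \, \ind_{\|E\| > 1/2}\bigr].$$
The first term is at most $2^p\, \EE \|E\|_{op}^p$, which already delivers the claimed bound. For the second, Markov applied at a large moment of $\|E\|$ gives $\PP(\|E\|_{op} > 1/2) \lesssim \rho^{p/2}$, where $\rho := (C\log n)^{2m} D \log D / n \leq 1$; this probability is then combined via Cauchy--Schwarz with a crude a priori estimate of $\EE \|C_n^{-1} - I_D\|_{op}^{2p}$, in which $\|C_n^{-1}\|_{op} = 1/\lambda_{\min}(C_n)$ is controlled using Carbery--Wright anti-concentration (Theorem \ref{thm:carbery}) applied to the quadratic form $u^T C_n u = \frac{1}{n}\sum_i (u \cdot V_i)^2$, together with a covering argument over $\mathbb{S}^{D-1}$.

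The hard part is the last step: since $\|C_n^{-1}\|_{op}$ can in principle be arbitrarily large when $\lambda_{\min}(C_n)$ is close to $0$, one cannot simply discard the event $\{\|E\|_{op} > 1/2\}$. Carbery--Wright gives the right small-ball estimate on $u^T C_n u$, but packaging the net argument into a moment bound on $1/\lambda_{\min}(C_n)$ with moderate dependence on $m$ and $D$ is delicate. A cleaner alternative would be to invoke a matrix Chernoff bound for the sum of independent rank-one PSD matrices $V_i V_i^T$, after truncating at the level suggested by polynomial concentration, which would directly produce an exponentially small tail for the event $\{\lambda_{\min}(C_n) \leq 1/2\}$ and close the estimate.
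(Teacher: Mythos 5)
Your first half — the bound on $\bigl(\EE\|C_n-I_D\|_{op}^p\bigr)^{2/p}$ — is sound, and it is essentially a repackaging of what the paper does. The paper uses symmetrization followed by Rudelson's lemma (non-commutative Khintchine) together with the polynomial moment estimates of Proposition~\ref{prop:poly_concentration_logconcave}, rather than matrix Bernstein directly, but both routes hinge on the same two ingredients: an $\ell_\infty$-to-$\ell_q$ passage at $q\simeq\log n$ for $\max_i\|V_i\|^2$, which is where the $(C\log n)^{2m}$ factor comes from, and the $\log D$ factor from the matrix deviation bound. (The paper also peels off the constant polynomial $p_0\equiv1$ and treats the first row/column of $C_n$ separately via Lemma~\ref{lem670}; your inclusion of $p_0$ in $V_i$ works too, since $p_0(X_i)-\EE p_0(X_i)=0$.) You should make explicit that the variance proxy does not itself carry a $\log n$ factor — the $\log n$ enters only through the moment of $\max_i\|V_i\|^2$.

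The second half — passing to $C_n^{-1}$ — is where there is a genuine gap, and in fact you flag it yourself without resolving it. Markov applied to the moment bound on $\|E\|_{op}$ gives only $\PP(\|E\|_{op}>1/2)\lesssim\rho^{p/2}$ with $\rho=(C\log n)^{2m}D\log D/n$, which is merely polynomially small (in $D$, say, if $p\simeq\log D$). Meanwhile, the ``crude a priori estimate'' $\EE\|C_n^{-1}-I_D\|_{op}^{2p}$ that you would need for Cauchy--Schwarz is large: Carbery--Wright together with a net on $\mathbb{S}^{D-1}$ gives $\PP(\lambda_{\min}(C_n)\leq t)\lesssim (1+2/t)^D(Cmt^{1/m})^n$, which is a nontrivial bound only for $t$ smaller than roughly $(Cm)^{-m}$; integrating $t^{-2p-1}$ over the range where the Carbery--Wright tail is trivial already yields a factor of order $(Cm)^{2pm}$, and this cannot be beaten by a tail probability that is only polynomially small. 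In short, Carbery--Wright alone controls $\lambda_{\min}$ near zero but says nothing useful about the intermediate regime $(Cm)^{-m}\lesssim\lambda_{\min}\lesssim1/2$, and that is precisely where your argument breaks.

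The paper addresses this by proving two separate tail estimates in Lemma~\ref{lem714}. For the intermediate regime it uses a Laplace-transform argument: for fixed $\theta$ the variable $V=(\tilde Z\cdot\theta)^2$ satisfies $\EE e^{-mV}\leq 1-e^{-cm}$ via Paley--Zygmund (using $\EE V^2\leq e^{cm}$ from Proposition~\ref{prop:poly_concentration_logconcave}), and then Markov on $e^{-mS_n}$ plus a net gives $\PP(\lambda_{\min}\leq e^{-c_0m})\leq\exp(-n/e^{c_1m})$ — exponentially small in $n$, which is what is actually needed to dominate the polynomial blow-up of the inverse moment. Only in the far regime $t\leq 1/n^2$ does Carbery--Wright come in, where it gives $\PP(\lambda_{\min}\leq t)\leq t^{n/16m}$, which makes the integral $\int_0^{1/n^2}t^{-p-1}\PP(\lambda_{\min}\leq t)\,dt$ converge to something negligible. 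So the ingredient you are missing is the Paley--Zygmund/Laplace bound; it, not Carbery--Wright, is the tool that controls the event $\{\|E\|_{op}>1/2\}$. Your final sentence suggesting a matrix Chernoff bound after truncation points in the right direction, since it would also give an exponential tail on $\lambda_{\min}$, but the truncation bias in $\EE[VV^T\ind_{\|V\|^2\leq R}]$ would still need to be controlled, and as written it is an unexecuted sketch rather than a proof.
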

Before proving this lemma, let us explain how it implies Theorem \ref{thm:avg_ls}. As a warm-up, we first prove a weaker statement:
\begin{equation}\label{eq571}
        \EE\norm{f - \hat{f}_{LS}}_2 \leq \Psi_\mu(m) + \sqrt{\frac{(C\log n)^{2m} D \log D}{n}} + \frac{2\sigma \sqrt{D}}{\sqrt{n}} .
\end{equation}

\subsubsection{Proof of \eqref{eq571}}
Since $\boldsymbol{\hat f^{*}}$ is given by \eqref{eq599}, we may write
\begin{align*}
    \EE\norm{\boldsymbol{P_m f}-\boldsymbol{\hat f_{LS}}}_2
    &\leq \EE\norm{\boldsymbol{P_m f}-\boldsymbol{\hat f^{*}}}_2 + \EE\norm{\boldsymbol{\hat f_{LS}}-\boldsymbol{\hat f^{*}}}_2\\
    &= \EE\norm{\boldsymbol{P_m f}-\boldsymbol{\hat f^{*}}}_2 + \EE\norm{(C_n^{-1}-I_d)\,\boldsymbol{\hat f^{*}}}_2\\
    &\leq \EE\norm{\boldsymbol{P_m f}-\boldsymbol{\hat f^{*}}}_2 + \EE\norm{C_n^{-1}-I_d}_{op}\,\norm{\boldsymbol{\hat f^{*}}}_2 .
\end{align*}
Now, using the Cauchy--Schwarz inequality and \eqref{eq617}, we bound the last term by
\[
\EE\norm{C_n^{-1}-I_d}_{op}\,\norm{\boldsymbol{\hat f^{*}}}_2
\;\lesssim\; \left(\EE\norm{C_n^{-1}-I_d}_{op}^2\right)^{1/2}.
\]
From Lemma \ref{lem688} with $p=2$, we know that
\begin{equation}\label{eq:l2_bound_inv_covariance}
    \EE\norm{C_n^{-1}-I_d}_{op}^2 \lesssim \frac{(C\log n)^{2m}D\log D}{n}.
\end{equation}
Putting everything together and using \eqref{eq1007}, we get
\begin{align*}
    \EE\normmu{f-\hat f_{LS}}
    &\leq \normmu{f-P_m f}+\EE\normmu{P_m f-\hat f_{LS}}\\
    &\leq \Psi_\mu(m) + \sqrt{\frac{(Cm^2+4\sigma^2)D}{n}} + \sqrt{\frac{(C\log n)^{2m}D\log D}{n}} \\
    &\leq \Psi_\mu(m) + \sqrt{\frac{(C\log n)^{2m}D\log D}{n}} + \frac{2\sigma \sqrt{ D}}{\sqrt{n}} ,
\end{align*}
where the constant $C$ may change from line to line, and we used $\sqrt{a+b}\leq \sqrt{a}+\sqrt{b}$.

\subsubsection{Proof of Theorem \ref{thm:avg_ls}}

The proof of \eqref{eq643} follows the same strategy, with one additional computation.
As before, we write
\begin{align*}
    \EE\normmu{f-\hat f_{LS}}^2
    &= \EE\normmu{f-P_m f}^2+\EE\normmu{P_m f-\hat f_{LS}}^2\\
    &\leq \Psi^2_\mu(m) + 2\,\EE\norm{\boldsymbol{P_m f}-\boldsymbol{\hat f^{*}}}_{2}^2 + 2\,\EE\norm{\boldsymbol{\hat f_{LS}}-\boldsymbol{\hat f^{*}}}_{2}^2 \\
    &\leq \Psi^2_\mu(m) + \frac{(Cm^2+8\sigma^2) D}{n} + 2\,\EE\norm{C_n^{-1}-I_d}_{op}^2\,\norm{\boldsymbol{\hat f^{*}}}_2^2 .
\end{align*}
Using H\"older's inequality with $p=\tfrac12\log D$ and $q=p^{*}$, and using Lemma \ref{lem688}, we bound
\begin{align*}
    \EE\norm{C_n^{-1}-I_d}_{op}^2\,\norm{\boldsymbol{\hat f^{*}}}_2^2
    &\leq \left(\EE\norm{C_n^{-1}-I_d}_{op}^{2p}\right)^{1/p}\left(\EE\norm{\boldsymbol{\hat f^{*}}}_2^{2q}\right)^{1/q}\\
    &\leq \frac{(C\log n)^{2m} D\log D}{n}\;\left(\EE\norm{\boldsymbol{\hat f^{*}}}_2^{2q}\right)^{1/q}.
\end{align*}
It remains to prove that
\begin{equation}\label{eq1062}
    \left(\EE\norm{\boldsymbol{\hat f^{*}}}_2^{2q}\right)^{1/q}\lesssim 1.
\end{equation}
We use a simple interpolation argument. Recall that, by \eqref{eq617},
\begin{equation}\label{eq1066}
    \EE\norm{\boldsymbol{\hat f^{*}}}_2^{2} \lesssim 1.
\end{equation}
Recall that $\sigma^2\leq d \leq D$. We claim the following crude bound on the fourth moment:
\begin{equation}\label{eq1070}
    \EE\norm{\boldsymbol{\hat f^{*}}}_2^{4} \lesssim D^{3}.
\end{equation}
Indeed,
\begin{align*}
    \EE\norm{\boldsymbol{\hat f^{*}}}_2^{4}
    &= \EE \Bigl(\sum_{k=0}^{D-1} (\hat f_k^{*})^2\Bigr)^2
      \;\leq\; D\sum_{k=0}^{D-1}\EE\bigl(\hat f_k^{*})^4.
\end{align*}
For any $0\leq k \leq D-1$,
\begin{align*}
    \EE\bigl((\hat f_k^{*})^4\bigr)
    &= \EE\!\left(\frac{1}{n}\sum_{i=1}^n Y_i\,p_k(X_i)\right)^{\!4}
      \;\leq\; \EE\!\left(Y_1^4\,p_k(X_1)^4\right) \\
    &\leq \bigl(\EE Y_1^8\bigr)^{1/2}\,\bigl(\EE |p_k(X_1)|^8\bigr)^{1/2}
      \;\lesssim\; \sigma^4\,C^{m}
      \;\lesssim\; D^2,
\end{align*}
where we used Propositions \ref{prop:lipschitz_concentration} and \ref{prop:poly_concentration_logconcave} and the growth assumption on $\sigma$.
Now, for any nonnegative random variable $X$ and any $q\in[1,2]$, by interpolation,
\[
\bigl(\EE X^{q}\bigr)^{1/q} \;\leq\; (\EE X)^{\,2/q - 1}\,(\EE X^{2})^{\,1 - 1/q}.
\]
Applying this to $X=\norm{\boldsymbol{\hat f^{*}}}_2^{2}$ and using \eqref{eq1066} and \eqref{eq1070}, we get
\begin{align*}
     \left(\EE\norm{\boldsymbol{\hat f^{*}}}_2^{2q}\right)^{1/q}
     &\lesssim (D^3)^{\,1 - 1/q}
      \;=\; D^{\,3/p}
      \;=\; D^{\,6/\log D}
      \;\lesssim\; 1,
\end{align*}
which proves \eqref{eq1062}.
In order to complete the proof of Theorem \ref{thm:avg_ls}, it remains to prove Lemma \ref{lem688}.

\subsubsection{Bounding $C_n^{-1}$}
The proof of Lemma \ref{lem688} consists of two steps. First, we prove the same bound but for $C_n$ rather than for its inverse.
\begin{lemma}\label{lem724}
Assume that $n\geq D$, then for all $1\leq p \leq \log D$,
\begin{equation*}
    \left(\EE\norm{C_n-I_d}_{op}^p\right)^{2/p} \leq \frac{(C\log n)^{2m} D\log D}{n}.
\end{equation*}
where $C>0$ is a universal constant.
\end{lemma}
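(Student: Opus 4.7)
The plan is to view $C_n - I_D$ as an empirical average of i.i.d.\ centered rank-one random matrices and apply symmetrization together with Tropp's matrix Khintchine (Rademacher) inequality, closing the loop with a self-bounding argument. I would introduce the random vector $Y_i := (p_0(X_i),\ldots,p_{D-1}(X_i))^T \in \RR^D$; orthonormality of the basis $(p_k)$ in $L^2(\mu)$ gives $\EE Y_i Y_i^T = I_D$, so that $n(C_n - I_D) = \sum_{i=1}^n (Y_i Y_i^T - I_D)$ is a sum of i.i.d.\ centered Hermitian matrices. A standard Rademacher symmetrization followed by the non-commutative Khintchine inequality applied conditionally on the $Y_i$ yields, for $p \geq 1$,
\[
\bigl(\EE \norm{C_n - I_D}_{op}^p\bigr)^{1/p} \;\lesssim\; \frac{\sqrt{p+\log D}}{n}\,\Bigl(\EE \Bigl\|\sum_{i=1}^n (Y_iY_i^T)^2\Bigr\|_{op}^{p/2}\Bigr)^{1/p}.
\]
Using the rank-one identity $(Y_iY_i^T)^2 = \norm{Y_i}^2 Y_iY_i^T$ together with the positive semi-definite ordering $\sum_i \norm{Y_i}^2 Y_iY_i^T \preceq (\max_i \norm{Y_i}^2)\sum_i Y_iY_i^T$, Cauchy--Schwarz, and the triangle bound $\norm{\sum_i Y_iY_i^T}_{op} \leq n(1+\norm{C_n - I_D}_{op})$, one obtains a self-bounding inequality: writing $u := (\EE\norm{C_n-I_D}^p_{op})^{1/p}$ and $M_p := (\EE\max_i\norm{Y_i}^{2p})^{1/p}$,
\[
u^2 \;\lesssim\; \frac{(p+\log D)\,M_p}{n}\,(1+u).
\]

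The remaining task is to control $M_p$. The key observation is that $\norm{Y}^2 = \sum_{k=0}^{D-1} p_k(X)^2$ is a nonnegative polynomial on $\RR^d$ of total degree at most $2m$, with $\EE \norm{Y}^2 = D$ by orthonormality. Applying the $L^q$-to-$L^1$ estimate of Proposition~\ref{prop:poly_concentration_logconcave} to this polynomial gives
\[
\bigl\| \norm{Y}^2 \bigr\|_{L^q(\mu)} \;\leq\; (C_1 q)^{2m}\,D \qquad\text{for all } q \geq 1.
\]
Combined with the union-type bound $\|\max_i Z_i\|_p \leq n^{1/q} \|Z\|_q$ (valid for any $q \geq p$, by Lyapunov together with a crude union bound), and optimizing at $q = \log n$ (which is $\geq p$ because $p \leq \log D \leq \log n$ whenever $n \geq D$), one arrives at $M_p \leq e\,(C_1 \log n)^{2m}\,D$. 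Plugging back, using $p + \log D \leq 2\log D$, and solving the quadratic $u^2 \lesssim A(1+u)$ with $A := \frac{(\log D)(C_1 \log n)^{2m} D}{n}$ yields $u^2 \lesssim A$ in the regime $A \leq 1$ (outside this regime the claimed bound is trivial), which is exactly the assertion of Lemma~\ref{lem724}.

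The main obstacle---and really the whole content beyond bookkeeping---is the polynomial moment bound $\bigl\|\norm{Y}^2\bigr\|_q \leq (C_1 q)^{2m} D$. This is where log-concavity of $\mu$ genuinely enters, through the Bourgain--Nazarov--Sodin--Volberg-type inequality recorded as Proposition~\ref{prop:poly_concentration_logconcave}; without it one would only get a much weaker bound. Everything else is standard matrix concentration machinery, coupled with the optimization $q = \log n$ which converts the $n^{1/q}$ factor coming from the crude union bound over the sample into a harmless universal constant.
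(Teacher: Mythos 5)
Your proof is correct and follows essentially the same strategy as the paper: Rademacher symmetrization, non-commutative Khintchine, the rank-one identity $(YY^T)^2 = \|Y\|^2 YY^T$, a self-bounding step via $\|\sum Y_iY_i^T\|_{op}\leq n(1+\|C_n-I_D\|_{op})$, and control of $(\EE\max_i\|Y_i\|^{2p})^{1/p}$ by converting the max into an $\ell_q$ norm at $q\asymp\log n$ and invoking the $L^q$--$L^1$ polynomial moment inequality. The one structural difference is that the paper first splits off the constant coordinate $p_0\equiv 1$: it defines $Z_i=(p_1(X_i),\dots,p_{D-1}(X_i))$, writes $C_n$ in $2\times2$ block form with empirical covariance $\Cov_n=\tfrac1n\sum Z_iZ_i^T$ in the lower-right corner, and bounds $\|C_n-I_D\|_{op}\le\|\tfrac1n\sum Z_i\|_2+\|\Cov_n-I_{D-1}\|_{op}$, treating the two pieces separately (Lemma~\ref{lem670}). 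You instead keep the full vector $Y_i=\tilde Z_i$ and bound $C_n-I_D$ directly. This is fine: $\EE Y_iY_i^T=I_D$, $\|Y\|^2=\sum_{k<D}p_k(X)^2$ is still a nonnegative degree-$2m$ polynomial with $\EE\|Y\|^2=D$, and the $\tfrac1{\sqrt n}$ fluctuation introduced in the $(0,0)$ entry after symmetrization is dominated by the target bound, so nothing is lost. Your route is if anything slightly more streamlined, skipping Lemma~\ref{lem670} and the separate estimate of $\EE\|\tfrac1n\sum Z_i\|_2^p$.

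One small inaccuracy: the bound is not ``trivially true'' when $A>1$, since $\|C_n-I_D\|_{op}$ has no a priori deterministic bound; rather, the self-bounding argument only yields the stated inequality in the regime $A\lesssim1$. The paper has the same implicit restriction (it appears explicitly as the hypothesis of Corollary~\ref{cor:l2_bound_general_covariance} and as a standing assumption in Theorem~\ref{thm:avg_ls}), so this does not affect the validity of the argument in context, but the phrasing should be ``we may assume $A\leq1$, since Lemma~\ref{lem724} is only applied under this hypothesis,'' not that the bound is otherwise trivial.
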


In order to prove Lemma \ref{lem724}, we first unpack the definition of $C_n$. For $1\leq i \leq n$, define i.i.d random vectors
\begin{equation}
    Z_i =
    \begin{pmatrix}
    p_1(X_i) \\
    \vdots \\
    p_{D-1}(X_i)
    \end{pmatrix}
    \in\RR^{D-1} \qquad \qquad \qquad (i=1,\ldots,n).
\end{equation}
Notice that we did not include the term $p_0(X_i)=1$ corresponding to the constant polynomial. We write $Z$ for a random vector with the same law as $Z_1$. Then $Z$ is isotropic:
\[
\EE Z = 0, \qquad \EE ZZ^\top = I_d.
\]
We denote the empirical covariance of $Z$ by
\begin{equation}\label{eq645}
\Cov_n = \frac{1}{n}\sum_{i=1}^n Z_iZ_i^\top.
\end{equation}
We also write $\Tilde{Z_i}$ for the full vector
\begin{equation}\label{eq649}
    \Tilde{Z_i} =
    \begin{pmatrix}
    1 \\
    Z_i
    \end{pmatrix} \in \RR^D.
\end{equation}
We may then express $C_n$ as
\begin{equation}\label{eq654}
\begin{aligned}
    C_n &= \frac{1}{n}AA^\top
    = \frac{1}{n} \sum_{i=1}^n \Tilde{Z_i}\Tilde{Z_i}^\top \\[0.7em]
    &=
    \renewcommand{\arraystretch}{1.3}
    \begin{pmatrix}
    1 & \frac{1}{n} \sum_{i=1}^n Z_i^\top \\
    \frac{1}{n} \sum_{i=1}^n Z_i & \Cov_n
    \end{pmatrix}
    \renewcommand{\arraystretch}{1}.
\end{aligned}
\end{equation}
From \eqref{eq654} and \eqref{eq649}, we easily deduce the following lemma.
\begin{lemma}\label{lem670}
    Let $C_n$ and $\Cov_n$ be defined by \eqref{eq654} and \eqref{eq645}, respectively. Then,
    \begin{equation}\label{eq672}
        \norm{C_n-I_d}_{op} \leq \Bigl\|\frac{1}{n}\sum_{i=1}^n Z_i\Bigr\| \;+\; \norm{\Cov_n - I_d}_{op}.
    \end{equation}
\end{lemma}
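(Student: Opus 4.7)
The plan is a direct block-matrix computation combined with the triangle inequality for the operator norm. From the representation \eqref{eq654} together with \eqref{eq649}, and writing $v := \tfrac{1}{n}\sum_{i=1}^n Z_i \in \RR^{D-1}$, I would first observe that
\[
C_n - I_D \;=\; \begin{pmatrix} 0 & v^\top \\ v & \Cov_n - I_{D-1} \end{pmatrix}
\;=\; \begin{pmatrix} 0 & v^\top \\ v & 0 \end{pmatrix}
\;+\; \begin{pmatrix} 0 & 0 \\ 0 & \Cov_n - I_{D-1} \end{pmatrix}.
\]
Here I am reading the symbol $I_d$ in the statement as the identity of the ambient dimension $D$; the identity matrix in the lower-right block is of size $D-1$.

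By the triangle inequality for the operator norm,
\[
\norm{C_n - I_D}_{op} \;\leq\; \Bigl\|\begin{pmatrix} 0 & v^\top \\ v & 0 \end{pmatrix}\Bigr\|_{op} + \Bigl\|\begin{pmatrix} 0 & 0 \\ 0 & \Cov_n - I_{D-1} \end{pmatrix}\Bigr\|_{op}.
\]
The operator norm of the second matrix is exactly $\norm{\Cov_n - I_{D-1}}_{op}$, so it remains to identify the operator norm of the first block matrix $M := \left(\begin{smallmatrix} 0 & v^\top \\ v & 0 \end{smallmatrix}\right)$ with $\|v\|$.

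For this last step, the quickest route is to apply $M$ to a unit vector $u = (u_0, u_1) \in \RR \times \RR^{D-1}$ with $u_0^2 + \|u_1\|^2 = 1$: one computes $Mu = (v^\top u_1,\, u_0 v)$, so that by Cauchy--Schwarz,
\[
\|Mu\|^2 \;=\; (v^\top u_1)^2 + u_0^2\,\|v\|^2 \;\leq\; \|v\|^2\,\|u_1\|^2 + u_0^2\,\|v\|^2 \;=\; \|v\|^2,
\]
and equality holds for $u_0 = 1/\sqrt{2}$, $u_1 = v/(\sqrt{2}\|v\|)$ (when $v\neq 0$). Hence $\|M\|_{op} = \|v\|$, and combining the two bounds yields precisely \eqref{eq672}. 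There is no real obstacle here; the only point requiring a tiny bit of care is the off-diagonal block norm identity, which the short unit-vector argument above handles cleanly.
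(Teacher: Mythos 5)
Your proof is correct and is precisely the argument that the paper leaves implicit: the paper states that Lemma~\ref{lem670} is ``easily deduced'' from the block structure \eqref{eq654} and \eqref{eq649} without writing out the details. Your block decomposition, the triangle inequality, and the direct computation showing the off-diagonal block $\left(\begin{smallmatrix} 0 & v^\top \\ v & 0 \end{smallmatrix}\right)$ has operator norm $\|v\|$ are exactly the filling-in one would expect; you also correctly note that $I_d$ in the statement should be read as the identity of the ambient dimension $D$.
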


In what follows, we bound the $p$-th moment of the operator norm of $\Cov_n - I_d$.
We use Rudelson's lemma \cite{rudelson1999random}, relying on  the non-commutative Khintchine inequality of Lust-Picard and Pisier (see \cite[Theorem 9.8.2]{pisier2003introduction}). Inequality (3.4) in
\cite{rudelson1999random} reads as follows:

\begin{lemma}\label{lem:l2_rudelson}
    Let $x_1,\dots,x_n$ be vectors in $\RR^D$, and let $\epsilon_1,\dots,\epsilon_n$ be i.i.d. symmetric Bernoulli   variables. Then for any $p\leq\log D$,
    \[
    \Bigl(\EE\norm{\sum_i \epsilon_i x_i\otimes x_i}_{op}^p\Bigr)^{2/p}
    \;\leq\; C\log D \,\max_i \norm{x_i}^2 \,\norm{\sum_i x_i\otimes x_i}_{op}.
    \]
\end{lemma}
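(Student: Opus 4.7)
The approach I would take follows Rudelson's original argument and reduces the estimate to the non-commutative Khintchine inequality of Lust-Piquard and Pisier, applied to the self-adjoint rank-one matrices $A_i := x_i \otimes x_i \in \RR^{D\times D}$. Specifically, I would use Khintchine in the form: for every $q\geq 2$,
$$\left(\EE \norm{\sum_i \epsilon_i A_i}_{S_q}^q\right)^{1/q} \;\leq\; C\sqrt{q}\, \norm{\left(\sum_i A_i^2\right)^{1/2}}_{S_q},$$
where $\norm{\cdot}_{S_q}$ denotes the Schatten $q$-norm. The whole proof is then a matter of turning the right-hand side, which involves a Schatten norm of a matrix depending on the $A_i$, into the desired quantities $\max_i\norm{x_i}^2$ and $\norm{\sum_i x_i\otimes x_i}_{op}$.

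The main simplification comes from the rank-one structure. A direct check gives $(x_i\otimes x_i)^2 = \norm{x_i}^2\, x_i\otimes x_i$, so
$$\sum_i A_i^2 \;=\; \sum_i \norm{x_i}^2\, x_i\otimes x_i,$$
and since this is a positive combination of PSD rank-one operators, its operator norm satisfies
$$\norm{\sum_i A_i^2}_{op} \;\leq\; \max_i \norm{x_i}^2 \cdot \norm{\sum_i x_i\otimes x_i}_{op}.$$
Next I would pass from Schatten to operator norm on both sides of Khintchine: on the left using $\norm{M}_{op} \leq \norm{M}_{S_q}$, and on the right using $\norm{M}_{S_q} \leq D^{1/q}\norm{M}_{op}$ (since the matrices act on $\RR^{D}$) applied to $(\sum_i A_i^2)^{1/2}$, whose operator norm equals $\norm{\sum_i A_i^2}_{op}^{1/2}$. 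Chaining these steps gives
$$\left(\EE \norm{\sum_i \epsilon_i x_i\otimes x_i}_{op}^q\right)^{1/q} \;\leq\; C\sqrt{q}\, D^{1/q}\, \sqrt{\max_i \norm{x_i}^2 \cdot \norm{\sum_i x_i\otimes x_i}_{op}}.$$

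Finally, I would calibrate $q = \log D$ so that $D^{1/q} = e$ is a universal constant, then square both sides to obtain the claim at the endpoint $p = \log D$. For any $p \leq \log D$, Jensen's inequality on moments of a non-negative random variable transfers the bound with no loss in constant, covering the full range stated in the lemma. The main obstacle in this blueprint is really the invocation of the Lust-Piquard--Pisier inequality, a deep black-box result from non-commutative $L^p$ theory; once it is granted, the rank-one identity $A_i^2 = \norm{x_i}^2 A_i$ and the tuned choice $q=\log D$ make the rest a careful bookkeeping exercise. The restriction $p\leq \log D$ is intrinsic to this method: for $p$ larger than $\log D$, the factor $D^{1/q}$ begins to dominate and the argument can no longer deliver a linear dependence on $\log D$.
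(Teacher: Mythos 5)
Your proposal is correct and is essentially a faithful reconstruction of Rudelson's original argument, which the paper invokes by citation to \cite{rudelson1999random} without reproducing the proof. The key steps — the non-commutative Khintchine inequality, the rank-one identity $(x_i\otimes x_i)^2 = \norm{x_i}^2\,x_i\otimes x_i$, the Schatten-to-operator-norm comparisons, and the calibration $q = \log D$ followed by Jensen to cover $p \le \log D$ — are exactly the route Rudelson takes, so your blind derivation matches the paper's (implicit) proof.
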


As in Rudelson's paper, the lemma is used to bound the deviation of the empirical covariance from its expectation.
\begin{corollary}\label{cor:l2_bound_general_covariance}
    Let $\Cov_n$ be defined by \eqref{eq645}. Whenever the right-hand side is smaller than $1$,
    \[
    \Bigl(\EE\norm{\Cov_n-I_d}^p\Bigr)^{2/p} \;\leq\; \frac{C\log D}{n}\,\Bigl(\EE\max_i |Z_i|^{2p}\Bigr)^{1/p}.
    \]
\end{corollary}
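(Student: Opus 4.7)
The plan is to follow the classical symmetrization--Rademacher--Rudelson template for bounding the deviation of an isotropic empirical covariance from its mean. Since $\EE Z_i Z_i^\top = I_d$, I would first introduce an independent copy $(Z_i')_{i=1}^n$ of $(Z_i)_{i=1}^n$ together with i.i.d.\ Rademacher signs $\varepsilon_1,\dots,\varepsilon_n$ independent of everything else, and apply Jensen's inequality to the convex map $T\mapsto \|T\|_{op}^p$ to obtain
\[
\EE\|\Cov_n - I_d\|_{op}^p \;\leq\; \EE\Bigl\|\tfrac{1}{n}\sum_{i=1}^n \bigl(Z_i Z_i^\top - Z_i'(Z_i')^\top\bigr)\Bigr\|_{op}^p \;\leq\; 2^p\,\EE\Bigl\|\tfrac{1}{n}\sum_{i=1}^n \varepsilon_i\, Z_i Z_i^\top\Bigr\|_{op}^p,
\]
where the second inequality is the standard Rademacher symmetrization obtained by swapping pairs $Z_i \leftrightarrow Z_i'$ with $\varepsilon_i$ and using the triangle inequality.

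Next I would apply Lemma~\ref{lem:l2_rudelson} with $x_i = Z_i$ \emph{conditionally} on $(Z_i)_{i=1}^n$, which is legitimate because the hypothesis $p\leq \log D$ is part of our range. Taking expectation over $(Z_i)$ and applying Cauchy--Schwarz to decouple the two random factors produced by Rudelson's lemma, I arrive at
\[
\Bigl(\EE\|\Cov_n - I_d\|_{op}^p\Bigr)^{2/p} \;\leq\; \frac{C\log D}{n}\,\Bigl(\EE \max_i |Z_i|^{2p}\Bigr)^{1/p}\,\Bigl(\EE\Bigl\|\tfrac{1}{n}\sum_{i=1}^n Z_i Z_i^\top\Bigr\|_{op}^p\Bigr)^{1/p}.
\]

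The hard part will be the fact that the last factor above is essentially the $L^p$-norm of $\|\Cov_n\|_{op}$ itself, so the inequality is \emph{self-referential}. To close it, I would use the bound $\|\Cov_n\|_{op} \leq 1 + \|\Cov_n - I_d\|_{op}$, and, setting
\[
R := \bigl(\EE\|\Cov_n - I_d\|_{op}^p\bigr)^{1/p}, \qquad A := \tfrac{C\log D}{n}\bigl(\EE\max_i|Z_i|^{2p}\bigr)^{1/p},
\]
reduce everything to the quadratic-type inequality $R^2 \leq A\,(1+R)$. A short bootstrap then yields $R^2 \lesssim A$ under the assumption that $A$ is smaller than a suitable absolute constant: indeed, if $R \geq 1$ the inequality forces $R \leq 2A$, contradicting $R\geq 1$ once $A$ is small; hence $R\leq 1$, and plugging this back gives $R^2 \leq 2A$. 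This delivers the stated bound with a possibly enlarged universal constant $C$.
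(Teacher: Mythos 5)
Your proposal is correct and follows essentially the same route as the paper: symmetrize via an independent copy and Rademacher signs, apply Rudelson's Lemma~\ref{lem:l2_rudelson} conditionally on the $Z_i$'s, decouple with Cauchy--Schwarz to get a self-referential inequality, and bootstrap to close it. The only cosmetic difference is that you close the loop via Minkowski's inequality on $\|\Cov_n\|_{op}\leq 1+\|\Cov_n-I_d\|_{op}$, arriving at $R^2\leq A(1+R)$ with $R=\bigl(\EE\|\Cov_n-I_d\|_{op}^p\bigr)^{1/p}$, whereas the paper applies the convexity bound $(a+b)^p\leq 2^{p-1}(a^p+b^p)$ to $\EE\|\sum_i Z_i\otimes Z_i\|_{op}^p$ and closes the loop in terms of $S_p=\EE\|\Cov_n-I_d\|_{op}^p$, reaching $S_p\leq\lambda(1+S_p)^{1/2}$; the two forms are equivalent up to constants and both bootstrap in the same way under the hypothesis that the right-hand side is below~$1$.
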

We need a standard symmetrization lemma.
\begin{lemma}
Let $(X_i)_{i\in I}$ be a finite sequence of independent random
vectors in some Banach space, and let $\varepsilon_i$ be independent symmetric Bernoulli random variables. Then, for any $p\geq 1$,
\[
\EE\norm{\sum_{i\in I}X_i - \EE X_i}^p \;\leq\; 2^p\,\EE\norm{\sum_{i\in I}\varepsilon_iX_i}^p.
\]
\end{lemma}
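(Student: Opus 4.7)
The plan is to apply the standard decoupling-by-independent-copies argument. First I would introduce an independent copy $(X_i')_{i\in I}$ of $(X_i)_{i\in I}$, with the two families mutually independent, living on a product probability space. Since $\EE X_i' = \EE X_i$, we can rewrite the centering as a conditional expectation:
\[
\sum_{i\in I} X_i - \EE X_i \;=\; \EE'\!\left[\sum_{i\in I}(X_i - X_i')\right],
\]
where $\EE'$ denotes expectation with respect to the primed variables, with the $X_i$ held fixed. Applying Jensen's inequality to the convex map $v \mapsto \norm{v}^p$ and then taking the full expectation gives
\[
\EE\norm{\sum_{i\in I} X_i - \EE X_i}^p \;\leq\; \EE\norm{\sum_{i\in I}(X_i - X_i')}^p.
\]

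Next I would exploit symmetry. Each difference $X_i - X_i'$ is a symmetric random vector (distributed as its own negative), and the family $(X_i - X_i')_{i\in I}$ is independent, so for any deterministic signs $\eta_i \in \{\pm 1\}$ the vector $(X_i - X_i')_i$ has the same distribution as $(\eta_i(X_i - X_i'))_i$. This identity persists when the signs are i.i.d.\ symmetric Bernoulli variables $(\varepsilon_i)$ independent of everything else, yielding
\[
\EE\norm{\sum_{i\in I}(X_i - X_i')}^p \;=\; \EE\norm{\sum_{i\in I}\varepsilon_i(X_i - X_i')}^p.
\]

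Finally, I would apply the triangle inequality together with the convexity bound $(a+b)^p \leq 2^{p-1}(a^p+b^p)$ to split the symmetrized sum:
\[
\EE\norm{\sum_{i\in I}\varepsilon_i(X_i - X_i')}^p \;\leq\; 2^{p-1}\Bigl(\EE\norm{\sum_{i\in I}\varepsilon_i X_i}^p + \EE\norm{\sum_{i\in I}\varepsilon_i X_i'}^p\Bigr).
\]
Both terms on the right are equal by distributional identity of $(X_i)$ and $(X_i')$, so the right-hand side is at most $2^p\,\EE\norm{\sum_{i\in I}\varepsilon_i X_i}^p$. Chaining the three displayed inequalities gives the claim.

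There is no real obstacle here; this is a textbook symmetrization lemma and each step is a one-line application of Jensen, symmetry, or convexity. The only subtlety worth being careful about is ensuring that the Bernoulli variables $\varepsilon_i$ are taken independent of both $(X_i)$ and $(X_i')$, so that the distributional identity in the second step is valid after randomizing over $\varepsilon_i$.
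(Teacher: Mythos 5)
Your proof is correct and follows essentially the same route as the paper's: introduce an independent copy, apply Jensen to the conditional expectation over the primed variables, use symmetry of the differences to introduce Rademacher signs, and finish with the triangle inequality plus the bound $(a+b)^p\le 2^{p-1}(a^p+b^p)$. The paper presents exactly these steps, just more tersely.
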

\begin{proof}
    We set
    \[
    \Tilde{X_i} = X_i-X_i',
    \]
    where $X_i'$ is an independent copy of $X_i$. By Jensen's inequality,
    \begin{align*}
        \EE\norm{\sum_{i\in I}X_i - \EE X_i}^p
        &\leq \EE\norm{\sum_{i\in I}\Tilde{X_i}}^p \\
        &= \EE\norm{\sum_{i\in I}\varepsilon_i\Tilde{X_i}}^p\\
        &\leq 2^{p-1}\EE\!\left( \norm{\sum_{i\in I}\varepsilon_iX_i }^p+\norm{\sum_{i\in I}\varepsilon_iX_i' }^p \right)\\
        &=2^p\,\EE\norm{\sum_{i\in I}\varepsilon_iX_i}^p.
    \end{align*}
\end{proof}

We can now prove Corollary \ref{cor:l2_bound_general_covariance}.
\begin{proof}[Proof of Corollary \ref{cor:l2_bound_general_covariance}]
    We set
    \[
    S_p = \EE\norm{\Cov_n-I_d}_{op}^p,
    \]
    the quantity of interest. The first step is to use the symmetrization lemma:
    \begin{align*}
        S_p &= \EE\norm{\frac{1}{n}\sum_{i=1}^n (Z_i\otimes Z_i-\EE Z_i\otimes Z_i)}_{op}^p\\
        &\leq \frac{2^p}{n^p}\EE\norm{\sum_{i=1}^n\varepsilon_iZ_i\otimes Z_i}_{op}^p.
    \end{align*}
    We then apply Rudelson's lemma, conditionally on the $Z_i$’s and then take expectation over the $Z_i$’s, to obtain
    \begin{align}
        S_p &\leq \frac{C^p}{n^p}\log(D)^{p/2}\,\EE\!\left(\max_i\norm{Z_i}^p\,\norm{\sum_{i}Z_i\otimes Z_i}_{op}^{p/2}\right) \notag\\
        &\leq \frac{C^p}{n^p}\log(D)^{p/2}\,\Bigl(\EE\max_i\norm{Z_i}^{2p}\Bigr)^{1/2}\,\Bigl(\EE\norm{\sum_{i}Z_i\otimes Z_i}_{op}^{p}\Bigr)^{1/2}, \label{eq809}
    \end{align}
    where we used Cauchy--Schwarz. Now, observe that
    \begin{align*}
        \EE\norm{\sum_i Z_i\otimes Z_i}_{op}^p
        &= n^p\,\EE\norm{I_d+\frac{1}{n}\sum_i (Z_i\otimes Z_i - I_d)}_{op}^p\\
        &\leq 2^{p-1}n^p\Bigl(1+\EE\norm{\frac{1}{n}\sum_i (Z_i\otimes Z_i - I_d)}_{op}^p\Bigr)\\
        &= 2^{p-1}n^p(1+S_p).
    \end{align*}
    Plugging this back into \eqref{eq809}, we find that
    \begin{equation}\label{eq818}
        S_p \leq \lambda(1+S_p)^{1/2},
    \end{equation}
    where
    \[
    \lambda = \frac{(C\log D)^{p/2}}{n^{p/2}}\,\Bigl(\EE\max_i |Z_i|^{2p}\Bigr)^{1/2}.
    \]
    Distinguishing the cases $S_p\leq 1$ and $S_p\geq 1$, one obtains
    \[
        S_p\leq 2\max(\lambda,\lambda^2).
    \]
    Thus, in particular, if $\lambda\leq 1$, we conclude that
    \[
        \left(\EE\norm{\Cov_n-I_d}_{op}^p\right)^{2/p}
        \leq \lambda^{2/p}
        = \frac{C\log D}{n}\,\Bigl(\EE\max_i |Z_i|^{2p}\Bigr)^{1/p},
    \]
    which is the desired bound.
\end{proof}

We now need an estimate on
\[
\left(\EE\max_i |Z_i|^{2p}\right)^{1/p}.
\]
The $\ell_\infty$ norm on $\RR^n$  is equivalent to the $\ell_q$ norm for $q=2\log n$, up to a universal constant. For this choice of $q$, notice that $2p/q\leq 1$. By Jensen's inequality,
\begin{align*}
    \left(\EE\max_i\norm{Z_i}^{2p}\right)^{1/p}
    &\lesssim \left(\EE\Bigl(\sum_{i=1}^n |Z_i|^{q}\Bigr)^{2p/q}\right)^{1/p} \\
    &\lesssim \bigl(n\,\EE |Z_1|^{q}\bigr)^{2/q} \\
    &\lesssim \bigl(\EE |Z_1|^{q}\bigr)^{2/q}.
\end{align*}
Finally, the random variable $Q=|Z_1|^2$ is a degree $2m$ polynomial in log-concave variables with
\[
\EE |Q| = \EE Q = \EE|Z_1|^2 = D-1.
\]
By Proposition \ref{prop:poly_concentration_logconcave}, we obtain
\begin{align*}
    \bigl(\EE|Z_1|^{q}\bigr)^{2/q}
    &= \bigl(\EE Q^{q/2}\bigr)^{2/q} \\
    &\leq (Cq/2)^{2m}\,D \\
    &= D(C\log n)^{2m}.
\end{align*}
At this point, we have established the bound (whenever the right-hand side is smaller than $1$):
\begin{equation}
    \left(\EE\norm{\Cov_n-I_d}_{op}^p\right)^{2/p} \leq \frac{(C\log n)^{2m} D\log D}{n}.
\end{equation}
In view of Lemma \ref{lem670}, we have, for any $p\geq 1$,
\begin{align}
    \left(\EE\norm{C_n-I_d}_{op}^p\right)^{2/p}
    &\leq 2\left(\EE\norm{\Cov_n-I_d}_{op}^p\right)^{2/p}
      + 2\EE \left| \frac{1}{n}\sum_{i=1}^n Z_i \right|_2^p \notag\\
    &\leq \frac{(C\log n)^{2m} D\log D}{n}
      + 2\EE \left| \frac{1}{n}\sum_{i=1}^n Z_i \right|_2^p. \label{eq873}
\end{align}
Thus, we need to upper-bound
\[
\EE\norm{\frac{1}{n}\sum_{i=1}^n Z_i}_2^{p}.
\]
First, for $p=2$ we have
\[
\EE \left| \frac{1}{n}\sum_{i=1}^n Z_i \right|^2 = \frac{D-1}{n}\leq \frac{D}{n}.
\]
For general $p$, consider the random variable
\[
\tilde{Q} = \left| \frac{1}{n}\sum_{i=1}^n Z_i \right|_2^2.
\]
It is a polynomial of degree $2m$ in the log-concave variables $X_i$. Furthermore, from the case $p=2$, we know that
\[
 \EE |\tilde{Q}| = \EE \tilde{Q} \leq \frac{D}{n}.
\]
Using again the moment inequality for polynomials (Proposition \ref{prop:poly_concentration_logconcave}), we find that
\[
\left(\EE \left| \tfrac{1}{n}\sum_{i=1}^n Z_i \right|_2^p\right)^{2/p}
= \left(\EE \tilde{Q}^{p/2}\right)^{2/p}
\leq (Cp)^{2m} \EE |\tilde{Q}|
\leq \frac{(C\log n)^{2m}D}{n},
\]
where we used that $D\leq n$.
Plugging this inequality into \eqref{eq873} finally proves Lemma \ref{lem724}:
\[
 \left(\EE\norm{C_n-I_d}_{op}^p\right)^{2/p} \leq \frac{(C\log n)^{2m} D\log D}{n}.
\]
\qed

It remains to pass from an inequality on $C_n$ to a corresponding inequality for its inverse. We shall thus need an integrable bound on the probability that the smallest eigenvalue of $C_n$ is small.

\medskip
Recall that the covariance matrix is given by
\[
C_n = \frac{1}{n}\sum_{i=1}^n \tilde{Z}_i\otimes \tilde{Z}_i,
\]
where $\tilde{Z}_i=(p_k(X_i))_{0\leq k \leq D-1}$. For any $\theta\in\mathbb{S}^{D-1}$, $\tilde{Z}_i\cdot\theta$ is a polynomial of degree at most $m$ with
\[\EE \abs{\tilde{Z}_i\cdot\theta}^2=1. \]
The Carbery-Wright Theorem (\ref{thm:carbery}) implies the following small-ball property.
\begin{lemma}\label{lem:small_ball}
    For any $\theta\in \mathbb{S}^{D-1}$ and any $t\geq 0$,
    \[
    \PP\bigl(|\tilde{Z}\cdot\theta| \leq t\bigr) \leq C m\,t^{1/m}.
    \]
\end{lemma}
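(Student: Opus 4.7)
The plan is to realize $\tilde Z\cdot\theta$ as a scalar polynomial of degree at most $m$ evaluated at the log-concave random vector $X\sim\mu$, and then to apply the Carbery--Wright anti-concentration theorem (Theorem \ref{thm:carbery}) directly. The only work to do is to verify the normalization hypothesis $\EE P^2(X)=1$ required by that theorem.

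First I would unwind the definitions. Recall from \eqref{eq649} that $\tilde Z=(1,p_1(X),\dots,p_{D-1}(X))^\top$, so setting $p_0\equiv 1$ we can write
\[
\tilde Z\cdot\theta \;=\; \sum_{k=0}^{D-1}\theta_k\, p_k(X) \;=:\; P_\theta(X),
\]
which is a polynomial of degree at most $m$ in $X$. Since $(p_k)_{0\leq k\leq D-1}$ is an orthonormal basis of $\mathcal{P}_{d,m}\subseteq L^2(\mu)$ and $\theta\in\mathbb{S}^{D-1}$, Parseval's identity immediately gives
\[
\EE\,P_\theta(X)^2 \;=\; \sum_{k=0}^{D-1}\theta_k^2 \;=\; \|\theta\|^2 \;=\; 1.
\]

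Finally, since $X$ is distributed according to the log-concave measure $\mu$, Theorem \ref{thm:carbery} applies to $P_\theta$ and yields
\[
\PP\bigl(|\tilde Z\cdot\theta|\leq t\bigr) \;=\; \PP\bigl(|P_\theta(X)|\leq t\bigr) \;\leq\; C\,m\,t^{1/m},
\]
which is the desired bound. There is no genuine obstacle here; the lemma is essentially a reformulation of Carbery--Wright in the orthonormal-polynomial coordinate system, and the only substantive step is the normalization check, which follows from the fact that the $p_k$'s were chosen to be $L^2(\mu)$-orthonormal.
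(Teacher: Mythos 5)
Your proposal is correct and is precisely the argument the paper uses: identify $\tilde Z\cdot\theta$ as a degree-$\leq m$ polynomial in $X$, note that orthonormality of $(p_k)$ plus $\|\theta\|_2=1$ gives $\EE(\tilde Z\cdot\theta)^2=1$, and invoke Carbery--Wright (Theorem~\ref{thm:carbery}). Nothing further is needed.
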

In the sequel, we work in the setting of Theorem \ref{thm:avg_ls}. In particular, we may assume that $n\geq C_0^m D$ for some sufficiently large constant $C_0$ and $m\geq 1$.
We control the tails of $\lambda_{\min}(C_n)$ in two regimes:
\begin{lemma}\label{lem714}
    Assume as we may that $n\geq C_0^m D$ for some sufficiently large $C_0$. Then there exist universal constants $c_0,c_1,c_2$ such that
    \[
    \PP(\lambda_{\min}(C_n) \leq e^{-c_0m}) \leq \exp\!\Bigl(-\tfrac{n}{e^{c_1m}}\Bigr).
    \]
    Furthermore, for $t\leq 1/n^2$,
    \[
    \PP(\lambda_{\min}(C_n) \leq t) \leq t^{n/16m}.
    \]
\end{lemma}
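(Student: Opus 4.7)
The starting point for both bounds is the small--ball estimate on a fixed direction. For any $\theta\in\mathbb{S}^{D-1}$, the quantity $\tilde Z\cdot\theta=\theta_0+\sum_{k\geq1}\theta_k p_k(X)$ is a polynomial of degree at most $m$ in the log--concave vector $X$ with unit second moment, so Lemma \ref{lem:small_ball} gives
\[
\PP(|\tilde Z\cdot\theta|^2\leq s)\;\leq\;Cm\,s^{1/(2m)}\qquad(s>0).
\]
The plan is to upgrade this pointwise control to a uniform lower bound on $\theta^\top C_n\theta$ via two different net arguments, one for each regime.

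For the first bound, I pick a threshold $s_0$ of the form $e^{-c_0 m}$ with $c_0$ large enough that the small--ball bound above is at most $1/4$. For each fixed $\theta$, the indicators $\ind\{|\tilde Z_i\cdot\theta|^2\geq s_0\}$ are then i.i.d.\ Bernoullis of mean at least $3/4$, and Hoeffding's inequality yields $\PP(\theta^\top C_n\theta\leq s_0/2)\leq e^{-cn}$. An $\varepsilon$-net of $\mathbb{S}^{D-1}$ with radius $\varepsilon\simeq s_0/\|C_n\|_{op}$, together with the identity $\theta^\top C_n\theta-(\theta')^\top C_n\theta'=(\theta-\theta')^\top C_n(\theta+\theta')$ and the high--probability event $\{\|C_n\|_{op}\leq 2\}$ from Lemma \ref{lem724} (applied with $p=\log D$), transfers the pointwise bound to the whole sphere. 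The resulting net has size $e^{O(Dm)}$, which under the standing hypothesis $n\geq C_0^m D$ is absorbed into $e^{-cn}$, yielding the first claim after a final adjustment of $c_0,c_1$.

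For the second bound, the $e^{-cn}$ from Hoeffding is too weak to beat $t^{n/(16m)}$ when $t$ is small, so I use a ``bad--set'' argument: if $\lambda_{\min}(C_n)\leq t$, then there exist $\theta\in\mathbb{S}^{D-1}$ and $I\subset[n]$ with $|I|\geq n/2$ such that $|\tilde Z_i\cdot\theta|^2\leq 2t$ for every $i\in I$. For fixed $I$ and $\theta$, independence combined with the small--ball bound yields
\[
\PP\bigl(\forall i\in I,\;|\tilde Z_i\cdot\theta|^2\leq 2t\bigr)\;\leq\;\bigl(Cm(2t)^{1/(2m)}\bigr)^{n/2}.
\]
Union--bounding over the $\leq 2^n$ subsets $I$ and over an $\varepsilon$-net of $\mathbb{S}^{D-1}$ of radius $\varepsilon\simeq\sqrt t/M_0$, where $M_0\simeq\sqrt D(\log n)^m$ is a high--probability upper bound on $\max_i\|\tilde Z_i\|_2$ obtained from Proposition \ref{prop:poly_concentration_logconcave} and a union bound, and performing the usual approximation step (transforming $|\tilde Z_i\cdot\theta|^2\leq 2t$ into $|\tilde Z_i\cdot\theta'|^2\leq 6t$ on the net), gives
\[
\PP(\lambda_{\min}(C_n)\leq t)\;\lesssim\;2^n\cdot(CM_0/\sqrt t)^D\cdot(Cm)^{n/2}(2t)^{n/(4m)}+\PP(\max_i\|\tilde Z_i\|_2>M_0).
\]

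The main technical task is to verify that this estimate is at most $t^{n/(16m)}$ for $t\leq 1/n^2$. Collecting powers of $t$ gives $t^{n/(4m)-D/2}$, which under $n\geq C_0^m D$ is at least $t^{n/(16m)}$, leaving the slack $t^{3n/(16m)}\leq n^{-3n/(8m)}$ to absorb the prefactor $2^n(CM_0)^D(Cm)^{n/2}$. Taking logarithms this reduces to elementary inequalities of the form $m\log(Cm)\lesssim\log n$ and $Dm\log\log n\lesssim n\log n/m$, both of which follow from the running hypotheses $m\leq\log n/\log d$ and $d\geq d_0$ for a sufficiently large universal $d_0$. This bookkeeping is the principal obstacle of the proof; by comparison, the tail bound on $\max_i\|\tilde Z_i\|_2$ is a routine application of Proposition \ref{prop:poly_concentration_logconcave} combined with a union bound over the $n$ samples.
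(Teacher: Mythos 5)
Your argument for the second tail bound, $\PP(\lambda_{\min}(C_n)\le t)\le t^{n/16m}$, is correct in spirit but much more labour-intensive than the paper's. The paper avoids the ``bad set'' decomposition entirely by observing that the summands $V_i=(\tilde Z_i\cdot\theta)^2$ are nonnegative, so $\tfrac1n S_n\le 2t$ forces $V_i\le S_n\le 2nt\le 2\sqrt t$ for \emph{every} $i$ (using $t\le 1/n^2$), and independence then gives $\PP(V\le 2\sqrt t)^n$ in one line. This replaces your union over $2^n$ subsets, your high-probability control of $\max_i\|\tilde Z_i\|$, and most of the bookkeeping you flag as the principal obstacle. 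Both routes ultimately rest on the Carbery--Wright small-ball bound and a net argument, so the conceptual content is the same, but the paper's version is substantially cleaner.

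The first bound is where your proposal has a genuine gap. You take a threshold $s_0=e^{-c_0m}$ and claim that for $c_0$ a sufficiently large universal constant, the Carbery--Wright bound yields $\PP(|\tilde Z\cdot\theta|^2\le s_0)\le 1/4$, making the indicators $\ind\{V_i\ge s_0\}$ Bernoulli with mean $\ge 3/4$. But plugging $s=s_0$ into the small-ball estimate gives $\PP(V\le s_0)\le Cm\,s_0^{1/(2m)}=Cm\,e^{-c_0/2}$, which exceeds $1/4$ as soon as $m>e^{c_0/2}/(4C)$. For a \emph{universal} $c_0$ this fails for large $m$: in the relevant range $m$ can grow like $\log n/\log d$. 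To force the small-ball probability below $1/4$ you would need $s_0\le(4Cm)^{-2m}$, i.e.\ $s_0=e^{-cm\log m}$, which changes the statement of the lemma and in turn the exponent in Lemma \ref{lem724}. The paper circumvents this exactly: rather than demanding the Bernoulli mean be bounded away from zero, it uses Paley--Zygmund (via the $L^4$ bound from Proposition \ref{prop:poly_concentration_logconcave}) to get the much weaker lower bound $\PP(V\ge\tfrac12)\ge e^{-\tilde cm}$, and then a Laplace/Chernoff argument on $\EE e^{-mS_n}$ amplifies this to $\exp(-n/e^{c_1m})$ --- which is precisely the (correspondingly weaker) rate claimed in the lemma, not the $e^{-cn}$ your argument would have produced. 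Your approach could be repaired by replacing the threshold-plus-Hoeffding step with a Chernoff bound for Bernoullis of mean $e^{-\tilde cm}$, but as written the step ``$c_0$ large enough that the small-ball bound is at most $1/4$'' is false.
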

\begin{proof}
    We start by proving the first statement. Notice that
    \[
    \lambda_{\min}(C_n) = \inf_{\theta \in S^{D-2}}\frac{1}{n}\sum_{i=1}^n (\tilde{Z}_i\cdot \theta)^2.
    \]
    Now fix some unit vector $\theta$, and write $V=(\tilde{Z}\cdot\theta)^2$. Then $V$ is a non-negative random variable with $\EE V=1$ and $\EE V^2 \leq e^{cm}$ for some constant $c$, by Proposition \ref{prop:poly_concentration_logconcave}. The Paley--Zygmund inequality implies that
    \[
    \PP(V<1/2) \leq 1 - e^{-\Tilde{c}m}
    \]
    for some constant $\Tilde{c}>0$. This in turn implies that
    \begin{equation}\label{eq:negative_laplace}
        \EE e^{-mV} \leq 1-e^{-cm}
    \end{equation}
    for some constant $c>0$. We now make use of the Laplace method. Fix $\theta\in \mathbb{S}^{D-1}$ and write
    \[
    S_n = \sum_{i=1}^n (\tilde Z_i\cdot\theta)^2=\sum_{i=1}^n V_i.
    \]
    Let $c_0 = c+\log 2$, and let $t_1 = e^{-c_0m}$. By Markov's inequality,
    \begin{align*}
        \PP\Bigl(\tfrac{1}{n}S_n<t_1\Bigr)
        &= \PP\bigl(e^{-mS_n}>e^{-nmt_1}\bigr) \\
        &\leq \Bigl(\EE\bigl(e^{-mV}\bigr)e^{mt_1}\Bigr)^n\\
        &\leq \Bigl((1-e^{-cm})(1+2e^{-c_0m})\Bigr)^n\\
        &\leq (1-e^{-2cm})^n\\
        &\leq \exp\!\left(-\frac{n}{e^{2cm}}\right),
    \end{align*}
    where we used that $mt_1 \leq me^{-m\log 2}\leq 1$ and that $e^{x}\leq 1+2x$ for $x\leq 1$ and that $2e^{-c_0m}\leq e^{-cm}$.

    Now taking a union bound over a $t_1/2$-net $\cN$ of the sphere $\mathbb{S}^{D-2}$ of cardinality
    \[
    |\cN| \leq \Bigl(1+\frac{4}{t_1}\Bigr)^D
    \]
    concludes the proof of the first statement, since $n\geq C_0^mD$ for a sufficiently large chosen $C_0$.

We move to the second statement. Again, we fix some vector on the sphere and work with the same notations as before. For any $t\leq \frac{1}{n^2}$,
\begin{align*}
    \PP\Bigl(\tfrac{1}{n}S_n \leq 2t\Bigr)
    &\leq \PP(S_n\leq 2\sqrt{t})^n\\
    &\leq \PP(V\leq 2\sqrt{t})^n\\
    &= \PP(|Z\cdot\theta|\leq t^{1/4})^n\\
    &\leq C m\,t^{n/4m}\\
    &\leq t^{n/8m},
\end{align*}
where we used Carbery--Wright (Theorem \ref{thm:carbery}) on line 4 and assumed a large enough choice of $C_0$. Taking a union bound over a $t$-net of the sphere, of cardinality less than $(1+2/t)^D$, concludes the proof, again for $C_0$ large enough.

We are now in a position to prove Lemma \ref{lem688}. We use the simple observations that, for any positive matrix $M$,
\[
\norm{M^{-1}-I_d}_{op} \leq \frac{1}{\lambda_{\min}(M)}\norm{M-I_d}_{op},
\]
and that
\[
\norm{M^{-1}-I_d}_{op}\leq \max\!\left(1,\frac{1}{\lambda_{\min}(M)}\right).
\]
We abbreviate $\lambda_{\min} = \lambda_{\min}(C_n)$. Recall that
\[
m \leq \frac{\log n}{\log d}.
\]
Thus, given $c_0$ and $c_1$ the constants from Lemma \ref{lem714}, if $d$ is large enough we have
\[
e^{-c_0m}\geq \frac{1}{n^2}, \qquad e^{c_1m}\leq \sqrt{n}.
\]
We partition the probability space into three events:
\[
\cA = \{\lambda_{\min} \geq e^{-c_0m}\},
\]
\[
\cB = \Bigl\{\tfrac{1}{n^2}\leq \lambda_{\min} \leq e^{-c_0m}\Bigr\} ,
\]
\[
\cC = \{\lambda_{\min} \leq \tfrac{1}{n^2}\}.
\]
Using the previous observations and Lemma \ref{lem714},
\begin{align*}
    \EE\norm{C_n^{-1}-I_d}_{op}^p
    &= \EE\!\left[\norm{C_n^{-1}-I_d}_{op}^p\one_{\cA}\right]
     + \EE\!\left[\norm{C_n^{-1}-I_d}_{op}^p\one_{\cB}\right]
     + \EE\!\left[\norm{C_n^{-1}-I_d}_{op}^p\one_{\cC}\right]\\
    & \leq e^{pc_0m}\EE\!\left[\norm{C_n-I_d}_{op}^p\one_{\cA}\right]
        + \EE\!\left[\lambda_{\min}^{-p}\one_{\cB}\right]
        + \EE\!\left[\lambda_{\min}^{-p}\one_{\cC}\right] \\
    &\leq  e^{pc_0m}\EE\norm{C_n-I_d}_{op}^p
        + n^{2p}\exp\!\Bigl(-\tfrac{n}{e^{c_1m}}\Bigr)
        + \int_{n^{2p}}^{+\infty}\PP\!\left(\tfrac{1}{\lambda_{\min}^p} \geq u\right)du\\
    &\leq  e^{pc_0m}\EE\norm{C_n-I_d}_{op}^p
        + n^{2p}e^{-\sqrt{n}}
        + \int_{0}^{1/n^2}\PP\!\left( \lambda_{\min}\leq t\right)\frac{p}{t^{p+1}}dt \\
    &\leq  e^{pc_0m}\EE\norm{C_n-I_d}_{op}^p
        + O(e^{-n^{1/4}})
        + p\int_{0}^{1/n^2}t^{n/16m-p-1}\,dt \\
    &\leq  e^{pc_0m}\EE\norm{C_n-I_d}_{op}^p + O(e^{-n^{1/4}}).
\end{align*}
We conclude that
\[
    \left(\EE\norm{C_n^{-1}-I_d}_{op}^p\right)^{2/p} \leq \frac{(C\log n)^{2m} D\log D}{n}
\]
for some constant $C>0$, which is what we wanted.
\end{proof}

\subsubsection{Proof of Theorem \ref{thm_ls}}
We explain how to deduce Theorem \ref{thm_ls} from Theorem \ref{thm:avg_ls}. As in \ref{subsub_proj}, we set
\[
m_0 = \left\lfloor \frac{\log n}{\log d}\right\rfloor,
\]
and for any integer $1\leq p\leq m_0$,
\[
m_p = m_0-p.
\]
We again have
\[
    \frac{\binom{d+m_p}{m_p}}{n} \leq \left(\frac{m_0}{d}\right)^p.
\]
Plugging this into Theorem \ref{thm:avg_ls}, we obtain, for the choice $m=m_p$,
\begin{align*}
        \EE\normmu{f-f_{LS}}^2
        &\leq \Psi^2_\mu(m_p)  + \frac{(C\log n)^{2m_p} D\log D}{n} + \frac{8\sigma^2D}{n} \\
        &\leq \Psi^2_\mu(m_p) + (C\log n)^{2m_0+1-p}\left(\frac{m_0}{d}\right)^p + 8d\left(\frac{m_0}{d}\right)^{p}\\
        &\leq \Psi^2_\mu(m_p) +(C\log n)^{2m_0+1}\left(\frac{1}{d\log d}\right)^p + 8d\left(\frac{m_0}{d}\right)^{p}.
\end{align*}

\paragraph{First regime.}
We set $p=4$ and assume that
\[
n\leq \exp\!\left(\frac{c\log^2 d}{\log\log d}\right)
\]
for some constant $c<1$ to be determined. As a consequence, we have
\[
    \log n \cdot \log\log n \leq 2c\log^2 d,
\]
where we assume $d\geq 16$. We upper bound
\begin{align*}
(C \log n)^{2m+1}
&\leq (C \log n)^{\tfrac{4c \log d}{\log \log n} + 1} \\[6pt]
&\leq \exp\!\left( \left( \frac{4c \log d}{\log \log n} + 1 \right) \cdot \log(C \log n) \right) \\[6pt]
&\leq \exp\!\left( \left( \frac{4c \log d}{\log \log n} + 1 \right)(\log \log n + C') \right) \\[6pt]
&\leq \exp\!\left( 4c \log d + \log \log n + \frac{4c \log d}{\log \log n}\, C' + C' \right) \\[6pt]
&\leq \exp\!\left( 4c(1+C')\log d +2\log\log d +C'\right)  \\[6pt]
&\leq \exp\!\left( C' + 2\log d \right) \\[6pt]
&\lesssim d^2,
\end{align*}
where we chose
\[
c = \frac{1}{4(1+C')}.
\]
On the other hand, clearly
\[
8d\left(\frac{m_0}{d}\right)^{p} \leq 8d\left(\frac{\log d}{d}\right)^4 = o(1/d).
\]

\paragraph{Second regime.}

We want to ensure that
\[
    (C\log n)^{2m_0+1} \leq (d\log d)^{p-1}.
\]
We assume that
\[
\alpha := \frac{\log(C\log n)}{\log d} < 1/2
\]
It is enough that
\[
p\geq 1 + \frac{2m_0\log(C\log n)}{\log d} + \frac{\log(C\log n)}{\log d}.
\]
Thus, using that $\alpha<1$, it is enough that
\[
p\geq 2 + \frac{2m_0\log(C\log n)}{\log d}.
\]
As announced, we choose
\begin{align*}
    p &= 4 + \left\lfloor\frac{2m_0\log(C\log n)}{\log d}\right\rfloor \\
      &= 4 + \lfloor 2\alpha m_0 \rfloor.
\end{align*}
For that choice of $p$, since
\[
m_0=\left\lfloor\frac{\log n}{\log d}\right\rfloor \lesssim d^{1/2},
\]
we again have
\[
\sigma^2\frac{D}{n} \leq d\left(\frac{m_0}{d}\right)^4 = O(1/d).
\]

\section{The metric entropy of Lipschitz functions}\label{sec_entropy}
In the previous sections, we used low-degree multivariate polynomials to approximate and learn Lipschitz functions in high dimensions. In the Gaussian setting, when $\mu=\gamma$, for a given $\varepsilon>0$ we use that any $1$-Lipschitz function $f$ can be approximated with error at most $\varepsilon$ by a polynomial of degree at most $m$, where $m\simeq \frac{1}{\varepsilon}$. Heuristically, this approach makes sense if, at scale $\varepsilon$, the “size” of the space of polynomials of degree at most $m$ is not much larger than that of the space of Lipschitz functions. One standard way to measure size is through metric entropy. For a metric space $\left(\cX,d\right)$ we define its metric entropy as
\begin{equation}
    H_{(\cX,d)}(\varepsilon) = \log \cN_{(\cX,d)}(\varepsilon),
\end{equation}
for all $\varepsilon>0$, where $\cN_{(\cX,d)}(\varepsilon)$ is the largest cardinality of an $\varepsilon$-separated set in $(\cX,d)$. We adopt the (slightly unusual) convention of using packing numbers instead of covering numbers for our definition of metric entropy, as it will be more convenient for us.

\medskip
We provide estimates for the metric entropy of Lipschitz functions equipped with the distance
\[
d(f,g) = \norm{f-g}_{L^2(\mu)},
\]
where $\mu$ is an isotropic product log-concave measure on $\RR^d$. We denote by
\[
H_L^{\mu}(\varepsilon) = H_{(B^\mu_{Lip}, d)}(\varepsilon),
\]
where $B_{Lip}^\mu$ is the unit ball of $1$-Lipschitz functions, i.e., those $f$ such that
\[
\int f^2 \,d\mu \leq 1.
\]

\begin{theorem}\label{thm_644}
    Let $\mu$ be an isotropic product log-concave measure on $\RR^d$.
    Let $\varepsilon > 0$ satisfy
    \[
        d^{-1/4} < \varepsilon < 1.
    \]
    Then
    \[
    d^{ c/\varepsilon^2} \lesssim H_L^{\mu}(\varepsilon),
    \]
    where $c > 0$ is a universal constant.
\end{theorem}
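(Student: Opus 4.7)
The target is an $\varepsilon$-separated family inside $B_{Lip}^{\mu}$ of cardinality at least $\exp(d^{c/\varepsilon^{2}})$. The natural combinatorial object, suggested by the tensorization discussion at the end of Section~\ref{sec_logconcave}, is the collection of $k$-subsets of $[d]$: for $\mu$ an isotropic product measure, products of a fixed $1$-Lipschitz centered one-dimensional function over $k$ coordinates form an orthogonal system in $L^{2}(\mu)$ of cardinality $\binom{d}{k}$, which grows like $d^{k}$ as soon as $k\ll d$. Taking $k\sim 1/\varepsilon^{2}$ and combining these blocks with random signs via a Gilbert--Varshamov code then turns this combinatorial explosion into the desired doubly exponential packing.

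\textbf{Step 1 (building blocks).} Fix a small universal constant $c>0$ and set $k=\lfloor c/\varepsilon^{2}\rfloor$. The assumption $\varepsilon>d^{-1/4}$ forces $k\leq c\sqrt{d}$, so in particular $k\leq d/2$ and
\[
\binom{d}{k}\,\geq\,(d/k)^{k}\,\gtrsim\, d^{c/(2\varepsilon^{2})}.
\]
Let $\mu_{1}$ be the common one-dimensional marginal of $\mu$, which is an absolutely continuous, non-degenerate, isotropic log-concave probability on $\RR$. Pick a bounded $1$-Lipschitz function $\varphi:\RR\to\RR$ with $|\varphi|\leq 1$, $|\varphi'|\leq 1$ and $\int\varphi\,d\mu_{1}=0$ (for instance a suitably rescaled and additively centered version of $\tanh$), and set $\beta=\int\varphi^{2}d\mu_{1}\in(0,1)$. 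For each $k$-subset $S\subset[d]$ define
\[
\psi_{S}(x)=k^{-1/2}\prod_{i\in S}\varphi(x_{i}).
\]
A direct computation gives $|\nabla\psi_{S}(x)|^{2}\leq k^{-1}\sum_{i\in S}|\varphi'(x_{i})|^{2}\prod_{j\in S\setminus\{i\}}\varphi(x_{j})^{2}\leq 1$, so each $\psi_{S}$ is $1$-Lipschitz. Independence of the coordinates together with $\int\varphi\,d\mu_{1}=0$ shows that $\{\psi_{S}\}_{|S|=k}$ is an orthogonal family in $L^{2}(\mu)$ with $\|\psi_{S}\|_{L^{2}(\mu)}^{2}=\beta^{k}/k$.

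\textbf{Step 2 (sign combinations and coding).} For each sign pattern $\sigma\in\{\pm 1\}^{\binom{[d]}{k}}$ set $F_{\sigma}=\lambda\sum_{|S|=k}\sigma_{S}\psi_{S}$, where $\lambda>0$ is chosen so that $F_{\sigma}$ is $1$-Lipschitz and sits in $B_{Lip}^{\mu}$. By orthogonality,
\[
\|F_{\sigma}-F_{\sigma'}\|_{L^{2}(\mu)}^{2}=\frac{4\lambda^{2}\beta^{k}}{k}\,k_{H}(\sigma,\sigma'),
\]
where $k_{H}$ denotes Hamming distance. Gilbert--Varshamov yields a code $\mathcal{C}\subset\{\pm 1\}^{\binom{[d]}{k}}$ of cardinality at least $2^{c'\binom{d}{k}}$ with pairwise Hamming distance $\geq\binom{d}{k}/4$. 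For $\sigma,\sigma'\in\mathcal{C}$ the $L^{2}(\mu)$ distance is then at least $\lambda(\beta^{k}\binom{d}{k}/k)^{1/2}$; tuning $\lambda$ so that this quantity equals a fixed multiple of $\varepsilon$ (which also keeps $\|F_{\sigma}\|_{L^{2}(\mu)}\leq 1$, since the same scalar governs both norms) yields an $\varepsilon$-separated family of cardinality $|\mathcal{C}|$, whence
\[
H_{L}^{\mu}(\varepsilon)\,\geq\, c'\binom{d}{k}\log 2\,\gtrsim\, d^{c/\varepsilon^{2}}.
\]

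\textbf{Main obstacle.} The delicate point is making $F_{\sigma}$ \emph{pointwise} $1$-Lipschitz at a scale $\lambda$ that is still large enough to preserve the separation of Step~2. The expected squared gradient $\EE|\nabla F_{\sigma}|^{2}$ is of order $\lambda^{2}\binom{d}{k}\beta^{k-1}$, but the pointwise supremum of $|\nabla F_{\sigma}|$ can be significantly larger; a crude worst-case bound would only permit $k=1$ and therefore fall short of the target exponent. I plan to resolve this in two moves: first, use the moment bounds for polynomials in log-concave variables (Proposition~\ref{prop:poly_concentration_logconcave}) together with the Carbery--Wright anti-concentration estimate (Theorem~\ref{thm:carbery}) to control $|\nabla F_{\sigma}|$ outside an event of small $\mu$-measure; second, apply a small semigroup mollification $T_{t}$ (Lemma~\ref{lem1162}) to enforce a uniform pointwise Lipschitz bound, absorbing the resulting $L^{2}$ contraction by a mild readjustment of $k$, $\lambda$ and of the Hamming-separation threshold. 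Log-concavity of $\mu$ is used essentially through the polynomial concentration estimates, while the product structure both yields the orthogonality in Step~1 and permits working with the one-dimensional semigroup coordinate by coordinate. Calibrating these parameters so that the final functions are genuine $1$-Lipschitz elements of $B_{Lip}^{\mu}$ while retaining the Hamming separation extracted from the code is the main technical hurdle.
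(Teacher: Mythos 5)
Your overall skeleton is the same as the paper's: build an orthogonal system indexed by $k$-subsets of $[d]$ with $k\simeq 1/\varepsilon^2$, combine with random or coded coefficients to obtain a separated family of exponential size, and then regularize via truncation and the Langevin semigroup. The computation in Step~1 and the reduction of separation to Hamming distance via orthogonality are both fine. But the paragraph you label ``Main obstacle'' \emph{is} the proof: the rest is bookkeeping. And as written, your proposed resolution does not close the gap.

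Two concrete problems. First, Proposition~\ref{prop:poly_concentration_logconcave} and Theorem~\ref{thm:carbery} are statements about \emph{polynomials}, but your $F_\sigma$ is built from a bounded nonlinear $\varphi$, so neither result applies directly to $F_\sigma$ or $\nabla F_\sigma$. The paper deliberately uses the linear monomials $X_\alpha=\prod_{i\in\alpha}X_i$ (i.e.\ $\varphi(x)=x$) precisely so that $P_\theta$ is a genuine degree-$m$ polynomial and all the polynomial machinery is available. Second, even granting bounded fourth moments, you need three additional quantitative estimates you never state: a bound $\EE_X P^4\lesssim 1$ for the functions you actually keep (in the paper this is Lemma~\ref{lem_780}, which averages over Gaussian $\theta$ — it is not automatic for fixed Gilbert--Varshamov codewords, and you would need to argue that a constant fraction of your code satisfies it, e.g.\ by Markov plus a union bound); the identity $\int|\nabla P_\theta|^2\,d\mu = m\|\theta\|_2^2$ so that the semigroup time scale $t\simeq 1/m$ is short enough; and Lemma~\ref{lem309} ($\|T_tf\|_2^2\ge\|f\|_2^2-2t\int|\nabla f|^2$) to certify that mollification does not destroy the $\varepsilon$-separation. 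Without the last two, your remark about ``absorbing the $L^2$ contraction by a mild readjustment'' is unsubstantiated: a priori $T_t$ could shrink $\|F_\sigma\|_2$ by an arbitrary factor. Note also that bounding $|\varphi|\le1$ does not let you skip truncation, since $\|F_\sigma\|_\infty$ can still be of order $\lambda k^{-1/2}\binom{d}{k}$, far too large for Lemma~\ref{lem1162} at $t\simeq 1/k$. The paper's resolution — Gaussian coefficients, fourth-moment averaging, truncation at a universal level $\lambda_0$, and the gradient-energy control of the semigroup decay — is exactly the work you have deferred.
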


In the case where $\mu = \gamma$, the standard Gaussian measure on $\RR^d$, we get a two-sided estimate:
\begin{corollary}\label{cor_1535}
    There exists a constant $c>0$ such that, for any $\varepsilon$ with
    \[
        d^{-1/4} < \varepsilon < 1,
    \]
    we have
    \begin{equation}\label{eq:639}
        \binom{d}{\lfloor c/\varepsilon \rfloor^2} \ \lesssim \ H_L^\gamma(\varepsilon)\ \lesssim \ \binom{d}{\lceil 4/\varepsilon \rceil^2},
    \end{equation}
    where $c > 0$ is a universal constant.
\end{corollary}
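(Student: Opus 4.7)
The lower bound follows immediately by specializing Theorem~\ref{thm_entropyLip} to $\mu = \gamma$: the standard Gaussian is an isotropic log-concave product measure, and the range $d^{-1/4} < \varepsilon < 1$ is precisely the product-measure regime of that theorem.

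For the upper bound, the plan is to combine Hermite truncation with an ellipsoid entropy estimate via Sudakov's minoration. Fix a $1$-Lipschitz $f$ with $\norm{f}_{L^2(\gamma)} \leq 1$ and expand $f = \sum_\alpha f_\alpha H_\alpha$. The eigenvalue identity of Proposition~\ref{prop397} combined with the integration-by-parts formula \eqref{eq597} gives
\[
\sum_\alpha f_\alpha^2 \leq 1, \qquad \sum_\alpha |\alpha| f_\alpha^2 = \int |\nabla f|^2 \, d\gamma \leq 1.
\]
Setting $L := \lceil 4/\varepsilon \rceil^2$, the second constraint yields $\sum_{|\alpha| > L} f_\alpha^2 \leq 1/L \leq \varepsilon^2/16$, so the Hermite truncation $P_L f := \sum_{|\alpha| \leq L} f_\alpha H_\alpha$ is within $\varepsilon/4$ of $f$ in $L^2(\gamma)$. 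Hence an $\varepsilon/2$-packing of the truncated coefficient vectors $(f_\alpha)_{|\alpha| \leq L}$ in $\ell^2$ induces an $\varepsilon$-packing of $B_{Lip}$ in $L^2(\gamma)$.

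These coefficient vectors lie in the convex body
\[
E_L = \{x \in \RR^{D_L} : |x_0| \leq 1, \; \textstyle\sum_{1 \leq |\alpha| \leq L} |\alpha| x_\alpha^2 \leq 1 \},
\]
a Cartesian product of an interval and an ellipsoid with semi-axes $a_\alpha = 1/\sqrt{|\alpha|}$. By Sudakov's minoration applied to the Gaussian process $x \mapsto \langle g, x \rangle$ on $\RR^{D_L}$,
\[
\log \cN(E_L, \varepsilon/2, \ell^2) \lesssim w(E_L)^2/\varepsilon^2,
\]
where $w(E_L) = \EE \sup_{x \in E_L} \langle g, x \rangle$. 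By Jensen,
\[
w(E_L)^2 \lesssim 1 + \sum_{\ell=1}^L N_\ell/\ell, \qquad N_\ell := \binom{d+\ell-1}{\ell}.
\]
Since $\varepsilon > d^{-1/4}$ forces $L \lesssim \sqrt{d}$, the ratio $(N_{\ell+1}/(\ell+1))/(N_\ell/\ell) \simeq d/\ell$ stays bounded below by a large constant for every $\ell \leq L$; the sum is therefore dominated by its last term, $N_L/L$. Combining with the elementary bound $\binom{d+L-1}{L} \leq \exp(O(L^2/d))\binom{d}{L} \lesssim \binom{d}{L}$, one gets $w(E_L)^2 \lesssim \binom{d}{L}/L$. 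Using the crucial cancellation $L\varepsilon^2 \simeq 1$, this yields $(w(E_L)/\varepsilon)^2 \lesssim \binom{d}{L}$, and since $L = \lceil 4/\varepsilon \rceil^2$, the upper bound follows.

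The main obstacle is producing a bound with no extraneous logarithmic factor in $1/\varepsilon$. A cruder approach, such as a coordinate discretization of the polynomial ball of ambient dimension $D_L = \binom{d+L}{L}$, would give roughly $D_L \log(1/\varepsilon)$ and be too weak. The Sudakov route succeeds precisely because it only sees the effective ellipsoid width $w(E_L)$, which is governed by the trace $\sum a_\alpha^2$ and in turn concentrates on the highest-degree scale $\ell \simeq L$; the final product $w(E_L)^2/\varepsilon^2$ then absorbs exactly the factor $L$ needed to match the lower bound $\binom{d}{c^2/\varepsilon^2}$ up to the value of the universal constant.
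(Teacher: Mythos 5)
The lower bound is handled exactly as the paper does: specialize Theorem~\ref{thm_entropyLip} (equivalently Theorem~\ref{thm_644}) to $\mu=\gamma$.

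For the upper bound, your Sudakov/Gaussian-width route is valid and does deliver the stated bound. The Hermite truncation is the same as in the paper; you then place the truncated coefficient vectors in the product $E_L$ of a segment with the ellipsoid $\{\sum_{1\le|\alpha|\le L}|\alpha|\,x_\alpha^2\le1\}$, compute $w(E_L)^2\lesssim 1+\sum_{\ell\le L}N_\ell/\ell$ by Jensen, and observe that in the regime $L\lesssim\sqrt d$ the terms $N_\ell/\ell$ grow by a factor $\simeq d/\ell\ge 2$, so the sum is dominated by $N_L/L$. Combined with $\binom{d+L-1}{L}\lesssim\binom{d}{L}$ and $L\varepsilon^2\gtrsim1$, Sudakov's minoration gives $\log\cN(E_L,\varepsilon/2)\lesssim w(E_L)^2/\varepsilon^2\lesssim\binom{d}{L}$. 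This all checks out.

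However, your motivating remark that ``a coordinate discretization of the polynomial ball\ldots would give roughly $D_L\log(1/\varepsilon)$ and be too weak'' is not quite right, and in fact the paper's proof is precisely such a volumetric count. The point you are missing is that one is free to truncate at a degree \emph{strictly smaller} than the exponent appearing in the target bound. The paper truncates at $m=\lceil 3/\varepsilon\rceil^2$, applies the standard packing bound $N\le(7/\varepsilon)^{D}$ with $D=\binom{d+m}{m}$, and then absorbs the logarithmic factor: since increasing the degree by $\Theta(1/\varepsilon^2)$ multiplies the binomial by a power of $d/m\gtrsim\sqrt d$, one has $D\log(7/\varepsilon)\le\binom{d+m_2}{m_2}\lesssim\binom{d}{m_2}$ with $m_2=\lceil 4/\varepsilon\rceil^2$, for all $d$ large. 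So the ``crude'' approach succeeds without any Gaussian-process machinery. Your Sudakov argument is a legitimate alternative---and arguably more canonical, as it captures the ellipsoid geometry of the Hermite coefficient body directly and allows one to truncate at the same degree $L$ used in the final bound---but it is not needed to avoid the logarithm.
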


\begin{remark}
    Corollary \ref{cor_1535} extends immediately to products of isotropic log-concave measures that are Lipschitz images of the Gaussian. Indeed, if $\mu = T\#\gamma$ for some $K$-Lipschitz map $T$, then
    \[
    d_\mu(f,g) = d_\gamma(f\circ T, g\circ T)
    \]
    and thus
    \[
    H_L^\mu(\varepsilon) \leq H_L^\gamma(\varepsilon/K).
    \]
    This includes, for example, the uniform measure on the hypercube, or products of strongly log-concave densities.
\end{remark}
From Theorem \ref{thm_644}, we will deduce a slightly weaker lower bound for the general case.
\begin{theorem}\label{thm1863}
    Let $\mu$ be an isotropic log-concave probability measure on $\RR^d$. Let $\varepsilon > 0$ satisfy
    \[
        d^{-\eta} < \varepsilon < 1.
    \]
    Then
    \[
    d^{ c/\varepsilon^2} \lesssim H_L^{\mu}(\varepsilon),
    \]
    where $\eta<1/4$ and $c > 0$ are universal constants.
\end{theorem}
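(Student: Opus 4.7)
The plan is to deduce Theorem~\ref{thm1863} from the product case (Theorem~\ref{thm_644}) via a random projection, using the central limit theorem for isotropic log-concave measures to make the projected measure nearly Gaussian, and then pulling back the Gaussian construction. By a quantitative CLT of Klartag (and its recent refinements), there exist universal constants $\alpha_0 \in (0,1)$ and $c_0 > 0$ such that, for every isotropic log-concave $\mu$ on $\RR^d$, one can find a subspace $V \subseteq \RR^d$ of dimension $k := \lfloor d^{\alpha_0}\rfloor$ whose push-forward satisfies $\mathrm{TV}(P_V\mu, \gamma_k) \leq d^{-c_0}$. Any positive universal value of $\alpha_0$ will suffice for the qualitative conclusion, so the exact state of the art is immaterial.

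On $V \cong \RR^k$ the Gaussian measure $\gamma_k$ is isotropic product log-concave, so Theorem~\ref{thm_644} (specialized as in Corollary~\ref{cor_1535}) produces, for every $\varepsilon > k^{-1/4}$, a family $\{g_1,\ldots,g_N\}$ of $1$-Lipschitz functions on $V$ with $\|g_i\|_{L^2(\gamma_k)} \leq 1$ and pairwise $L^2(\gamma_k)$-distances at least $\varepsilon$, of cardinality $\log N \gtrsim \binom{k}{\lfloor c_1/\varepsilon\rfloor^2}$. I then set $f_i(x) := g_i(P_Vx)$; since $P_V$ is a contraction, each $f_i$ is $1$-Lipschitz, and $\|f_i\|_{L^2(\mu)} = \|g_i\|_{L^2(P_V\mu)}$ while $\|f_i-f_j\|_{L^2(\mu)} = \|g_i-g_j\|_{L^2(P_V\mu)}$. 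It remains to transfer the separation from $L^2(\gamma_k)$ to $L^2(P_V\mu)$.

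To do so I truncate: set $R := C\sqrt{\log k}$ and $\tilde g_i := \max(-R,\min(g_i,R))$. By Gaussian concentration for Lipschitz functions (Proposition~\ref{prop:lipschitz_concentration}), $\|g_i - \tilde g_i\|_{L^2(\gamma_k)}$ is super-polynomially small in $k$. Since $(\tilde g_i - \tilde g_j)^2 \leq 4R^2 = O(\log d)$ is bounded, the TV-estimate yields
\[
    \Bigl|\, \|\tilde g_i - \tilde g_j\|_{L^2(P_V\mu)}^2 - \|\tilde g_i - \tilde g_j\|_{L^2(\gamma_k)}^2 \,\Bigr| \;\leq\; 4R^2 \cdot \mathrm{TV}(P_V\mu,\gamma_k) \;\leq\; 4R^2 d^{-c_0} \;=\; o(1),
\]
with a similar estimate for the individual norms $\|\tilde g_i\|_{L^2(P_V\mu)}^2$. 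Consequently, after rescaling by a factor bounded by a universal constant, the pulled-back truncated functions $\tilde f_i := \tilde g_i \circ P_V$ lie in $B_{Lip}$ for $\mu$ and remain pairwise $(\varepsilon/2)$-separated in $L^2(\mu)$.

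To recover a bound of the form $\binom{d}{\lfloor c_2/\varepsilon\rfloor^2}$, one finally compares binomial coefficients: for any $c < \alpha_0/(2-\alpha_0)$ and any $m \leq d^{\alpha_0/2}$, a direct computation gives $\binom{d^{\alpha_0}}{m} \gtrsim \binom{d}{\lfloor cm\rfloor}$. Applying this with $m = \lfloor c_1/\varepsilon\rfloor^2$, which satisfies $m \leq d^{\alpha_0/2}$ precisely under the hypothesis $\varepsilon > d^{-\alpha_0/4}$, yields the desired conclusion with some universal $c_2 > 0$ and $\eta := \alpha_0/4 < 1/4$. The main technical obstacle is the transfer step in the third paragraph: the Lipschitz functions from the Gaussian construction are only sub-Gaussian rather than bounded, forcing one to truncate at a polylogarithmic scale $R = O(\sqrt{\log d})$, and then to verify that this scale is comfortably dominated by the polynomial CLT rate $d^{-c_0}$, which it is.
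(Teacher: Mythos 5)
Your proposal is correct and follows the same high-level strategy as the paper: pass to a low-dimensional marginal of $\mu$ that is approximately Gaussian, apply the product-case bound (Theorem~\ref{thm_644}) on that marginal, and pull back along the orthogonal projection. The difference is in the quantitative CLT you invoke and, consequently, in the transfer step. The paper uses the Eldan--Klartag pointwise estimate (Theorem~\ref{thm_pointwise}): on a subspace $E$ of dimension $k \geq d^{\eta_0}$, the marginal density satisfies $|p_E - q_{\gamma_E}| \leq (C/k)\, q_{\gamma_E}$ on the ball of radius $k$. This multiplicative control lets one compare $\int g^2\,d\mu_E$ and $\int g^2\,d\gamma_E$ directly, by restricting to the ball (where the densities agree up to a factor $1 \pm C/k$) and controlling the Gaussian tail outside by a crude $e^{-k}$ bound; no truncation of the Lipschitz functions is needed. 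You instead invoke only a total-variation bound $\mathrm{TV}(P_V\mu, \gamma_k) \lesssim d^{-c_0}$ (a weaker and earlier form of the CLT), and compensate by truncating each $g_i$ at level $R = C\sqrt{\log k}$, using Gaussian concentration to bound the truncation error and the TV bound to transfer integrals of the bounded truncated functions. Both routes are valid; the paper's stronger hypothesis buys a shorter transfer, while yours uses a more portable hypothesis at the cost of a truncation step.

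Two small bookkeeping issues in your writeup, neither fatal. First, with $R = C\sqrt{\log k}$ the truncation error $\|g_i - \tilde g_i\|_{L^2(\gamma_k)} \lesssim e^{-cR^2/2} = k^{-cC^2/2}$ is polynomially, not super-polynomially, small in $k$; this is fine since one only needs it $\ll \varepsilon$ where $\varepsilon > k^{-1/4}$, which holds after fixing $C$ large. Second, your final choice $\eta := \alpha_0/4$ accounts only for the applicability constraint $\varepsilon > k^{-1/4}$ and the binomial comparison; you also need the TV error term $R^2\, d^{-c_0} \lesssim d^{-c_0}\log d$ to be $\ll \varepsilon^2 \geq d^{-2\eta}$, which forces $\eta < c_0/2$ as well. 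The correct universal choice is $\eta := \tfrac12\min(\alpha_0/2, c_0)$ (diminished slightly to absorb constants and logarithms), which is still a universal constant $< 1/4$, so the statement is unaffected.
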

\medskip
We begin by proving the upper bound in Corollary \ref{cor_1535}, which essentially follows from polynomial approximation. Without loss of generality, we may assume that $d$ is large enough.
Let $\varepsilon \in (0,1)$ and let $(f_1,\dots,f_{N})$ be an $\varepsilon$-separated subset of $B_{Lip}^\gamma$. Since $\Psi_{\gamma}(m) \leq 1/(m+1)$, there exist polynomials $P_1,\dots,P_{N}$ such that
\[
\normgamma{f_i-P_i} \leq \tfrac{\varepsilon}{3},
\qquad
\deg(P_i) \leq m := \Bigl\lceil \tfrac{3}{\varepsilon} \Bigr\rceil^2.
\]
Thus, by the triangle inequality, the set $(P_1,\dots,P_{N})$ is $\varepsilon/3$-separated; indeed, for $i\neq j$,
\[
\normgamma{P_i-P_j}\geq \tfrac{\varepsilon}{3}.
\]
In fact, for any $i$, the polynomial $P_i$ is the truncated Hermite expansion of the $1$-Lipschitz function $f_i$:
\[
P_i = \sum_{\abs{\alpha}\leq m} \langle f_i, H_\alpha \rangle \, H_\alpha.
\]
In particular,
\[
\normgamma{P_i} \leq \normgamma{f_i} \leq 1.
\]
Hence $P_i$ lies in the unit ball of $\cP_{d,m}$, equipped with
the norm $\normgamma{\cdot}$. As before, we set
\[
D = \binom{d+m}{m}
\]
for the dimension of that space. We thus have the standard packing bound
\[
N \leq \left(1+ \tfrac{6}{\varepsilon}\right)^D \leq \left(\tfrac{7}{\varepsilon}\right)^D.
\]
Let $m_2 := \bigl\lceil \tfrac{4}{\varepsilon} \bigr\rceil^2$ and $D_2 := \binom{d+m_2}{m_2}$. For $d$ large enough,
\[
D\log\!\Bigl(\tfrac{7}{\varepsilon}\Bigr) \leq D_2,
\]
so that
\[
N \leq e^{D_2}.
\]
Finally,
\begin{align*}
    \frac{\binom{d+m_2}{m_2}}{\binom{d}{m_2}}
    &= \frac{(d+m_2)!(d-m_2)!}{d!^2}\\
    &= \frac{(d+m_2)(d+m_2-1)\cdots(d+1)}{d(d-1)\cdots(d-m_2+1)} \\
    &\leq \left(\frac{d+m_2}{d-m_2}\right)^{m_2}
     = \left(1+\frac{2m_2}{d-m_2}\right)^{m_2}
     \leq \left(1+\frac{4}{m_2}\right)^{m_2}
     \leq e^4.
\end{align*}
This concludes the proof of the upper bound:
\[
\log N \;\leq\; D_2 \;\lesssim\; \binom{d}{\lceil 4/\varepsilon \rceil^2}.
\]
The constant $4$ is not optimal and can in fact be reduced essentially to $2$.
\subsection{Lower bound}
For the lower bound, given $\varepsilon>\tfrac{1}{d^{1/4}}$,
our strategy is to begin with a $\tfrac{1}{2}$-separated set of polynomials of degree at most $m$, with $m\simeq \tfrac{1}{\varepsilon^2}$, and from it construct an $\varepsilon$-separated set of Lipschitz functions. By convolving $\mu$ with a tiny Gaussian of variance tending to zero, it is not difficult to show that we may assume that $\mu$ has and positive density on the whole $\RR^n$. From now on we fix such an isotropic product log-concave measure $\mu$ and denote by
\[
(T_t)_{t\geq0}
\]
the associated Langevin semigroup. One possible way of transforming a polynomial $P$ into a Lipschitz function $f_P$ is to set
\begin{equation}\label{eq732}
    f_P = T_t(P|_\lambda),
\end{equation}
for some $\lambda,t>0$, where $P|_\lambda$ denotes the truncation
\[
P|_\lambda(x) = P(x)\,1_{\{|P(x)|\leq \lambda\}}.
\]
By construction $P|_\lambda$ is bounded by $\lambda$, thus by Lemma \ref{lem1162},
\[
f_P \text{ is } \tfrac{\lambda}{\sqrt{t}}\text{-Lipschitz}.
\]
We shall choose $t$ and $\lambda$ so that the $L^2$ norm of $f_P$ is not too different from that of $P$. More precisely, we would like to ensure that if $P$ and $Q$ are two polynomials of degree at most $m$ such that
\[
\normmu{P-Q}\geq \tfrac{1}{2},
\]
then
\begin{equation}\label{eq740}
    \normmu{f_P-f_Q}\geq c>0.
\end{equation}
If we can ensure \eqref{eq740} for any pair of polynomials $P,Q$ in a $\tfrac{1}{2}$-separated set of $\cP_{d,m}$, then we will have constructed a $c$-separated set of Lipschitz functions with Lipschitz constant $\tfrac{\lambda}{\sqrt{t}}$. Equivalently, a $\tfrac{c\sqrt{t}}{\lambda}$-separated set of $1$-Lipschitz functions.

Let us discuss what values of $t$ and $\lambda$ might ensure \eqref{eq740}. At this heuristic level, it is helpful to consider the case $\mu=\gamma$. In this case, the Langevin semigroup is the Ornstein–Uhlenbeck semigroup, which acts diagonally on Hermite polynomials:
\[
T_tH_\alpha = e^{-t|\alpha|}H_\alpha.
\]
Thus, by decomposing a polynomial $P$ of degree at most $m$ into the orthonormal Hermite basis,
\[
P = \sum_{|\alpha|\leq m} P_\alpha H_\alpha,
\]
we obtain
\begin{equation}\label{eq748}
\normgamma{T_tP}^2 = \sum_{|\alpha|\leq m} e^{-2t|\alpha|}P_\alpha^2 \;\geq\; e^{-2tm}\sum_{|\alpha|\leq m} P_\alpha^2 = e^{-2tm}\normgamma{P}^2.
\end{equation}
Although we will apply $T_t$ to the truncated polynomial $P|_\lambda$ rather than to $P$ itself, using the fact that $T_t$ is a contraction in $L^2(\gamma)$ we may write
\begin{equation}
    \normgamma{T_t(P|_\lambda)} \;\geq\; \normgamma{T_tP} - \normgamma{T_t(P-P|_\lambda)}
    \;\geq\; e^{-tm}\normgamma{P} - \normgamma{P-P|_\lambda}.
\end{equation}
Thus, if we choose $t$ of order $1/m$, we must choose $\lambda$ large enough so that
\[
\normgamma{P-P|_\lambda}
\]
is sufficiently small. The issue is that for an arbitrary degree-$m$ polynomial $P$ with
\[
\normgamma{P}=1,
\]
if one wants to truncate at some level $\lambda>0$ so that
\[
\normgamma{P-P|_\lambda}\leq \tfrac{1}{10},
\]
one may have to take
\[
\lambda \geq e^{cm},
\]
which is too large for our purposes. Indeed, the fourth moment of $P$ may be as large as
\[
\EE P^4(G) \geq e^{cm}
\]
for some constant $c>0$. This can already be seen in dimension one by considering the degree-$m$ monomial.

\medskip
We resolve this issue by considering random degree-$m$ polynomials, for which we show that, with positive probability, it suffices to take
\[
\lambda = \lambda_0>0,
\]
a constant independent of $m$. Moving away from the Gaussian setting, we also show that for such random polynomials the Langevin semigroup does not “kill’’ the $L^2$ norm too quickly.

\subsubsection{Random multilinear polynomials}
We restrict our attention to polynomials which are linear combinations of degree-$m$ multilinear monomials. Let
\[
D_0 = \binom{d}{m}.
\]
Write $\cS_{d,m}$ for the collection of all subsets of $\{1,\ldots,d\}$
of cardinality $m$.
We define a polynomial
\begin{equation}\label{eq775}
    P_\theta  = \sum_{\alpha \in \cS_{d,m} } \theta_\alpha \prod_{i\in \alpha} X_i
    = \sum_{\alpha \in \cS_{d,m}} \theta_\alpha X_{\alpha},
\end{equation}
for a given vector $\theta \in \RR^{D_0} \cong \RR^{\cS_{d,m}}$, where we write
\[
X_{\alpha} = \prod_{i\in \alpha}X_i.
\]
Our intuition is that for a random $\theta$, the value distribution of $P_{\theta}(X)$ should be roughly Gaussian.

\begin{lemma}\label{lem_780}
    Assume that $m^2\leq d$ and let $P_\theta$ be defined by \eqref{eq775}. Let $\theta \in \RR^{D_0}$ be a Gaussian random vector of mean zero and covariance $(1/D_0) \cdot I_{D_0}$.
    Then the expected fourth moment of $P_\theta$ is bounded by
    \begin{equation}
        \EE_\theta \left[ \EE P_\theta^4(X)  \right] \leq 3 + \frac{Cm^2}{d} \leq C_0,
    \end{equation}
    where $X\sim\mu$, $C$ is a universal constant, and $C_0 = 3+C$.
\end{lemma}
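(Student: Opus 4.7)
\textbf{Proof plan for Lemma \ref{lem_780}.} The plan is to compute the $\theta$-expectation first using Wick's (Isserlis') formula, then reduce the problem to controlling a second moment of a sum of independent-like products under the product measure $\mu$. Writing
\[
P_\theta^4(x) = \sum_{\alpha_1,\alpha_2,\alpha_3,\alpha_4 \in \cS_{d,m}} \theta_{\alpha_1}\theta_{\alpha_2}\theta_{\alpha_3}\theta_{\alpha_4}\, X_{\alpha_1}X_{\alpha_2}X_{\alpha_3}X_{\alpha_4},
\]
and using that $\theta$ is centered Gaussian with $\EE \theta_\alpha \theta_\beta = D_0^{-1}\delta_{\alpha\beta}$, Isserlis' theorem gives
\[
\EE_\theta \bigl[\theta_{\alpha_1}\theta_{\alpha_2}\theta_{\alpha_3}\theta_{\alpha_4}\bigr] = \frac{1}{D_0^2}\bigl(\delta_{\alpha_1\alpha_2}\delta_{\alpha_3\alpha_4} + \delta_{\alpha_1\alpha_3}\delta_{\alpha_2\alpha_4} + \delta_{\alpha_1\alpha_4}\delta_{\alpha_2\alpha_3}\bigr).
\]
The three pairings each contribute $\sum_{\alpha,\beta} X_\alpha^2 X_\beta^2$, so after summing and taking expectation in $X$,
\[
\EE_\theta\bigl[\EE_X P_\theta^4(X)\bigr] = \frac{3}{D_0^2}\,\EE_X\!\left[\Bigl(\sum_{\alpha \in \cS_{d,m}} X_\alpha^2\Bigr)^{\!2}\right].
\]

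Next I would expand the right-hand side and exploit the product structure of $\mu$. For two multi-indices $\alpha,\beta \in \cS_{d,m}$ with $|\alpha\cap\beta|=k$, independence of the coordinates and the normalization $\EE X_i^2 = 1$ yield
\[
\EE X_\alpha^2 X_\beta^2 = \prod_{i \in \alpha\cap\beta} \EE X_i^4 \;\cdot\; \prod_{i\in \alpha\triangle\beta} \EE X_i^2 = \prod_{i\in\alpha\cap\beta}\EE X_i^4.
\]
Since each marginal $\mu_i$ is isotropic and log-concave on $\RR$, Proposition~\ref{prop:poly_concentration_logconcave} applied to the degree-$1$ polynomial $P(x_i)=x_i$ gives $\EE X_i^4 \leq M$ for some universal $M \geq 1$. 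Hence $\EE X_\alpha^2 X_\beta^2 \leq M^{|\alpha\cap\beta|}$.

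The combinatorial heart of the argument is then to average over $\beta$. Fix $\alpha$ and draw $\beta$ uniformly from $\cS_{d,m}$; the size $K:=|\alpha\cap\beta|$ follows a hypergeometric law with $\EE K = m^2/d$, and the tail bound
\[
\PP(K \geq k) \leq \binom{m}{k}\frac{m(m-1)\cdots(m-k+1)}{d(d-1)\cdots(d-k+1)} \leq \frac{m^{2k}}{k!\,d^k}
\]
implies, using the assumption $m^2 \leq d$,
\[
\EE M^K - 1 \leq \sum_{k=1}^{m} \frac{(Mm^2/d)^k}{k!} \leq e^{Mm^2/d}-1 \leq C\,\frac{m^2}{d}
\]
with $C$ depending only on $M$. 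Summing over $\alpha$ gives
\[
\EE_X\!\left[\Bigl(\sum_\alpha X_\alpha^2\Bigr)^{\!2}\right] = \sum_{\alpha,\beta} \EE X_\alpha^2 X_\beta^2 \leq D_0^2\,\EE M^K \leq D_0^2\Bigl(1+C\frac{m^2}{d}\Bigr),
\]
and plugging this into the displayed Isserlis identity yields
\[
\EE_\theta\bigl[\EE_X P_\theta^4(X)\bigr] \leq 3 + C'\frac{m^2}{d},
\]
which is what we wanted.

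The only real obstacle is the combinatorial averaging in step three; the Gaussian calculation in step one is standard, and the bound $\EE X_i^4 \leq M$ is immediate from the cited log-concave moment estimates. Everything else is routine.
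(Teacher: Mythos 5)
Your proposal is correct and follows essentially the same route as the paper: expand the $\theta$-expectation via Wick's formula to reduce to $\frac{3}{D_0^2}\sum_{\alpha,\beta}\EE X_\alpha^2 X_\beta^2$, use the product structure and the log-concave fourth-moment bound to control each term by $M^{|\alpha\cap\beta|}$, and then estimate the hypergeometric average $\EE M^K$. The only (inessential) difference is in that last step: the paper bounds $\EE M^K$ by stochastic domination of $\mathrm{Hypergeometric}(d,m,m)$ by $\mathrm{Binomial}(m,m/d)$, computing $\EE M^Z = (1+(M-1)m/d)^m$ exactly, whereas you sum the tail bound $\PP(K\geq k)\leq m^{2k}/(k!\,d^k)$; both give the same $O(m^2/d)$ error under the assumption $m^2\leq d$.
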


\begin{proof}
We expand
\begin{align*}
\EE_\theta \left[ \EE\, P_\theta^4(X) \right]
&= \EE \sum_{\alpha_1,\alpha_2,\alpha_3,\alpha_4}
   \theta_{\alpha_1}\theta_{\alpha_2}\theta_{\alpha_3}\theta_{\alpha_4}
   X_{\alpha_1}X_{\alpha_2}X_{\alpha_3}X_{\alpha_4} \\
&= \sum_{\alpha} \EE[\theta_\alpha^4]\,\EE[X_{\alpha}^4]
   + 3\sum_{\alpha\neq\beta} \EE[\theta_\alpha^2]\,\EE[\theta_\beta^2]\,
     \EE[X_{\alpha}^2X_{\beta}^2] \\
&= \frac{3}{D_0^2}\sum_\alpha \EE[X_{\alpha}^4]
   + \frac{3}{D_0^2}\sum_{\alpha\neq\beta} \EE[X_{\alpha}^2X_{\beta}^2]\\
&= \frac{3}{D_0^2}\sum_{\alpha,\beta} \EE[X_{\alpha}^2X_{\beta}^2].
\end{align*}
Here we used that
\[
\EE[\theta_{\alpha_1}\theta_{\alpha_2}\theta_{\alpha_3}\theta_{\alpha_4}] \neq 0
\]
if and only if all four indices are equal (giving the first term), or if they form two distinct pairs (three such pairings, giving the second term).
Now, for $\alpha,\beta \in \cS_{d,m}$,
\[
\EE\bigl[X_{\alpha}^2X_{\beta}^2\bigr]
= \EE\!\left(\prod_{i\in \alpha \cap \beta}X_i^4
              \prod_{i\in \alpha \cup \beta \setminus(\alpha \cap \beta)}X_i^2\right)
\leq 9^{|\alpha\cap\beta|},
\]
using that for any centered log-concave random variable $X$,
\[
\EE X^4 \leq 9\,\EE X^2,
\]
see e.g. \cite[Theorem 1.4]{yam}.
Thus
\begin{align*}
    \frac{3}{D_0^2}\sum_{\alpha,\beta} \EE[X_{\alpha}^2X_{\beta}^2]
    &\leq \frac{3}{D_0^2}\sum_{\alpha,\beta} 9^{|\alpha\cap\beta|} \\
    &= 3\,\EE 9^{|\alpha\cap\beta|} \\
    &= 3\,\EE 9^{|\alpha\cap \{1,\ldots,m\}|},
\end{align*}
where in the last line we denote by $\alpha$ and $\beta$ two independent uniform random subsets of $\{1,\ldots,d\}$ of size $m$, and used invariance under any bijection. The random variable $|\alpha\cap \{1,\ldots,m\}|$ follows a hypergeometric distribution:
\[
|\alpha\cap\{1,\ldots,m\}| \sim \mathrm{Hypergeometric}(d,m,m).
\]
It is well known that $\mathrm{Hypergeometric}(N,K,n)$ is stochastically dominated by $\mathrm{Binomial}(n,K/N)$ (and Hoeffding \cite{hoeffding1963probability} even proved that the same domination also holds in the convex order). Thus, for any increasing or convex $f$,
\[
\EE f\bigl(|\alpha\cap[1,m]|\bigr) \leq \EE f(Z),
\]
where $Z\sim \mathrm{Binomial}(m,p=m/d)$. In particular,
\begin{align*}
    \EE 9^{|\alpha\cap[1,m]|}
    &\leq \EE 9^Z
    = (1+8p)^m \\
    &= \left(1+\frac{8m}{d}\right)^m\\
    &\leq \exp(8m^2/d)
    \;\leq\; 1 + C_0 m^2/d,
\end{align*}
where one may take $C_0 = e^{8}-1$. This concludes the proof of Lemma \ref{lem_780}.
\end{proof}
We have established that, on average, the $4$-th moment of the random multilinear polynomials is bounded. We now need an argument to show that their $L^2$ norm does not decay too quickly along the Langevin semigroup. We will use Lemma~\ref{lem309} from Section~\ref{sec_logconcave}, which states that for any
$f\in L^2(\mu)$ with square integrable gradient,
\begin{equation}\label{eq2070}
    \norm{T_t^\mu f}_{L^2(\mu)}^2 \;\geq\; \norm{f}_{L^2(\mu)}^2 - 2t\int \norm{\nabla f}^2\,d\mu.
\end{equation}
As before we denote the multilinear polynomials by
\[
X_\alpha = \prod_{i\in\alpha}X_i.
\]
Clearly, for any $1\leq i \leq d$
\[
\partial_i X_\alpha =
\begin{cases}
0, & \text{if } i \notin \alpha, \\
X_{\alpha \setminus \{i\}}, & \text{otherwise.}
\end{cases}
\]
In particular, since $\mu$ is a product measure, for a fixed $i$, the family $(\partial_i X_\alpha)$ is orthonormal in $L^2(\mu)$. For any $\theta\in\RR^{D_0}$,
\begin{align*}
    \int \norm{\nabla P_\theta}^2 d\mu
    &= \int \sum_{i=1}^d (\partial_{i}P_\theta)^2 d\mu\\
    &= \sum_{i=1}^d \sum_{\substack{\alpha\subset\{1,\ldots,d\}\\ \abs{\alpha}=m}} \int \theta_\alpha^2(\partial_i X_\alpha)^2 d\mu \\
    &= \sum_{\substack{\alpha\subset\{1,\ldots,d\}\\ \abs{\alpha}=m}} \sum_{i=1}^d \theta_\alpha^2\one_{i\in \alpha} \\
    &= m\norm{\theta}_2^2 \numberthis\label{eq1806}
\end{align*}

We are now in a position to prove the lower bound of Theorem \ref{thm_644}. Let $N$ be an integer to be chosen later, and let $\theta_1,\dots,\theta_N$ be i.i.d random vectors with distribution
\[
\theta_i \sim \cN\!\left(0,\tfrac{1}{D_0}I_{D_0}\right).
\]
Let $P_{\theta_1},\dots,P_{\theta_N}$ be the corresponding polynomials defined by \eqref{eq775}. For any $1\leq i,j \leq N$ we have
\[
\normmu{P_i} = \norm{\theta_i}_2
\quad\text{and}\quad
\normmu{P_i-P_j} = \norm{\theta_i-\theta_j}_2.
\]
Furthermore, for any $i\neq j$, the random vector $\theta_i -\theta_j$ is again Gaussian with covariance $\tfrac{2}{{D_0}}I_{D_0}$. By Gaussian concentration for Lipschitz functions and a union bound, we obtain
\begin{align*}
    \PP\!\left(\exists\, 1\leq i\neq j \leq N : \ \norm{\theta_i-\theta_j}_2 \leq 1\right)
    &\leq N^2\, \PP\!\left(\norm{\tfrac{\sqrt{2}}{\sqrt{{D_0}}}G}_2 \leq 1\right) \\
    &\leq N^2\,\PP\!\left(\norm{G} \leq \EE\norm{G} - \tfrac{\sqrt{{D_0}}}{4}\right) \\
    &\leq N^2 e^{-{D_0}/32},
\end{align*}
where $G\sim\cN(0,I_{D_0})$, and where we used that
\[
\sqrt{{D_0}}-1 \;\leq\; \EE\norm{G}_2.
\]
We also have the tail bound
\[
\PP\!\left(\exists\, 1\leq i\leq N : \ \norm{\theta_i}^2_2 \geq 2\right) \;\leq\; N e^{-{D_0}/4}.
\]
We choose $N = e^{{D_0}/128}$, and define the events
\[
\cA = \{ (P_{\theta_1},\dots,P_{\theta_N}) \ \text{is a $1$-separated set in } L^2(\mu) \},
\]
and
\[
\cB = \{ \norm{\theta_i}^2_2\leq 2 \qquad \forall 1\leq i\leq N \}.
\]
From the two previous inequalities we deduce that
\[
\PP(\cA\cap \cB) \geq 1 - 2e^{-{D_0}/64} \;\geq\; \tfrac{3}{5},
\]
say. On the other hand, by Lemma \ref{lem_780} and Markov's inequality, we have that
\[
p= \PP\bigl(\EE_X P_\theta^4(X) \leq 2C_0\bigr) \geq 1/2,
\]
where $C_0$ is the constant from Lemma \ref{lem_780}. Thus, roughly half of the $(P_{\theta_i})_{1\leq i \leq N}$ will enjoy a nice bound on their fourth moment. That is, define
\[
N_1 = \#\{1\leq i \leq N : \ \EE_X P_{\theta_i}^4(X) \leq 2C_0\} \sim \mathrm{Binomial}(N,p).
\]
The median of a Binomial with parameters $(N,p)$ is greater than $\lfloor Np \rfloor$. Thus, with probability $1/2$, we have
\[
N_1 \geq \lfloor pN\rfloor \geq N/3.
\]
The event
\[
\cD = \cA\cap\cB\cap\{N_1 \geq N/3\}
\]
has positive probability, greater than $0.1$. For such a realization we find polynomials
\[
(P_i)_{1\leq i \leq N_1}
\]
that form a $1$-separated set of $B_{L^2(\mu)}(0,\sqrt 2)$ of cardinality
\[
N_1\geq N/3 = e^{{D_0}/128}/3 \geq e^{{D_0}/256},
\]
and satisfy
\begin{equation}\label{eq1862}
    \EE_X P_i^4(X) \leq 2C_0 \quad \forall 1\leq i \leq N_1,
\end{equation}
\begin{equation}\label{eq1865}
    \normmu{P_i}^2 = \norm{\theta_i}_2^2 \leq 2,
\end{equation}
\begin{equation}\label{eq1869}
    \normmu{\nabla P_i}^2 = m\norm{\theta_i}_2^2 \leq 2m.
\end{equation}
As described above, we set
\begin{equation}\label{eq854}
    f_i = T_{t}(P_i|_\lambda)
\end{equation}
with
\[
t = \tfrac{1}{32m},
\]
and $\lambda$ to be chosen later. Then $f_i$ is Lipschitz with constant
\begin{equation}\label{eq1879}
    \frac{\lambda}{\sqrt{t}}= 4\sqrt{2}\lambda \sqrt{m}.
\end{equation}
Secondly, $T_t$ is a contraction in $L^2(\mu)$, so
\[
\normmu{f_i} \leq \normmu{P_i|_\lambda} \leq \normmu{P_i}\leq 2.
\]

Let us verify that $(f_i)_{1\leq i\leq N_1}$ is separated. Let $i\neq j$, using the triangle inequality, \eqref{eq2070}, \eqref{eq1806} and \eqref{eq1869}, we get
\begin{align*}
    \normmu{f_i-f_j} &= \normmu{T_t(P_i|_\lambda - P_j|_\lambda)}\\
    & \geq \normmu{T_t(P_i-P_j)} - \normmu{T_t(P_i|_\lambda - P_j|_\lambda - P_i + P_j)} \\
    &\geq \left(\normmu{P_i-P_j}^2 - 2t\int \norm{\nabla\left(P_i-P_j\right)}^2d\mu \right)^{1/2} - \normmu{P_i - P_i|_\lambda} -\normmu{P_j - P_j|_\lambda} \\
    &\geq \left(1 - 4t\bigl(\normmu{\nabla P_i}^2 + \normmu{\nabla P_j}^2\bigr)  \right)^{1/2} - \normmu{P_i- P_i|_\lambda} -\normmu{P_j- P_j|_\lambda} \\
    &\geq \left(1 - 16tm \right)^{1/2} - \normmu{P_i- P_i|_\lambda} -\normmu{P_j- P_j|_\lambda}\\
    &\geq \tfrac{1}{\sqrt2} - \normmu{P_i- P_i|_\lambda} -\normmu{P_j- P_j|_\lambda}. \numberthis \label{eq1223}
\end{align*}
It remains to upper-bound
$$\normmu{P_i- P_i|_\lambda}\qquad \forall 1\leq i \leq N_1.$$
We fix some $1\leq i\leq N$, let
$$\cE = \{\abs{P_i}\geq \lambda\}.$$
Using Markov's inequality and \eqref{eq1865},
\begin{equation}\label{eq1894}
    \PP(\cE) \leq \frac{\normmu{P_i}^2}{\lambda^2} \leq \frac{2}{\lambda^2}.
\end{equation}
Using Cauchy-Schwarz and \eqref{eq1862},
\begin{align*}
    \normmu{P_i- P_i|_\lambda}&=\normmu{P_i\ind_\cE}\\
    &\leq \norm{P_i}_{L_4(\mu)}\norm{\ind_A}_{L_4(\mu)} \\
    &\leq \left(\frac{2C_0}{\lambda^2}\right)^{1/4}.
\end{align*}
We choose
$$\lambda = 16\sqrt{2}\sqrt{C_0}$$
and we find that for all $1\leq i \leq N_1$,
$$\normmu{P_i- P_i|_\lambda} \leq \frac{1}{4}.$$
Plugging this back into \eqref{eq1223}, we arrive at
\begin{equation}
    \normmu{f_i-f_j} \geq \frac{1}{\sqrt{2}}-\frac{1}{2} \geq \frac{1}{5}
\end{equation}
for all $1\leq i\neq j \leq N_1$.
Setting
$$\Tilde{f_i} = \frac{1}{4\sqrt{2}\sqrt{m}\lambda}f_i = \frac{1}{128\sqrt{C_0}\sqrt{m}}f_i$$
We have constructed a family of $1$-Lipschitz functions which is $\frac{\tilde{C}}{\sqrt{m}}$-separated and has cardinality
$$N_1 \geq e^{D_0/256}$$
where
$$D_0 = \binom{d}{m}.$$
In other words, for a given $\varepsilon>0$, setting
$$m = \lfloor \frac{\tilde{C}^2}{\varepsilon^2} \rfloor$$
we constructed an $\varepsilon$-separated set of cardinality of cardinality $N_1$ with
$$\log N_1 \gtrsim \binom{d}{m} \geq \binom{d}{\frac{c}{\varepsilon^2}}$$
for some constant $c>0$, which is what we wanted to prove.

\subsubsection{The general case}

We explain how to deduce the general isotropic case from the case of the product measure. It is well-known that lower-dimensional marginals of an isotropic log-concave probability measure are approximately Gaussian. The following precise statement was proved in \cite{EK_pointwise}:

\begin{theorem}\label{thm_pointwise} Let $\mu$ be an isotropic log-concave probability measure in $\RR^d$. Then there exists a subspace $E \subseteq \RR^d$ of dimension $k\geq d^{\eta_0}$ such that
\begin{equation}\label{eq2246}
    \abs{p_E(x)-q_{\gamma_E}(x)}\leq \frac{C}{k}q_{\gamma_E}(x) \qqquad \text{for all }\abs{x}\leq k
\end{equation}
where $C,\eta_0$ are universal constants, $p_E$ is the density of the marginal $\mu_E$ of $\mu$ on $E$ and $q_{\gamma_E}(x)
= (2 \pi)^{-k/2} e^{-|x|^2/2}$  is the density of a standard Gaussian on $E$, which we denote by $\gamma_E$.
\end{theorem}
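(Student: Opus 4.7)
The plan is to construct the subspace $E$ by a probabilistic selection on the Grassmannian $G_{d,k}$ of $k$-dimensional subspaces and to establish the pointwise bound by combining Fourier inversion with thin-shell concentration for isotropic log-concave measures. The starting point is the identity
\[
p_E(x) - q_{\gamma_E}(x) \;=\; \frac{1}{(2\pi)^k}\int_E \bigl(\phi_\mu(\xi) - e^{-|\xi|^2/2}\bigr)\, e^{-i\xi\cdot x}\,d\xi,
\]
where $\phi_\mu(\xi) := \int_{\RR^d} e^{i\xi\cdot y}\,d\mu(y)$ is the characteristic function of $\mu$ and the integrand lives on $E$ through restriction. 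The task thus reduces to controlling $\phi_\mu - e^{-|\cdot|^2/2}$ on a sufficiently large portion of a well-chosen subspace $E$.

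First I would exploit isotropicity: averaging $\phi_\mu(U\xi)$ over rotations $U \in SO(d)$ yields a purely radial quantity determined by the law of $|X|$ for $X\sim\mu$. By known thin-shell estimates for isotropic log-concave measures, which provide polynomial $d^{-\eta_1}$-control on the fluctuation of $|X|$ around $\sqrt{d}$, this radial average lies within $O(d^{-\eta_1/2})$ of $e^{-|\xi|^2/2}$ for $|\xi|$ of moderate size. Concentration of Lipschitz functions on the compact group $SO(d)$, at scale $1/\sqrt{d}$, then upgrades this averaged bound into a uniform-in-$E$ statement: for $E \in G_{d,k}$ chosen uniformly with $k \simeq d^{\eta_0}$ and $\eta_0$ small enough, $\phi_\mu$ is close to $e^{-|\xi|^2/2}$ uniformly on a ball in $E$ of moderate radius, with probability tending to one. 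A standard net argument on this ball, together with the Lipschitz regularity of $\phi_\mu$ inherited from the finite second moment of $\mu$, reduces the uniform statement to pointwise estimates at finitely many frequencies and enables fixing a single good subspace $E$.

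Next, to promote the resulting additive error into the required \emph{multiplicative} form $|p_E - q_{\gamma_E}| \leq (C/k)\,q_{\gamma_E}$, I would shift the Fourier inversion contour into the strip of holomorphy of $\phi_\mu$. Log-concavity of $\mu$ forces exponential tails, so $\phi_\mu$ extends holomorphically to a neighbourhood of $\RR^d$ in $\CC^d$. Shifting the inversion contour from $E$ to $E + i\,t(x)$ for a saddle-point choice of $t(x)$ produces the factor $q_{\gamma_E}(x)$ outside the integral, leaving a residual integrand controlled by $O(1/k)$ via the Fourier estimate of the previous step. The admissible range $|x| \leq k$ in the theorem is precisely the regime in which this contour shift stays within the analyticity strip and the residual integrand retains its $O(1/k)$ decay; beyond this range the saddle-point approximation degrades and the method no longer yields a multiplicative bound.

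The main obstacle is reconciling all three quantitative ingredients so as to obtain the sharp rate $C/k$: (i) the strongest available thin-shell estimate, which dictates the exponent $\eta_0$ and hence the dimension $k \geq d^{\eta_0}$; (ii) tight control of concentration constants on $SO(d)$ so that the union bound over the net in $E$ does not introduce losses larger than $1/k$; and (iii) careful matching of the contour shift with the tail decay of $\mu$ so that the multiplicative prefactor in front of $q_{\gamma_E}(x)$ is genuinely of order $1/k$ rather than a fixed constant. Any weakness in any one of these ingredients degrades the rate, which is why the best current $\eta_0$ is modest and improving it towards $\eta_0 \to 1$ remains tied to the thin-shell and KLS conjectures.
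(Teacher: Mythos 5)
First, a point of orientation: the paper does not prove this statement at all — it is quoted verbatim as an external result of Eldan and Klartag \cite{EK_pointwise}, so there is no internal proof to compare against. Your sketch does follow the general strategy of that work (Fourier inversion for the marginal density, thin-shell concentration for $|X|$, and concentration of measure on $SO(d)$/the Grassmannian to pass from a rotational average to a typical subspace), so the first half of your plan is pointed in the right direction, though it is only a plan and each step there already requires substantial quantitative work (e.g.\ high-frequency decay of $\phi_{\mu_E}$ so that the inversion integral converges and the net argument closes).

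The genuine gap is in the step that is supposed to produce the \emph{multiplicative} bound. You propose shifting the inversion contour to $E + i\,t(x)$ with a saddle-point choice $t(x)$ so as to extract the factor $q_{\gamma_E}(x)$, and you assert that the range $|x|\leq k$ is exactly where the shift stays inside the strip of holomorphy of $\phi_\mu$. This is not so: an isotropic log-concave measure (and likewise its marginal on $E$) has only exponential tails, of the form $e^{-c|x|}$ with $c$ of constant order, so $\phi_\mu$ extends holomorphically only to a strip of \emph{constant} width around $\RR^d$, independent of $k$. A shift of size $|t(x)|\simeq |x|$ up to $|x|=k$ therefore leaves the domain of analyticity almost immediately, and the saddle-point factor $e^{-|x|^2/2}$ cannot be generated this way. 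Relatedly, the additive control you obtain in the first step is at best polynomially small (of order $d^{-c}$), while at $|x|\simeq k$ the target $q_{\gamma_E}(x)\simeq (2\pi)^{-k/2}e^{-k^2/2}$ is super-exponentially small; no additive $L^\infty$ bound of that quality can be converted into the relative error $C/k$ on the whole ball $|x|\leq k$. Closing this is precisely the hard part of \cite{EK_pointwise}, where the multiplicative estimate at large $|x|$ is obtained by a much more delicate analysis (exploiting, among other things, the log-concavity of the marginal density itself and sharp large-deviation information), not by a contour shift of the inversion integral. As written, your argument establishes (modulo details) only an additive approximation on a ball of moderate radius, which is strictly weaker than the statement of the theorem.
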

The estimate \eqref{eq2246} implies that $p_E$ is very close to $\gamma_E$ on a ball of radius $k$, while most of the mass of $\mu_E$, or $\gamma_E$, is concentrated in a ball of radius only $\simeq \sqrt{k}$. This implies in particular that the $L^2$ norm of a Lipschitz function does not change much when swithcing from $\mu_E$ to $\gamma_E$. Indeed, let $g$ be a $1$-Lipschitz function. Then,
\begin{align*}
    \int_{E} g^2 d\mu &\geq \int_{\abs{x}\leq k} g^2 d\mu\\
    & = \int g^2 d\gamma_E -\int_{\abs{x}\leq k} g^2 (d\gamma_E-d\mu_E) - \int_{\abs{x}>k} g^2 d\gamma_E\\
    &\geq \int g^2 d\gamma_E - \frac{C}{k}\int g^2 d\gamma_E - \left(\int g^4 d\gamma_E\right)^{1/2}\PP\left(\abs{G_E}\geq k\right)^{1/2}\\
    &\geq \int g^2 d\gamma_E\left(1-\frac{C}{k}\right) -C_1e^{-k}\numberthis \label{eq2257}
\end{align*}
where $C_1$ is a universal constant. In the last line we have used concentration of the norm of a standard $k$-dimensional Gaussian:
$$\gamma_k\left(\abs{x}\geq \sqrt{k}+t\right) \leq e^{-t^2/2}.$$
and that for all $1$-Lipschitz functions $g$,
$$ \left(\int g^4d\gamma \right)^{1/2}\leq \int g^2d\gamma + \tilde{C_1}$$
for some constant $\tilde{C_1}$.

\medskip
Let $1>\varepsilon>\frac{1}{k^{1/4}}$. By Theorem \ref{thm_644}, we can find $1$-Lipschitz functions $f_1,\ldots,f_N$ such that for all $i\neq j$
$$\| f_i-f_j \|_{L^2(\gamma_E)}\geq \varepsilon$$
and
$$\log N \gtrsim k^{\frac{c}{\varepsilon^2}}\gtrsim d^{\frac{c'}{\varepsilon^2}}.$$
We now apply \eqref{eq2257} to the $1$-Lipschitz function $g=\tfrac{1}{2}(f_i-f_j)$. For large enough $d$, we get
\begin{align*}
    \| f_i-f_j \|_{L^2(\mu_E)}^2&\geq \| f_i-f_j \|_{L^2(\gamma_E)}^2\left(1-\frac{C}{k}\right) - 4C_1e^{-k}\\
    &\geq \varepsilon^2\left(1-\frac{C}{k}\right) - 4C_1e^{-k}\\
    &\geq \frac{\varepsilon^2}{4}
\end{align*}
where we assume that $d$ is large enough so that $\frac{C}{k}\leq \frac{1}{2}$ and $4C_1e^{-k}\leq\frac{1}{4\sqrt{k}}\leq \frac{\varepsilon^2}{4}$. Thus the functions $f_1,\ldots,f_N$ are $\varepsilon/2$ separated in $L^2(\mu_E)$. Equivalently, the functions $f_1\circ\Pi_E,\ldots,f_N\circ\Pi_E$ are $\varepsilon/2$ separated in $L^2(\mu)$, where $\Pi_E: \RR^d \rightarrow E$ is the orthogonal projection operator. Thus for all $\varepsilon>d^{-\eta_0/4}$,
$$H_L^\mu(\varepsilon) \geq \log N\gtrsim d^{\frac{c'}{\varepsilon^2}}.$$
This proves the general case, with $\eta = \eta_0/4.$

\subsection{A Minimax lower bound for learning Lipschitz functions}
We now go back to the learning problem
\begin{equation}
    Y_i = f(X_i) + \sigma Z_i \quad i=1,\ldots,n.
\end{equation}
and we prove the minimax lower bound announced in the Introduction, Corollary~\ref{corollary_fano}, which we restate below for the reader’s convenience.

\begin{corollary*}[Corollary \ref{corollary_fano}]
     Let $\mu$ be an isotropic log-concave measure on $\RR^d$. Assume that the noise satisfies
     \[
         n^{-\kappa} \leq \sigma^2 \leq n
     \]
     for some constant $\kappa>0$. There exists a universal constant $c>0$ such that if
     \[
         n \leq e^{\frac{cd^{2\eta}\log d}{\kappa}},
     \]
     the minimax risk is lower bounded as
     \begin{equation}\label{eq2302}
         \cR^*_{n,d}\gtrsim (1+\kappa)\frac{\log n}{\log d}.
     \end{equation}
      Moreover, if additionally $\mu$ is a product measure, then the lower bound \eqref{eq2302} holds in the range
     \[
         n \leq e^{\frac{c\sqrt{d}\log d}{\kappa}}.
     \]
\end{corollary*}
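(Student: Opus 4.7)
The plan is to apply Fano's information-theoretic method with Theorem~\ref{thm_entropyLip} as the sole probabilistic input. The scheme is the standard packing-and-testing template: a rich $\varepsilon$-packing of $1$-Lipschitz hypotheses forces any estimator to incur $L^2(\mu)$-risk of order $\varepsilon^{2}$, provided the KL divergence between observation laws under different hypotheses stays within the log-cardinality budget of the packing.

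First I would fix a scale $\varepsilon\in(d^{-\eta},1)$ and, via Theorem~\ref{thm_entropyLip}, select an $\varepsilon$-separated family $f_1,\dots,f_N\in B_{Lip}^{\mu}$ with $\log N\gtrsim d^{\,c/\varepsilon^{2}}$. Since $(X_i,Y_i)_{i=1}^n$ are i.i.d.\ with $Y_i=f(X_i)+\xi_i$ and $\xi_i\sim\cN(0,\sigma^{2})$, tensorization of KL for Gaussian channels yields
\[
KL\!\left(P_{f_i}^{\otimes n},\,P_{f_j}^{\otimes n}\right)\;=\;\frac{n}{2\sigma^{2}}\,\|f_i-f_j\|_{L^{2}(\mu)}^{2}\;\leq\;\frac{2n}{\sigma^{2}},
\]
since $\|f_i\|_{L^{2}(\mu)}\leq 1$ forces $\|f_i-f_j\|_{L^{2}(\mu)}\leq 2$. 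Fano's inequality in its standard form (see \cite{wainwright2019high}) then gives $\cR^{*}_{n,d}\gtrsim \varepsilon^{2}$ as soon as $\tfrac{2n}{\sigma^{2}}$ is a fixed fraction of $\log N$.

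The next step is to optimize over $\varepsilon$. Plugging the entropy bound in, the sufficient condition becomes $c\log d/\varepsilon^{2}\gtrsim \log(n/\sigma^{2})$, and the noise assumption $\sigma^{2}\geq n^{-\kappa}$ implies $\log(n/\sigma^{2})\leq(1+\kappa)\log n$. The largest admissible $\varepsilon$ is therefore
\[
\varepsilon^{2}\;\simeq\;\frac{\log d}{(1+\kappa)\log n},
\]
which delivers the advertised minimax lower bound in the natural form matching the upper bounds \eqref{eq480}. The main obstacle is purely bookkeeping: one must check that this choice of $\varepsilon$ lies above the admissibility threshold $d^{-\eta}$ (respectively $d^{-1/4}$ in the product case), and this translates precisely to $\log n\lesssim d^{2\eta}\log d/(1+\kappa)$ (respectively $\log n\lesssim \sqrt{d}\log d/(1+\kappa)$), i.e., the stated ranges of validity $n\leq\exp(cd^{2\eta}\log d/\kappa)$ and $n\leq\exp(c\sqrt{d}\log d/\kappa)$ up to universal constants. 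Beyond this calibration the argument is a textbook application of Fano's method, with all the genuinely new work already done inside Theorem~\ref{thm_entropyLip}.
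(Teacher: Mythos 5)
Your proposal is correct and takes essentially the same route as the paper: both apply Fano's method with the entropy bound of Theorem~\ref{thm_entropyLip}, the Gaussian KL identity $\mathrm{KL}=\frac{n}{2\sigma^2}\|f_i-f_j\|_{L^2(\mu)}^2$, the calibration $\varepsilon^2\simeq\frac{\log d}{(1+\kappa)\log n}$ via $\sigma^2\geq n^{-\kappa}$, and the same admissibility check $\varepsilon\geq d^{-\eta}$ (resp.\ $d^{-1/4}$) yielding the stated ranges of $n$. The only cosmetic difference is that you invoke classical Fano with pairwise KL bounds over the packing, whereas the paper uses the Yang--Barron version with the KL radius of the class around $f=0$; both reduce to the identical condition $d^{c/\varepsilon^2}\gtrsim n/\sigma^2$.
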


A typical way of establishing lower bounds for a learning problem is to reduce it to a multiple hypothesis testing problem and apply information-theoretic methods. This is known as Fano’s method. More precisely, we shall use the Yang–Barron version, which requires computing entropy estimates in the Kullback–Leibler divergence for the collection of random variables
\[
\mathcal{D}_n = \{((X_1,Y_1),\ldots,(X_n,Y_n)) : f\in B_2(\mathrm{Lip},0,1)\}
= \{(X,Y_f) : f\in B_2(\mathrm{Lip},0,1)\},
\]
namely, the collection of all possible random vectors that we may observe, indexed by our function class, the $1$-Lipschitz functions with bounded $L^2(\mu)$ norm. Let $P$ and $Q$ be two probability measures on $\RR^d$ such that $P$ is absolutely continuous with respect to $Q$. The Kullback–Leibler divergence from $P$ to $Q$, denoted by $\DKL(P \,\|\, Q)$, is defined as
\[
\DKL(P \,\|\, Q) := \int_{\RR^d} \log\!\left( \frac{dP}{dQ} \right) \, dP,
\]
where $\frac{dP}{dQ}$ denotes the Radon–Nikodym derivative of $P$ with respect to $Q$.
For $\varepsilon>0$, let
\[
\tilde{\cN}(\cD_n, \varepsilon, \DKL)
\]
be the minimal size of an $\varepsilon$-net of $\cD_n$ with respect to $\DKL$, and set
\[
\tilde{\mathrm{H}}(\cD_n, \varepsilon, \DKL) := \log \tilde{\cN}(\cD_n, \varepsilon, \DKL).
\]
Here the entropy is defined via covering numbers; we use a tilde to emphasize the distinction from the earlier convention adopted for $H_L$, which was based on packing numbers.

\medskip
The Yang–Barron method is summarized in the next lemma; see \cite{wainwright2019high} for background.

\begin{lemma}
    Let $\varepsilon>0$ be such that
    \begin{equation}\label{eq1251}
        \varepsilon^2 \geq \tilde{\mathrm{H}}(\cD_n, \varepsilon, \DKL),
    \end{equation}
    and $\delta>0$ be such that
    \begin{equation}\label{eq1255}
        H_L(\delta) \geq 4\varepsilon^2 + 2\log 2.
    \end{equation}
    Then, the minimax risk using $n$ samples is lower bounded as
    \begin{equation}
        \inf_{\hat{f}}\sup_{f\in B_2(\mathrm{Lip},0,1)}\EE\norm{f-\hat{f}}_{L^2(\mu)}^2 \gtrsim \delta^2.
    \end{equation}
\end{lemma}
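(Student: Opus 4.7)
The plan is to follow the classical reduction of minimax estimation to a multiple hypothesis testing problem via Fano's inequality, combined with the Yang--Barron trick that upper bounds the mutual information through a KL covering.

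First I would unpack the two hypotheses. The inequality $H_L(\delta)\ge 4\varepsilon^2+2\log 2$ furnishes a $\delta$-packing $\{f_1,\ldots,f_M\}\subseteq B_{\mathrm{Lip}}$ with $\log M \ge 4\varepsilon^2+2\log 2$. I put a uniform prior $J$ on $\{1,\ldots,M\}$ and write $P_j$ for the joint law of the data $(X_i,Y_i)_{i=1}^n$ when the regression function equals $f_j$. For any estimator $\hat f$, define the test $\hat\psi := \arg\min_j\normmu{\hat f-f_j}$. By the triangle inequality and the $\delta$-separation of the packing, on the event $\{\hat\psi\neq J\}$ one has $\normmu{\hat f-f_J}\ge \delta/2$, and hence
\[
\EE\normmu{\hat f-f_J}^2 \;\ge\; \tfrac{\delta^2}{4}\,\PP(\hat\psi\neq J).
\]
It therefore suffices to establish $\PP(\hat\psi\neq J)\ge \tfrac12$ for every estimator.

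Next I would apply Fano's inequality to the Markov chain $J\to (X^n,Y^n)\to \hat\psi$:
\[
\PP(\hat\psi\neq J) \;\ge\; 1-\frac{I(J;(X^n,Y^n))+\log 2}{\log M}.
\]
To bound the mutual information I would invoke the Yang--Barron argument. Let $\{Q_1,\ldots,Q_N\}$ be a minimal KL-cover of $\cD_n$, so that $\log N=\tilde{\mathrm{H}}(\cD_n,\varepsilon,\DKL)\le \varepsilon^2$ and, for each $j$, there exists $k(j)$ with $\DKL(P_j\,\|\,Q_{k(j)})\le \varepsilon^2$. Form the uniform mixture $\bar Q=\tfrac{1}{N}\sum_k Q_k$. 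Since $\bar Q\ge Q_k/N$ as measures, we have $\log(dP_j/d\bar Q)\le \log N+\log(dP_j/dQ_{k(j)})$ pointwise, which upon integration yields
\[
\DKL(P_j\,\|\,\bar Q) \;\le\; \log N+\DKL(P_j\,\|\,Q_{k(j)}) \;\le\; 2\varepsilon^2.
\]
Combining with the variational identity $I(J;X)=\inf_Q\EE_J\DKL(P_J\,\|\,Q)$, specialized to $Q=\bar Q$, gives $I(J;(X^n,Y^n))\le 2\varepsilon^2$.

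Plugging this back into Fano produces
\[
\PP(\hat\psi\neq J) \;\ge\; 1-\frac{2\varepsilon^2+\log 2}{4\varepsilon^2+2\log 2} \;=\; \tfrac12,
\]
so that $\EE\normmu{\hat f-f_J}^2\ge \delta^2/8$. Since the supremum of the risk over $f\in B_{\mathrm{Lip}}$ dominates the average over the uniform prior on $J$, the conclusion $\cR^*_{n,d}\gtrsim \delta^2$ follows. There is no genuine obstacle here, as this is a standard template; the one piece that deserves care is the mixture inequality underlying Yang--Barron and, relatedly, fixing the convention so that the ``radius'' $\varepsilon$ in $\tilde{\mathrm{H}}(\cD_n,\varepsilon,\DKL)$ corresponds to a KL-ball of ``mass'' $\varepsilon^2$. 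Under that convention the assumption $\varepsilon^2\ge \tilde{\mathrm{H}}$ exactly matches the mutual-information budget $2\varepsilon^2$ that Fano can afford against the packing of size $e^{4\varepsilon^2+2\log 2}$.
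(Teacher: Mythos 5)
Your proof is correct and is exactly the standard Fano--Yang--Barron argument (packing plus testing reduction, Fano's inequality, and the mixture bound $\DKL(P_j\,\|\,\bar Q)\le \log N+\varepsilon^2$) that the paper relies on by citing Wainwright's book rather than proving the lemma itself. The convention you flag—that an $\varepsilon$-net in $\DKL$ means KL-balls of ``mass'' $\varepsilon^2$—is indeed the one the paper uses later, where it sets $\varepsilon^2=\tfrac{n}{2\sigma^2}$ equal to the KL radius of $\cD_n$ so that a singleton cover verifies the first hypothesis, so your argument matches the intended reading.
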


\begin{proof}[Proof of Corollary~\ref{corollary_fano}]
    We first compute the metric entropy of $\cD_n$ equipped with the Kullback--Leibler divergence. Let $f_1$ and $f_2$ be two Lipschitz functions. For $k=1,2$, the vector $Y_{f_k}$ decomposes as
    \[
    Y_{f_k} = f_k(X) + G_k,
    \]
    where $f_k(X) = (f_k(X_1),\ldots,f_k(X_n))$ and $G_k\sim\cN(0,\sigma^2I_n)$ are independent Gaussian vectors. Conditioning on $X$, $Y_{f_1}$ and $Y_{f_2}$ are independent Gaussians with means $f_1(X)$ and $f_2(X)$, and covariance $\sigma^2I_n$. It follows that
    \begin{align*}
        \DKL\!\left((X,Y_{f_1}) \,\|\, (X,Y_{f_2})\right)
        &= \EE\!\left[\DKL\bigl(Y_{f_1}\mid X \,\|\, Y_{f_2}\mid X\bigr)\right] \\
        &= \EE\!\left(\frac{1}{2\sigma^2}\sum_{i=1}^n\bigl(f_1(X_i)-f_2(X_i)\bigr)^2\right) \\
        &= \frac{n}{2\sigma^2}\,\norm{f_1-f_2}_{L^2(\mu)}^2.
    \end{align*}
    In particular, choosing $f_1=0$, the radius of $\cD_n$ in Kullback--Leibler divergence is at most
    \[
        \frac{n}{2\sigma^2}.
    \]
    Set
    \[
        \varepsilon^2 = \frac{n}{2\sigma^2},
    \]
    which trivially ensures \eqref{eq1251}. To satisfy \eqref{eq1255}, we require
    \[
        H_L(\delta) \gtrsim \frac{n}{\sigma^2}.
    \]
    By Theorem \ref{thm_644}, provided that $\delta \ge d^{-\eta}$
    in the general case (respectively, $\delta \ge d^{-1/4}$ in the product case),  it suffices that
    \[
        d^{\,c/\delta^{2}} \ge \frac{n}{\sigma^2},
    \]
    i.e.
    \[
        \delta^{2} \lesssim \frac{\log d}{\log(n/\sigma^2)}.
    \]
    Using $\sigma^2 \ge n^{-\kappa}$, we have $\log(n/\sigma^2) \ge (1+\kappa)\log n$, so we may take
    \[
        \delta^{2} = \frac{c}{1+\kappa}\,\frac{\log d}{\log n}
    \]
    for some $c>0$. The applicability condition $\delta \ge d^{-\eta}$ (respectively $\delta \ge d^{-1/4}$) amounts to
    \[
        \delta^2 \ge d^{-2\eta}
        \quad\Longleftrightarrow\quad
        \frac{c}{1+\kappa}\,\frac{\log d}{\log n} \;\ge\; d^{-2\eta},
    \]
    (respectively $\frac{c}{1+\kappa}\,\frac{\log d}{\log n} \;\ge\; d^{-1/2}$)
    which holds whenever
        \[
        n \le \exp\!\left(\frac{c\,d^{2\eta}\,\log d}{1+\kappa}\right).
    \]
    Respectively,
    \[
        n \le \exp\!\left(\frac{c\,\sqrt{d}\,\log d}{1+\kappa}\right).
    \]
    This yields the stated bounds.
\end{proof}


\end{document}